\documentclass[review]{elsarticle}
\usepackage{hyperref}
 
\usepackage{amsmath,graphicx}
\usepackage{float}
\usepackage{mathtools}
\usepackage{amssymb,amsthm}
\usepackage{algorithm}
\usepackage[noend]{algpseudocode}
\usepackage[utf8]{inputenc}
\usepackage[english]{babel}
\usepackage{color}
\usepackage{hyperref}
\usepackage{booktabs}
\usepackage{geometry}
\usepackage{array}
\usepackage{siunitx}
\usepackage{multirow}

\newcommand{\y}{\textit{\textbf{y}}}

\newcommand{\cblue}{\textcolor{black}}

\newtheorem{remark}{\em Remark}

\newtheorem{assumption}{\em Assumption}
\newtheorem{lemma}{\em Lemma}
\newtheorem{theorem}{\em Theorem}
\newtheorem{corollary}{\em Corollary}
 
\newcommand{\appropto}{\mathrel{\vcenter{
  \offinterlineskip\halign{\hfil$##$\cr
    \propto\cr\noalign{\kern2pt}\sim\cr\noalign	{\kern-2pt}}}}}
\newcommand{\normalized}{\tilde}

\newcommand{\Ic}{\mathcal{I}}

\newcommand{\Zc}{\mathcal{Z}}
\newcommand{\Var}{\mathrm{Var}}

\newcommand{\bR}{\mathbb{R}}  
\newcommand{\bN}{\mathbb{N}} 
\newcommand{\Real}{\mathbb{R}} 
\def\x{{\bf x}}

\def\w{{\bf w}}
\def\u{{\bf u}}

\def\t{{\bf t}}

\def\A{{\bf A}}
\def\B{{\bf B}}
\def\T{{\bf T}}
\def\W{{\bf W}}

\makeatletter
\def\BState{\State\hskip-\ALG@thistlm}
\makeatother

\makeatletter
\newcommand{\skipitems}[1]{%
	\addtocounter{\@enumctr}{#1}%
}
\makeatother

\hyphenation{op-tical net-works semi-conduc-tor}

\journal{Journal of The Franklin Institute}









\bibliographystyle{elsarticle-num}

\begin{document}

\begin{frontmatter}

\title{A novel sequential method for building upper and lower bounds of moments of distributions}


\author[mymainaddress]{Solal Martin\corref{mycorrespondingauthor}}
\cortext[mycorrespondingauthor]{Corresponding author}
\ead{solal.martin@tse-fr.eu}

\author[mysecondaryaddress]{Emilie Chouzenoux}

\author[mythirdadress]{Víctor Elvira}

\address[mymainaddress]{Toulouse School of Economics, France}
\address[mysecondaryaddress]{
Universit\'e Paris-Saclay, Inria, CentraleSupélec, CVN, France}
\address[mythirdadress]{School of Mathematics, University of Edinburgh, UK}
 
\begin{abstract}
Approximating integrals is a fundamental task in probability theory and statistical inference, and their applied fields of signal processing, and Bayesian learning, as soon as expectations over probability distributions must be computed efficiently and accurately. 
When these integrals lack closed-form expressions, numerical methods must be used, from the Newton-Cotes formulas and Gaussian quadrature, to Monte Carlo and variational approximation techniques. Despite these numerous tools, few are guaranteed to preserve majoration/minoration inequalities, while this feature is fundamental in certain applications in statistics. 


In this paper, we focus on the integration problem arising in the estimation of moments of scalar unnormalized distributions. We introduce a sequential method for constructing upper and lower bounds on the sought integral. Our approach leverages the majorization-minimization framework to iteratively refine these bounds  using an envelope principle. The method has proven convergence and controlled accuracy under mild conditions. \cblue{We then generalize the method to the multi-dimensional setting, along with an effective implementation strategy based on power diagrams.} We demonstrate \cblue{the effectiveness of the proposed approach} through a detailed numerical example of the estimation of a Monte Carlo sampler variance in a Bayesian inference problem, \cblue{in one- and two-dimensional     cases}. 
\end{abstract}
 
\begin{keyword}
Integral Approximation, Statistical Optimization, Majoration-Minimization methods, Importance Sampling.
\end{keyword}

\end{frontmatter}

\newpage

\section{Introduction and Problem Statement}
\label{sec_problem}
\subsection{Introduction}

\cblue{The approximation of intractable integrals has long been} a fundamental problem in mathematical sciences, leading to extensive research and the development of various methods \cite{Robert04,liu2001monte,krylov2006approximate}. Classical approaches include Newton-Cotes formulas, such as the trapezoidal and Simpson’s rules, which approximate the integrand using polynomial interpolation over equally spaced points \cite{DavisRabinowitz1984}. While these methods are easy to implement, they can have low accuracy for highly oscillatory functions due to Runge’s phenomenon \cite{Demailly2006}. \cblue{A more advanced approach relies on Gaussian quadrature, which selects integration nodes and weights so that the rule is exact for polynomials up to the highest possible degree given a fixed number of function evaluations \cite{brass2011quadrature,elvira2020importance}.} MATLAB’s integrator \cite{MATLAB_integral}, for example, extends this idea using the Gauss-Kronrod quadrature rule, which enhances accuracy by adding additional nodes to an existing Gaussian quadrature formula. For integrals over infinite domains or with singularities, adaptive quadrature methods dynamically refine the integration mesh based on local error estimates, improving efficiency and precision \cite{KahanerMolertNash1989}.

When dealing with multi-dimensional settings, traditional quadrature methods rapidly become impractical, and Monte Carlo methods provide an alternative approach by estimating integrals through random sampling \cite{Robert04,mcbook,liu2001monte}. 
\cblue{Markov chain Monte Carlo (MCMC) methods provide another widely used approach, generating dependent samples from the target distribution through a Markov chain mechanism \cite{Robert04}.}
\cblue{Importance sampling methods, an important family of Monte Carlo algorithms, improve efficiency by favoring regions that contribute most to the integral's value \cite{Robert04,elvira2021advances}. Hybrid approaches combining sampling and quadrature, such as importance Gaussian quadrature \cite{elvira2020importance}, have also been proposed to improve efficiency.}
Another powerful approach in high-dimensional integration problems is sparse grid integration, which reduces the curse of dimensionality by combining lower-order quadrature rules adaptively \cite{NovakRitter1996}. \cblue{Quasi-Monte Carlo methods and number-theoretic constructions further improve integration accuracy by using low-discrepancy deterministic point sets, reducing variance compared to standard Monte Carlo approaches \cite{caflisch1998monte,naor2004number}.} Each of these methods has its advantages and limitations, making their selection highly dependent on the problem at hand. A more detailed review of these numerical integration techniques can be found in standard numerical analysis texts, such as the reference book \cite{EvansSwartz2000}, for instance.

\cblue{An important class of integration problems is the computation of a one-dimensional moment of a given target distribution.} Beyond that, the assessment of moment-based methods often require the evaluation of the variance of the computed estimator, to evaluate the method's efficiency. One widely used diagnostic in this context is the Effective Sample Size (ESS), which provides a quantitative measure of how well the weighted samples represent the target distribution. The ESS is relevant in importance sampling and sequential Monte Carlo methods, where weight degeneracy can significantly affect estimator quality \cite{martino2017effective,elvira2022rethinking}. Other techniques include variance reduction strategies such as control variates, stratified sampling, and resampling methods, which aim to improve estimator stability and convergence \cite{owen2000safe,elvira2019generalized}. 
\cblue{The variance of the estimator is often expressed as an integral that is as difficult as the original problem. Moreover, lower and upper bounds for the variance also involve intractable integrals.} However, most state-of-the-art integration methods come with significant approximation errors, and do not guarantee the preservation of inequalities. This paper tackles this challenge by providing an original sequential approach to compute tight bounds for moment integrals. \cblue{In contrast to standard numerical integration methods that aim at producing accurate point estimates, our objective is to construct certified upper and lower bounds that hold deterministically for a given computational budget. This distinction is particularly important in settings where reliability is critical. For instance, in importance sampling, the variance of the estimator is itself an intractable integral, and its Monte Carlo estimation is well known to be unstable and dominated by rare events (see for instance \cite{koehler2009assessment,Owen13}). As a result, point estimates of performance may lack reliable uncertainty quantification. In contrast, deterministic bounds provide explicit guarantees that can be directly used to assess estimator quality \cite{agarwal2022principled}, guide the tuning of proposal distributions \cite{bugallo2017adaptive,elvira2022optimized}, or determine the number of samples required to achieve a target level of accuracy \cite{elvira2021performance}.}


\subsection{Problem Statement}

Let $f: \mathbb{R} \mapsto \mathbb{R}$ and $\phi: \mathbb{R} \mapsto \mathbb{R}$, differentiable. We consider in this paper the following moment integral:
\begin{equation}
\label{integral_interest}
\Ic \equiv \int_{-\infty}^{+\infty} f(x) \pi(x) dx, \quad \text{with} \quad \pi(x) \equiv \exp(-\phi(x)).
\end{equation}
No normalization assumption is made on $\pi$, that is $Z \equiv \int_{-\infty}^{+\infty} \pi(x) dx$ might be different from one. For most choices of $f$ and $\phi$, $\Ic$ does not have a closed-form expression and should be approximated by a numerical method. The goal of this paper is to provide an efficient iterative strategy for computing tight lower and upper bounds of $\Ic$, that we define as
\begin{equation}
\underline{\Ic} \leq \Ic \leq \overline{\Ic}. \label{eq:boundsgene}
\end{equation}

\cblue{Part of our theoretical results in Section~\ref{sec_convergence_analysis} are made under the assumption that}
function $f$ is a monomial function, i.e., 
\begin{equation}
    (\forall x \in \mathbb{R}) \quad f(x) = x^k, \, \text{for some} \; k \in  \mathbb{N}. \label{eq:monomial}
\end{equation}
If $k=0$, the problem amounts to computing the normalization constant of $\pi$, since $\mathcal{I} = \mathcal{Z}$. \cblue{The extension of the results to more general test functions $f$ is discussed in the Remark \ref{remarkmonomial}, at the end of Section~\ref{sec_convergence_analysis}}. \cblue{The extension to the multi-dimensional setting, with $\x \in \mathbb{R}^d$, $f: \mathbb{R}^d \mapsto \mathbb{R}$ and $\pi: \mathbb{R}^d \mapsto \mathbb{R}$ will be presented in Section \ref{sec_multidim}.} Throughout the paper, we assume that the problem is well-posed, that is $f$ and $\pi$ are such that $\mathcal{I} \in \mathbb{R}$. 

\subsection{Paper Contributions and Outline}

This paper presents a new sequential method to compute the bounds $(\underline{\Ic},\overline{\Ic})$ in \eqref{eq:boundsgene}
with controlled precision. Our approach first stands on the construction of piecewise Gaussian upper and lower approximations for $\pi$, by relying on the powerful principle of majoration-minimization \cite{chouzenoux:hal-00789962,chouzenoux:hal-04250055,Sun2017}. We then design a recursive method to compute upper and lower envelopes, from the set of piecewise approximations. Our approach hence yields the construction of two sequences of upper and lower bounds for the integral of interest \eqref{integral_interest}, with increasing and controlled accuracy, and are both proved to converge to the exact integral value $\Ic$. Although integral $\mathcal{I}$ could be computed, in special cases of $\pi$, using known results such as the moments of truncated Gaussian distributions, our method applies more generally to any unnormalized distribution satisfying the assumptions detailed in Section~\ref{sec_compound}. We then specialize our method to the computation of bounds on the variance of an importance sampling estimator. Finally, we illustrate the practical effectiveness of our method, and compare it to the state-of-the-art envelope-based integration technique from \cite[Sec.5.4]{EvansSwartz2000}, in an application of statistical inference, namely a Bayesian classification problem. \cblue{The extension of the results to the multi-dimensional case is finally explored, relying on the properties of convex polyhedral cells and power diagrams.}

The outline of the paper is as follows. In Section \ref{sec_gaussbounds}, we introduce our construction of upper and lower Gaussian approximations for $\pi$, under reasonable assumptions on $\phi$. We deduce a preliminary formula of rough lower and upper bounds $\underline{\Ic}$ and $\overline{\Ic}$ for $\Ic$. Then, in Section \ref{sec_compound}, we propose an iterative algorithm which aims at producing $(\underline{\Ic}^{(n)})_{n \in \bN}$ and $(\overline{\Ic}^{(n)})_{n \in \bN}$, that are sequences of refined lower and upper bounds for the integral of interest, $\Ic$. We prove in Section \ref{sec_convergence_analysis} that both sequences converge to $\Ic$ as $n$ approaches infinity, and we provide a quantitative control of the error along iterations. \cblue{In Section \ref{sec_IS}, we formulate the problem of variance estimation for an importance sampling estimator and the specialization of our method to this case.} In Section \ref{sec_logistic}, we illustrate the performance of our method, and compare it to the state-of-the-art, in the use-case context motivated in Sec.~\ref{sec_IS}. \cblue{In Section \ref{sec_multidim}, we present the formulation and implementation of our approach for the multi-dimensional case, and further illustrate its performance on a 2D problem completing the scenario of Section~\ref{sec_logistic}.}


\section{A first construction of integral bounds}
\label{sec_gaussbounds}
\subsection{Introduction}

Let us first define the notion of tangent majorant (resp. minorant) functions \cite{chouzenoux:hal-04250055}. Let $t \in \bR$. Function $x \mapsto \overline{b}(x,t)$ is said to be a \emph{tangent minorant} to function $\pi$ at $t$ if 

\begin{equation}
(\forall x \in \bR) \quad \pi(x) \geq \underline{b}(x;t) \quad \text{and} \quad \pi(t) = \underline{b}(t;t).
\label{eq_minor}
\end{equation}
Respectively, function $x \mapsto \overline{b}(x;t)$ is said to be a \emph{tangent majorant} to function $\pi$ at $t$ if the following conditions hold:
\begin{equation}
(\forall x \in \bR) \quad \pi(x) \leq \overline{b}(x;t) \quad \text{and} \quad \pi(t) = \overline{b}(t;t).
\label{eq_major}
\end{equation}
In this section, we present how to construct tangent minorant (resp. majorant) functions for $\pi$, under structural assumptions on $\phi$. The approximations take the form of unnormalized Gaussian densities. From these, we deduce a simple strategy to derive rough lower and upper bounds for integral $\Ic$ in \eqref{integral_interest}, when $f$ is the monomial given by \eqref{eq:monomial}.

In the following, we will make use of the following shorter notation, for the Gaussian probability density function (pdf):
\begin{equation}
(\forall x \in \bR) \quad g(x;\mu,\sigma) \equiv \frac{1}{\sqrt{2 \pi \sigma^2}}\exp\left(-(x - \mu)^2/(2 \sigma^2) \right),
\end{equation}
with mean $\mu \in \bR$ and standard deviation $\sigma > \bR^+$. 
\subsection{Gaussian tangent minorants}
In order to build a tangent minorant with Gaussian shape for $\pi$, we rely on the following assumption:

\begin{assumption}
\label{assump1}
For every $t \in \mathbb{R}$, there exists $\beta(t) > 0$ such that 
\begin{equation}
(\forall x \in \mathbb{R}) \quad \phi(x) \leq \phi(t) + \dot{\phi}(t) (x-t) + \frac{\beta(t)}{2} (x-t)^2,
\end{equation}
where $\dot{\phi}$ denotes the first order derivative of $\phi$. 
\end{assumption}

Assumption \ref{assump1} is satisfied by a wide class of differentiable functions. For instance, assume that $\phi$ is convex (i.e, $\pi$ is log-concave), twice differentiable on $\bR$ with bounded second order derivative $\ddot{\phi}$ such that $\max_{x \in \bR} \ddot{\phi}(x) = \beta_{\max} > 0$. Then, Assumption~\ref{assump1} holds with $\beta(t) = \beta_{\max}$ for every $t \in \bR$ \cite{Bauschke2017}. More generally, by the descent lemma, Assumption~\ref{assump1} is satisfied for any $L$-Lipschitz differentiable $\phi$ as soon as $\beta(t) \geq L$ for every $t \in \bR$ \cite{Bauschke2017}. Examples of functions $\phi$ fulfilling Assumption~\ref{assump1} are listed in Tab.~\ref{tab:Assumption1}. Other examples can be found for instance in \cite{Sun2017,chouzenoux:hal-00789962}. 

Let us emphasize that Assumption~\ref{assump1} is stable by summation: if $(\phi_j)_{1 \leq j \leq J}$ satisfy Assumption~\ref{assump1} for some $(\beta_j(\cdot))_{1 \leq j \leq J}$, then $\sum_{j=1}^J \phi_j$ satisfies Assumption~\ref{assump1} with $t \mapsto \beta(t) = \sum_{j=1}^J \beta_j(t)$. We will make use of this property to build our final approximation algorithm.

\begin{table}
\renewcommand*{\arraystretch}{1.5}
\centering
\begin{tabular}{|c||c|c|c|}
\hline
{Type} & $x \mapsto \phi(x)$ & $x \mapsto\dot{\phi}(x)$ & $t \mapsto \beta(t)$ \\
\hline
\hline
Quadratic & $\frac{1}{2} x^2$ & $x $ & $1$\\
\hline
Hyperbolic & $(1 + x^2/\delta^2)^{1/2}$ & $(x/\delta^2) (1 + x^2/\delta^2)^{-3/2}$ & $\delta^{-2} (1 + t^2/\delta^2)^{-3/2}$\\
\hline                           
Huber & $\begin{cases} x^2, & \text{if} \;|x|<\delta,\\ 2 \delta |x|- \delta^2, & \text{otherwise} \end{cases}$ & 
$\begin{cases} 2x, & \text{if} \;|x|<\delta,\\ 2 \delta \text{sign}(x), & \text{otherwise}  \end{cases}$ & 
$\begin{cases} 2, & \text{if} \;|t|<\delta,\\ 2 \delta |t|^{-1}, & \text{otherwise}  \end{cases}$\\
\hline                                                                
Logistic & $\log( 1 + e^x)$ & $e^x / (1 + e^x)$ & $\begin{cases} 1/4, & \text{if} \; t = 0,\\ \frac{1}{t}(\frac{1}{1 + e^{-t}}-\frac{1}{2}), & \text{elsewhere} \end{cases} $\\
\hline
Cauchy & $\log(1 + x^2/\delta^2)$ & $2 x  (x^2 + \delta^2)^{-1}$ & $2 (t^2 + \delta^2)^{-1}$\\
\hline
\end{tabular}
\caption{Examples of functions, defined on $\mathbb{R}$, satisfying Assumption \ref{assump1}. We provide the expression of $\phi$, of its derivative $\dot{\phi}$ and of the associated majorant curvature $\beta$. We assume that $\delta>0$.}
\label{tab:Assumption1}
\end{table}

\begin{lemma}
\label{lem:ass1}
Under Assumption~\ref{assump1}, the minoration conditions \eqref{eq_minor} hold with
\begin{equation}
(\forall (x,t) \in \bR^2) \quad \underline{b}(x;t) = 
\underline{C}(t) g(x; \underline{u}(t), \underline{\sigma}(t))
\end{equation}
using the following variance, mean and scale parameters:
\begin{equation}
(\forall t \in \bR) \quad
\begin{cases}
\underline{\sigma}(t) & = 1/\sqrt{\beta(t)}, \\
\underline{u}(t) & = t -\underline{\sigma}(t)^2 \dot{\phi}(t), \\
\underline{C}(t) & = \sqrt{2 \pi \underline{\sigma}(t)^2} \exp\left(- \phi(t) + (\underline{\sigma}(t)^2/2) (\dot{\phi}(t))^2\right). \\
\end{cases}
\end{equation}
\end{lemma}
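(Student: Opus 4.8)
The plan is to exponentiate the inequality in Assumption~\ref{assump1} and then complete the square. Since $\pi = \exp(-\phi)$ and $s \mapsto \exp(-s)$ is decreasing, the quadratic majorant of $\phi$ immediately gives, for every $x \in \bR$,
\[
\pi(x) = \exp(-\phi(x)) \geq \exp\!\left(-\phi(t) - \dot{\phi}(t)(x-t) - \tfrac{\beta(t)}{2}(x-t)^2\right) =: \underline{b}(x;t).
\]
The tangency identity $\pi(t) = \underline{b}(t;t)$ is then immediate, since both sides collapse to $\exp(-\phi(t))$ at $x = t$; this already establishes the minoration conditions \eqref{eq_minor} for this choice of $\underline{b}$.

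It then remains to recast $\underline{b}(\,\cdot\,;t)$ in Gaussian form. I would set $\underline{\sigma}(t) = 1/\sqrt{\beta(t)}$, which is well defined because Assumption~\ref{assump1} ensures $\beta(t)>0$, and rewrite the exponent $-\phi(t)-\dot{\phi}(t)(x-t)-\tfrac{1}{2\underline{\sigma}(t)^2}(x-t)^2$ as a quadratic polynomial in $x$ with leading coefficient $-1/(2\underline{\sigma}(t)^2)$. Completing the square, its vertex sits at $x = t - \underline{\sigma}(t)^2\dot{\phi}(t) = \underline{u}(t)$, so the exponent equals $-\tfrac{(x-\underline{u}(t))^2}{2\underline{\sigma}(t)^2} + r(t)$ with an $x$-independent remainder $r(t)$; collecting the constants dropped during the completion of the square (equivalently, evaluating the exponent at $x=\underline{u}(t)$) gives $r(t) = \tfrac12 \underline{\sigma}(t)^2\dot{\phi}(t)^2 - \phi(t)$. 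Finally, pulling out the factor $\sqrt{2\pi\underline{\sigma}(t)^2}$ so as to recognize the normalized pdf $g(\,\cdot\,;\underline{u}(t),\underline{\sigma}(t))$ yields $\underline{b}(x;t) = \underline{C}(t)\,g(x;\underline{u}(t),\underline{\sigma}(t))$ with $\underline{C}(t) = \sqrt{2\pi\underline{\sigma}(t)^2}\,\exp(r(t))$, which matches the stated formulas for $\underline{\sigma}(t)$, $\underline{u}(t)$ and $\underline{C}(t)$.

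I do not anticipate any genuine obstacle here: the argument is a one-line monotonicity step followed by an elementary completion of the square. The only point requiring care is the bookkeeping of the $x$-independent terms, so that $\underline{C}(t)$ comes out exactly as claimed — in particular correctly absorbing the $\sqrt{2\pi\underline{\sigma}(t)^2}$ normalization constant of $g$ and keeping the sign of the $\tfrac12\underline{\sigma}(t)^2\dot\phi(t)^2$ term produced by the square completion.
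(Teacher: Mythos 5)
Your proof is correct and follows essentially the same route as the paper: both arguments complete the square in the quadratic upper bound of Assumption~\ref{assump1} and then read off the Gaussian parameters, the only cosmetic difference being that you exponentiate before completing the square while the paper completes the square at the level of $\phi$ and then uses $\phi=-\log\pi$. The bookkeeping of $\underline{\sigma}(t)$, $\underline{u}(t)$ and $\underline{C}(t)$ in your version matches the stated formulas.
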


\begin{proof}  
Let $t \mapsto \beta(t)$ defined as in Assumption \ref{assump1}. We have:
\begin{align}
(\forall (x,t) \in \bR^2) \quad \phi(t) &+ \dot{\phi}(t) (x - t) + \frac{\beta(t)}{2} (x - t)^2 \\
& = \frac{\beta(t)}{2} x^2 + (\dot{\phi}(t) - \beta t) x + \phi(t) - t \dot{\phi}(t) + \frac{\beta(t)}{2} (t)^2\\
& = \frac{\beta(t)}{2} \left(x - (t - \frac{1}{\beta(t)} \dot{\phi}(t)) \right)^2 + \phi(t) - \frac{1}{2 \beta(t)} (\dot{\phi}(t))^2
\end{align}
Thus, according to Assumption \ref{assump1}, for every $t \in \bR$,
\begin{equation}
(\forall x \in \bR) \quad
\phi(x) \leq \frac{\beta(t)}{2} (x - u(t))^2 + \phi(t) - \frac{1}{2 \beta(t)} (\dot{\phi}(t))^2
\end{equation}
where we set
\begin{equation}
(\forall t \in \bR) \quad u(t) = t - \beta(t)^{-1} \dot{\phi}(t).
\end{equation}
Hence the result, using $\phi =- \log \pi$.
\end{proof}
\subsection{Gaussian tangent majorants}

The majoration step requires an additional assumption on $\phi$, as follows: 
\begin{assumption}
\label{assump2}
For every $t \in \mathbb{R}$, there exists $\nu(t)>0$ such that
\begin{equation}
(\forall x \in \mathbb{R}) \quad \phi(x) \geq \phi(t) + \dot{\phi}(t) (x-t) + \frac{\nu(t)}{2} (x-t)^2
\end{equation}
\end{assumption}

Assumption~\ref{assump2} is related to the notion of \emph{strong convexity} (also sometimes called \emph{uniform convexity}) \cite{Bauschke2017}. It holds for instance for twice differentiable convex functions $\phi$ such that $\min_{x \in \bR} \ddot{\phi}(x) = \nu > 0$, by setting $\nu(t) = \nu$ for every $t \in \bR$. It also holds for $\phi: x \to \tilde{\phi}(x) + a x^2$, with any $a>0$, and $\tilde{\phi}$ convex on $\bR$. More generally, if $(\phi_j)_{1 \leq j \leq J}$ are convex functions, and for some $j \in \left\{1,\ldots,J\right\}$, $\phi_j$ 
satisfies Assumption \ref{assump2} with $\nu_j(\cdot)$, then $\sum_{j=1}^J \phi_j$ also does, with $t \mapsto \nu(t) = \nu_j(t)$. 


\begin{lemma}
\label{lem:ass2}
Under Assumption~\ref{assump2}, the majoration conditions \eqref{eq_major} are satisfied, as soon as
\begin{equation}
(\forall (x,t) \in \bR^2) \quad
\overline{b}(x;t)    = \overline{C}(t) g(x; \overline{u}(t), \overline{\sigma}(t)),
\end{equation}
with variance, mean and scale parameters given by
\begin{equation}
\label{eq:uppermean_variance_constant_gauss}
(\forall t \in \bR) \quad
\begin{cases}
\overline{\sigma}(t) & = 1/\sqrt{\nu(t)},\\
\overline{u}(t) & = t -\overline{\sigma}(t)^2 \dot{\phi}(t), \\
\overline{C}(t) & = \sqrt{2 \pi \overline{\sigma}(t)^2} \exp\left(- \phi(t) + (\overline{\sigma}(t)^2/2) (\dot{\phi}(t))^2\right). \\
\end{cases}
\end{equation}
\end{lemma}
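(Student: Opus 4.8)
The plan is to mirror, line for line, the argument already used for Lemma~\ref{lem:ass1}, reversing every inequality and replacing the majorant curvature $\beta(t)$ by the minorant curvature $\nu(t)$ provided by Assumption~\ref{assump2}. First I would take the quadratic lower bound $\phi(x) \geq \phi(t) + \dot{\phi}(t)(x-t) + \frac{\nu(t)}{2}(x-t)^2$ and complete the square in $x$, exactly as in the display chain of the proof of Lemma~\ref{lem:ass1}: the right-hand side rewrites as $\frac{\nu(t)}{2}\bigl(x - \overline{u}(t)\bigr)^2 + \phi(t) - \frac{1}{2\nu(t)}(\dot{\phi}(t))^2$, where $\overline{u}(t) = t - \nu(t)^{-1}\dot{\phi}(t)$. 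Since $\nu(t) > 0$ for every $t \in \bR$, this manipulation is legitimate.

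Next I would exponentiate. Because $\pi = \exp(-\phi)$ and $s \mapsto e^{-s}$ is decreasing, the inequality $\phi(x) \geq \frac{\nu(t)}{2}(x-\overline{u}(t))^2 + \phi(t) - \frac{1}{2\nu(t)}(\dot{\phi}(t))^2$ turns into $\pi(x) \leq \exp\bigl(-\phi(t) + \frac{1}{2\nu(t)}(\dot{\phi}(t))^2\bigr)\exp\bigl(-\frac{\nu(t)}{2}(x-\overline{u}(t))^2\bigr)$. Setting $\overline{\sigma}(t) = 1/\sqrt{\nu(t)}$, the last factor equals $\sqrt{2\pi\overline{\sigma}(t)^2}\, g(x;\overline{u}(t),\overline{\sigma}(t))$, and absorbing the prefactor $\sqrt{2\pi\overline{\sigma}(t)^2}$ into the constant yields precisely $\overline{b}(x;t) = \overline{C}(t)\, g(x;\overline{u}(t),\overline{\sigma}(t))$ with $\overline{\sigma}(t)$, $\overline{u}(t)$, $\overline{C}(t)$ as in \eqref{eq:uppermean_variance_constant_gauss}. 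This establishes the inequality part of \eqref{eq_major}.

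It remains to verify the tangency identity $\pi(t) = \overline{b}(t;t)$. Evaluating Assumption~\ref{assump2} at $x = t$ shows the quadratic lower bound is attained there with equality, so every inequality in the chain above is an equality at $x = t$, whence $\pi(t) = \overline{b}(t;t)$; equivalently one substitutes $x=t$ in the closed form, using $t - \overline{u}(t) = \overline{\sigma}(t)^2 \dot{\phi}(t)$ so that $\frac{\nu(t)}{2}(t-\overline{u}(t))^2 = \frac{\overline{\sigma}(t)^2}{2}(\dot{\phi}(t))^2$, which makes the exponential terms cancel and leaves $\overline{C}(t)\, g(t;\overline{u}(t),\overline{\sigma}(t)) = e^{-\phi(t)} = \pi(t)$. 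No step presents a genuine difficulty: the argument is a direct dual of Lemma~\ref{lem:ass1}, and the only point needing a little care is the bookkeeping of the normalization constant $\overline{C}(t)$, i.e.\ carrying the factor $\sqrt{2\pi\overline{\sigma}(t)^2}$ that makes $g$ integrate to one, together with keeping track that the exponential inequality flips direction relative to the minorant case.
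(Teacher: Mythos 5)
Your proposal is correct and follows exactly the route the paper intends: the paper's proof of this lemma simply states that it mirrors the proof of Lemma~\ref{lem:ass1} with the inequalities reversed, which is precisely what you carry out (completing the square with $\nu(t)$ in place of $\beta(t)$, exponentiating to flip the inequality, and reading off the Gaussian parameters). Your explicit verification of the tangency identity $\pi(t)=\overline{b}(t;t)$ is a welcome detail that the paper leaves implicit.
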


\proof{The proof follows similar structure than the one of Lemma~\ref{lem:ass1}, reverting inequalities.}

\subsection{First bounds and discussion}

According to what precedes, we are able to construct, for every $t \in \bR$, unnormalized Gaussian densities $\overline{b}(\cdot;t)$ and $\underline{b}(\cdot;t)$ that are tangent majorant and minorant functions for $\pi$ at $t$ i.e.,
\begin{equation}
(\forall x \in \bR) \quad \underline{b}(x;t) \leq \pi(x) \leq \overline{b}(x;t).
\label{eq_majmin0}
\end{equation}
An example is displayed in Fig.~\ref{fig:minmaj}, where the target function is displayed in blue line, the minorant functions are in black thin lines (left plot) and the majorant functions are in red thin lines (right plot).
\begin{figure}[h]
\centering
\begin{tabular}{@{}c@{}c@{}}
\includegraphics[width = 5cm]{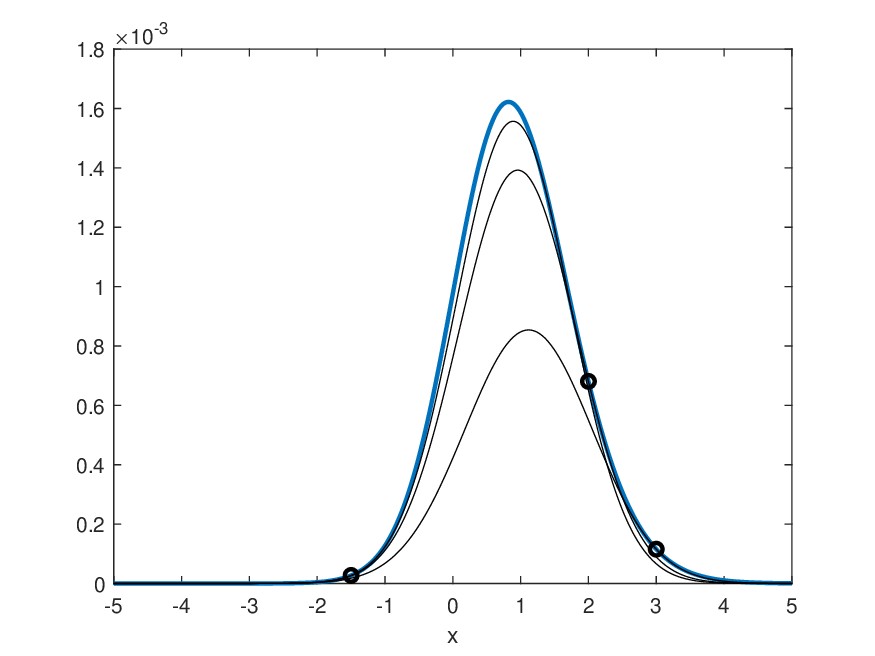}
&
\includegraphics[width = 5cm]{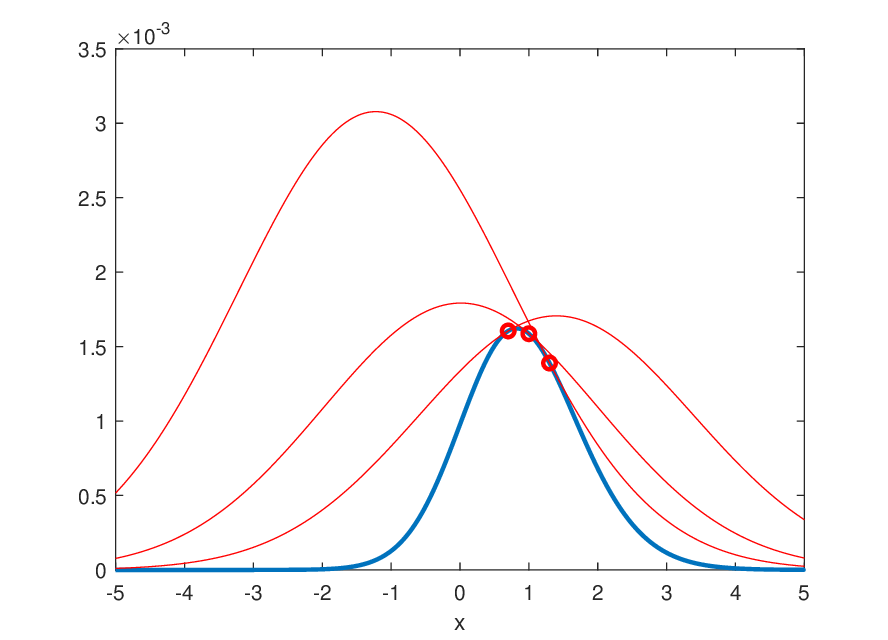}
\end{tabular}
\caption{Example of function $\pi$ (blue thick line) and minorant/majorant functions (black/red thin lines) at various tangency points (black/red circles)}
\label{fig:minmaj}
\end{figure}
 
We define the \emph{positive} and \emph{negative} parts of a function $f$ by
\begin{equation}
\forall x \in \mathbb{R}, \quad f^+(x) = \max\{f(x), 0\}, \quad f^-(x) = \max\{-f(x), 0\}.
\end{equation}
With these definitions, we have the following identities:
\begin{equation}
\forall x \in \mathbb{R}, \quad f(x) = f^+(x) - f^-(x), \quad |f(x)| = f^+(x) + f^-(x),
\end{equation}
\cblue{respectively.}
Let us denote, for every $t \in \mathbb{R}$,
\begin{align}
\underline{\Ic}(t) & = \int_{\mathbb{R}} f^+(x) \, \underline{b}(x;t) \, dx - \int_{\mathbb{R}} f^-(x) \, \overline{b}(x;t) \, dx \nonumber \\
& = \underline{C}(t) \int_{\mathbb{R}}  f^+(x) \, g(x; \underline{u}(t), \underline{\sigma}(t)) \, dx - \overline{C}(t) \int_{\mathbb{R}}  f^-(x) \, g(x; \overline{u}(t), \overline{\sigma}(t)) \, dx,
\end{align}
and
\begin{align}
\overline{\Ic}(t) & = \int_{\mathbb{R}} f^+(x) \, \overline{b}(x;t) \, dx - \int_{\mathbb{R}} f^-(x) \, \underline{b}(x;t) \, dx \nonumber \\
& = \overline{C}(t) \int_{\mathbb{R}} f^+(x) \, g(x; \overline{u}(t), \overline{\sigma}(t)) \, dx - \underline{C}(t) \int_{\mathbb{R}} f^-(x) \, g(x; \underline{u}(t), \underline{\sigma}(t)) \, dx.
\end{align}

We deduce from \eqref{integral_interest}, and \eqref{eq_majmin0} the following bounds,
\begin{equation}
\label{eq:bound_I}
(\forall t \in \bR) \quad \underline{\Ic}(t)\leq \Ic \leq \overline{\Ic}(t).
\end{equation}
Hence, the following inequality holds:
\begin{equation}
\max_{t \in \bR} \underline{\Ic}(t) \leq \Ic \leq \min_{t \in \bR} \overline{\Ic}(t).
\label{eq:bound1}
\end{equation}

\cblue{When the test function $f$ reads as a monomial, as in \eqref{eq:monomial}, or as a sum of monomials}, the terms $\underline{\Ic}(t)$ and $\overline{\Ic}(t)$ for every $t \in \mathbb{R}$, involve integrals that can be read as moments of (possibly truncated) Gaussian distributions. Such moments are well-defined, and can be computed easily, with high precision, based on recursive relations relying on the error function \texttt{erf}. These expressions are recalled in Appendix~A.

Still, computing the optimal bounds in \eqref{eq:bound1} requires solving two optimization problems over the set $t \in \mathbb{R}$, whose resolution may not be straightforward. Relying on the preliminary results established in Lemmas~\ref{lem:ass1} and~\ref{lem:ass2}, we propose in the remainder of the paper an iterative approximation strategy to refine the bounds, featuring two main ingredients: (i) a recursive and adaptive selection of a finite set of representative tangency points, and (ii) the computation of integral bounds using piecewise-Gaussian upper and lower approximations of $\pi$, delimited by the retained tangency points.

Thanks to this approach presented in Section \ref{sec_compound}, no explicit optimization is needed, and all numerical computations are straightforward, under the mild assumption that \cblue{truncated Gaussian moments associated to $f$}, are computable.


\section{Sequential refinement of the bounds}
\label{sec_compound}
\subsection{Introduction}

In Section \ref{sec_gaussbounds}, we have derived a method to construct tangent minorant and majorant functions for $\pi$, for any given value of tangency point $t \in \bR$. As we can observe in Figure~\ref{fig:minmaj}, the accuracy of such approximations highly depends on the distance to the tangency point. When $x = t$, the approximations are exact (because of the tangency property), and, as $x$ gets further from $t$, a degradation in terms of quality of approximation can be expected. We propose an improved approximation strategy, described in Figure~\ref{fig:compound}: Given a list of $M \geq 2$ tangency points, we construct the associated tangent minorant (thin black lines) and majorant (thin red lines) functions for $\pi$ (blue line) at these points. Then, from this set of $M$ functions, improved minorant (thick blue line) and majorant (thick red line) approximations of $\pi$ are deduced. These envelopes are defined in a piecewise manner, as the upper (resp. lower) bound of the available tangent minorant (resp. majorant) functions. This idea is at the core of the proposed Algorithm \ref{algo_compund}. In order to improve again the performance of the method, we then propose an iterative selection of the tangency points, through our Algorithm 2. By construction, such approach guarantees an increasing precision of the bounds along its iterations, and, as we will show in the next section, it produces a sequence of bounds that asymptotical converge to the sought integral value.

\begin{figure}[H]
\centering
\begin{tabular}{@{}c@{}c@{}}
\includegraphics[width = 5cm]{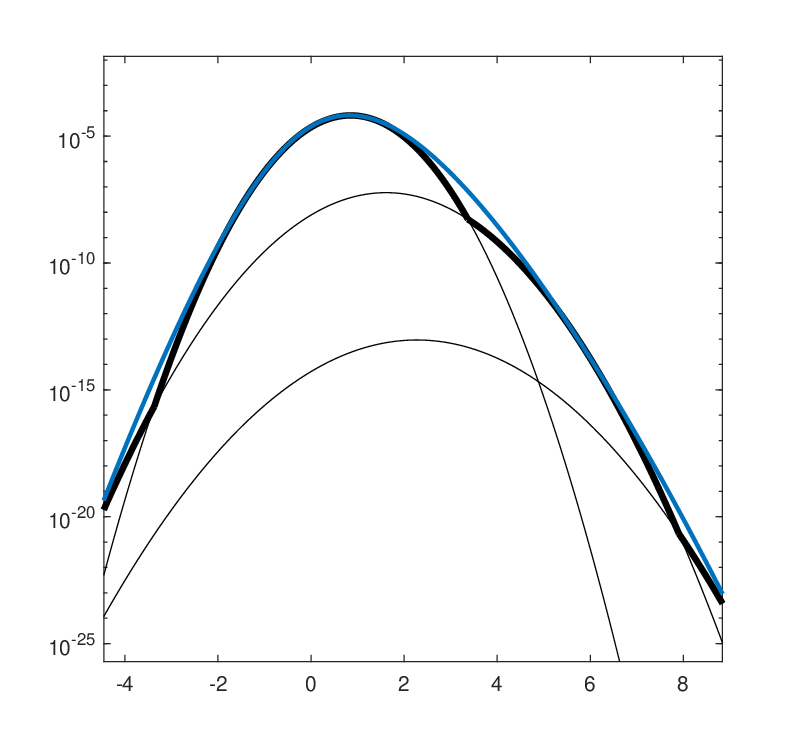}
&
\includegraphics[width = 5cm]{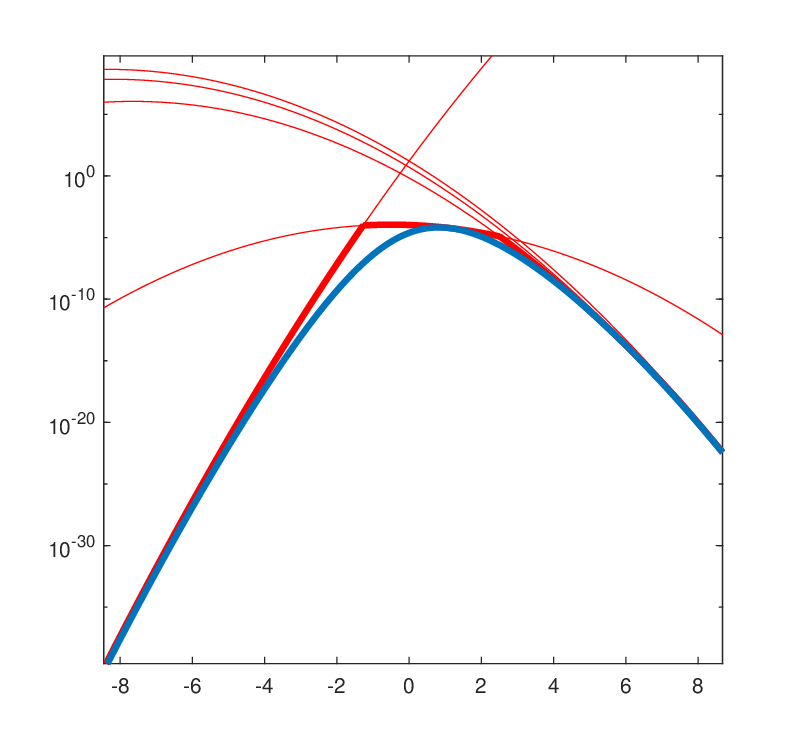}
\end{tabular}
\caption{Example of function $\pi$ (blue thick line), set of tangent minorant/majorant functions (black/red thin lines) at various tangency points, and associated piecewise minorant/majorant function (black/red thick line).}
\label{fig:compound}
\end{figure}


\subsection{Minorant/majorant envelopes}

Let us introduce the notation that will be required below. We set $M \geq 1$ tangency points, and $\mathbf{T} = \left[t_1, \cdots, t_m, \cdots t_M\right] \in \mathbb{R}^M$ as the vector that contains the $M$ values of the tangency points, sorted in increasing order. 
The piecewise minorant function $\overline{C}(\cdot;\mathbf{T})$ and piecewise majorant function $\underline{C}(\cdot;\mathbf{T})$, associated to $\mathbf{T}$ are defined by
\begin{equation}
(\forall x \in \bR) \quad \underline{C}(x;\mathbf{T}) = \max_{m \in \left\{1,\ldots,M\right\}} \underline{b}(x;t_m),
\label{eq:compound_min}
\end{equation}
with $\left\{\underline{b}(\cdot;t_m)\right\}_{1 \leq m \leq M}$ the set of $M$ tangent minorant functions with tangency points $(t_m)_{1 \leq m \leq M}$, and
\begin{equation}
(\forall x \in \bR) \quad \overline{C}(x;\mathbf{T}) = \min_{m \in \left\{1,\ldots,M\right\}} \overline{b}(x;t_m),
\label{eq:compound_maj}
\end{equation}
with $\left\{\overline{b}(\cdot;t_m)\right\}_{1 \leq m \leq M}$ the set of $M$ tangent majorant functions with tangency points $(t_m)_{1 \leq m \leq M}$. Therefore, by construction,
\begin{equation}
(\forall x \in \bR) \quad \underline{C}(x;\mathbf{T}) \leq \pi(x) \leq \overline{C}(x;\mathbf{T}). \label{eq:enveloppepi}
\end{equation}
Note that, due to the tangency condition, and the optimality conditions \eqref{eq:compound_min}-\eqref{eq:compound_maj}, the function and the piecewise majorant and minorant functions take the same value at each tangent point, i.e., 
\begin{equation}
(\forall m \in \left\{1,\ldots,M\right\}) \quad \overline{C}(t_m;\mathbf{T}) = \underline{C}(t_m;\mathbf{T}) = \pi(t_m).
\end{equation}
%

The optimization problems \eqref{eq:compound_min} and \eqref{eq:compound_maj} are known in the literature of combinatorial algorithmics as the search for \emph{upper enveloppe} and \emph{lower enveloppe} of a collection of univariate functions. Such problems can be solved in an efficient manner, using \cblue{the recursive algorithm from \cite{AGARWAL20001}, relying on Davenport–Schinzel Sequences}. We now describe this computational algorithm applied to our specific problem, i.e., with majorant/minorant functions that take the form of unnormalized Gaussian densities. 

Let us focus on problem~\eqref{eq:compound_min}, which amounts to determining the upper envelope of the set of minorant functions. We introduce the short notation $\psi_{t_m}(x) \triangleq \underline{b}(x; t_m)$ the minorant built from the tangency point $t_m$, for every $m \in \{1, \ldots, M\}$. 
The upper envelope is obtained by calling the recursive Algorithm~\ref{algo_compund}, starting with input $\mathbf{T} = [t_1, \dots,  t_M]$ and the associated family $\{\psi_{t_m}\}_{1 \leq m \leq M}$. \cblue{The algorithm returns $\mathbf{v} = (v_1, \dots, v_i, \ldots, v_{N})$, the ordered sequence of indices realizing the upper envelope in \eqref{eq:compound_min} on consecutive maximal intervals. The integer \(N\) is exactly the number of such intervals, hence the number of \emph{breakpoints} (i.e., points where the maximizer changes) is $N-1$. The tangent points associated to each breakpoints are gathered in $\u \in \mathbb{R}^{N-1}$. The envelope can be written in piecewise form once the breakpoints are known. Then the solution of the maximization problem \eqref{eq:compound_min} is given explicitly by
\[
(\forall x \in \mathbb{R}) \quad \underline{C}(x;\T) = \psi_{u_i}(x), \quad x \in [v_{i}, v_{i+1}],
\]
for \(i=1,\dots,N-1\) (note that we authorize $v_1 = - \infty$ and $v_{N} = + \infty$). Each breakpoint \(v_i\) is a solution of an equation
\[
\psi_{u_i}(v_i) = \psi_{u_{i+1}}(v_i), \quad \text{with} \quad \psi_{u_i}(x) \ge \psi_{t_m}(x),\;\text{for every $t_m$ in a neighborhood of } v_i.
\]
Denote $s>0$ such that any pair of functions $(\psi_{t_m},\psi_{t_m'})$
intersects at most 
$s$ times. The combinatorial complexity of the constructed envelope (and thus the number of intervals and breakpoints handled by Algorithm \ref{algo_compund}) is bounded by 
$\lambda_s(M)$, the Davenport–Schinzel extremal function. This factor grows nearly linearly in $M$ for fixed $s$~\cite{AGARWAL20001}.
} 

\cblue{
\begin{algorithm}[H]
\caption{Recursive Compound Minimization Algorithm}
\label{algo_compund}
\begin{algorithmic}[1]
\Require Vector $\mathbf{T}$ of size $L$, and functions $\{\psi_{t_i}\}_{1 \leq i \leq L}$
\Ensure $\mathbf{u}$ of size $N-1$, and $\mathbf{v}$ of size $N$
\State Partition $\mathbf{T}$ into $\mathbf{T}^{(1)}$ and $\mathbf{T}^{(2)}$ of sizes $L_1$ and $L_2$, with $L_1 + L_2 = L$ and $L_1, L_2 \leq \lceil \tfrac{L}{2} \rceil$
\If{$L_1 > 1$}
    \State Call Algorithm~\ref{algo_compund} on $\mathbf{T}^{(1)}$ and $\{\psi_{t_i}\}_{1 \leq i \leq L_1}$, returning $\mathbf{u}^{(1)}$ and $\mathbf{v}^{(1)}$
\Else
    \State Initialize $\mathbf{u}^{(1)} = (t_1)$ and $\mathbf{v}^{(1)} = (-\infty, +\infty)$
\EndIf
\If{$L_2 > 1$}
    \State Call Algorithm~\ref{algo_compund} on $\mathbf{T}^{(2)}$ and $\{\psi_{t_{L_1+i}}\}_{1 \leq i \leq L_2}$, returning $\mathbf{u}^{(2)}$ and $\mathbf{v}^{(2)}$
\Else
    \State Initialize $\mathbf{u}^{(2)} = (t_L)$ and $\mathbf{v}^{(2)} = (-\infty, +\infty)$
\EndIf
\State Merge and sort $\mathbf{v}^{(1)}$ and $\mathbf{v}^{(2)}$ into
$
\mathbf{v}' = (v'_1 < v'_2 < \cdots < v'_{N'})
$
\State Initialize an empty list $\tilde{\mathbf{v}}$
\For{$i = 1$ to $N'-1$}
    \State Find $u_i^{*(1)} \in \mathbf{u}^{(1)}$ such that:
    \begin{equation}
        (\forall x \in (v'_i, v'_{i+1}))(\forall j \in \mathbf{u}^{(1)}) \quad \psi_{u_i^{*(1)}}(x) \geq \psi_{t_j}(x) \label{eq_upper_env_1}
    \end{equation}
    \State Find $u_i^{*(2)} \in \mathbf{u}^{(2)}$ such that:
    \begin{equation}
        (\forall x \in (v'_i, v'_{i+1}))(\forall j \in \mathbf{u}^{(2)}) \quad \psi_{u_i^{*(2)}}(x) \geq \psi_{t_j}(x) \label{eq_upper_env_2}
    \end{equation}
    \State Compute all intersection points of $\psi_{u_i^{*(1)}}$ and $\psi_{u_i^{*(2)}}$ in $(v'_i, v'_{i+1})$
    \State Add these points to $\tilde{\mathbf{v}}$
\EndFor
\State Define $\mathbf{v}$ as the sorted union of $\mathbf{v}'$ and $\tilde{\mathbf{v}}$, as
$
\mathbf{v} = (v_1 < v_2 < \cdots < v_N)
$
\State Initialize $\mathbf{u} \leftarrow ()$
\For{$i = 1$ to $N-1$}
    \State Select any $x \in (v_i, v_{i+1})$
    \State Compute
    $
    u_i = \arg\max_{m \in \mathbf{u}^{(1)} \cup \mathbf{u}^{(2)}} \psi_{t_m}(x)
    $
    \State Append $u_i$ to $\mathbf{u}$
\EndFor
\end{algorithmic}
\end{algorithm}
}




The lower envelope of the set of majorant functions, defined as the solution of Eq. \eqref{eq:compound_maj} can be obtained similarly as in Alg. \ref{algo_compund}. More precisely, the inequalities order should be reverted in both Eqs. \eqref{eq_upper_env_1} and \eqref{eq_upper_env_2} in lines 13 and 14 of Alg. \ref{algo_compund}, while in line 21, the argmin (instead of argmax) in each interval must be selected.
\subsection{Integral approximation}

Once the upper and lower envelopes in \eqref{eq:enveloppepi} have been obtained, using the same reasoning that we use to find the equation \ref{eq:bound_I}, we are now able to express new bounds for the integral $\Ic$ as
\begin{equation}
\int_{\bR} f^+(x) \underline{C}(x;\mathbf{T} ) dx -  \int_{\bR} f^-(x) \overline{C}(x;\mathbf{T} ) dx\leq \Ic \leq \int_{\bR} f^+(x) \overline{C}(x;\mathbf{T} ) dx -\int_{\bR} f^-(x) \underline{C}(x;\mathbf{T} ) dx.
\label{eq:compoundbnd}
\end{equation}
If $f$ reads as in \eqref{eq:monomial}, the integrals appearing in the bounds \eqref{eq:compoundbnd} involve products of monomials with piecewise unnormalized Gaussian densities, as defined in \eqref{eq:compound_min} and \eqref{eq:compound_maj}. These integrals can be computed explicitly piecewise, provided that the moments of the (possibly truncated) Gaussian distribution are available (see our Appendix~A). 

\subsection{Iterative tangency point selection}
In order to refine again the precision of the bounds on $\Ic$, we now define an iterative, and adaptive, strategy for the selection of the tangency points, provided in Algorithm~\ref{algo:tangency}. At each iteration $n \in \bN^*$ of Alg.~\ref{algo:tangency}, minorant and majorant functions $\underline{C}(x;\mathbf{T}^{(n)})$ and $\overline{C}(x;\mathbf{T}^{(n)})$ are constructed, \cblue{in a piecewise manner}, for the current set of tangency points $\mathbf{T}^{(n)}$, by Alg.~\ref{algo_compund}. Lower and upper bounds, \( \underline{\Ic}^{(n)} \) and \( \overline{\Ic}^{(n)} \), for \( \Ic \), \cblue{i.e., such that $\underline{\Ic}^{(n)} \leq \Ic \leq \overline{\Ic}^{(n)}$,} are computed following the framework outlined in Eq.~\eqref{eq:compoundbnd}:

\begin{equation}
    \forall n \in \mathbb{N}^*, \quad \overline{\Ic}^{(n)} = \int_{\mathbb{R}} f^+(x) \overline{C}(x; \mathbf{T}^{(n)}) \, dx-\int_{\mathbb{R}} f^-(x) \underline{C}(x; \mathbf{T}^{(n)}) \, dx,
    \label{eq:ub_I}
\end{equation}

\begin{equation}
    \forall n \in \mathbb{N}^*, \quad \underline{\Ic}^{(n)} = \int_{\mathbb{R}} f^+(x) \underline{C}(x; \mathbf{T}^{(n)}) \, dx -\int_{\mathbb{R}} f^-(x) \overline{C}(x; \mathbf{T}^{(n)}).
    \label{eq:lb_I}
\end{equation}

The maximization of the difference between the {integral bounds provided by} the majorant and the minorant functions in all intervals between consecutive tangent points allows to determine a new tangency point, that yields the new set $\mathbf{T}^{(n+1)}$. 

\newpage
\begin{algorithm}[H]
\caption{Adaptive Tangency Point Selection}
\label{algo:tangency}
\begin{algorithmic}[1]
\Require $\mathbf{T}^{(1)} = [t_1^{(1)}, \dots, t_{M_1}^{(1)}] \in \mathbb{R}^{M_1}$ with $M_1 \geq 1$, sorted increasingly; precision value $\epsilon > 0$
\Ensure $\underline{\Ic}$ and $\overline{\Ic}$, lower and upper bounds on $\Ic$

\For{$n = 1, 2, \ldots$}
    \State Construct $\overline{C}(\cdot; \mathbf{T}^{(n)})$ and $\underline{C}(\cdot; \mathbf{T}^{(n)})$ using Alg. \ref{algo_compund}
    \State Define subintervals $S_1^{(n)} = (-\infty, t_1^{(n)}]$, $S_2^{(n)} = [t_1^{(n)}, t_2^{(n)}]$, ..., $S_{M_n+1}^{(n)} = [t_{M_n}^{(n)}, +\infty)$
    \For{$i = 1$ to $M_n+1$}
        \State Compute:
        \begin{equation}
        \begin{cases}
        \underline{\Ic}_i^{(n)} = \int_{ S_i^{(n)}} f^+(x)\, \underline{C}(x; \mathbf{T}^{(n)}) \, dx -\int_{S_i^{(n)}} f^-(x)\, \overline{C}(x; \mathbf{T}^{(n)}) \, dx \\
        \overline{\Ic}_i^{(n)} = \int_{S_i^{(n)}} f^+(x)\, \overline{C}(x; \mathbf{T}^{(n)}) \, dx - \int_{ S_i^{(n)}} f^-(x)\, \underline{C}(x; \mathbf{T}^{(n)}) \, dx
        \end{cases}
        \end{equation}
    \EndFor
    \State Compute bounds:
    \begin{equation}
    \begin{cases}
    \underline{\Ic}^{(n)} = \sum_{i=1}^{M_n+1} \underline{\Ic}_i^{(n)} \\
    \overline{\Ic}^{(n)} = \sum_{i=1}^{M_n+1} \overline{\Ic}_i^{(n)}
    \end{cases}
    \end{equation}
    \If{$\overline{\Ic}^{(n)} - \underline{\Ic}^{(n)} \leq \epsilon$}
        \State \Return $(\underline{\Ic}^{(n)}, \overline{\Ic}^{(n)})$
    \Else
        \For{$i = 1$ to $M_n+1$}
            \State Compute:
            \begin{equation}
            G_i^{(n)} = \overline{\Ic}_i^{(n)} - \underline{\Ic}_i^{(n)}
            \end{equation}
        \EndFor
        \State Select $\hat{\iota}$:
        \begin{equation}
        \hat{\iota} = \operatorname{argmax}_{i \in \left\{1,\ldots,M_n+1\right\}} G_i^{(n)}
        \label{eq:iota}
        \end{equation}
        \State Choose new point $\hat{t} \in S_{\hat{\iota}}^{(n)}$
        \State Update:
        \begin{equation}
        \mathbf{T}^{(n+1)} = [t_1^{(n)}, \dots, t_{\hat{\iota}-1}^{(n)}, \hat{t}, t_{\hat{\iota}}^{(n)}, \dots, t_{M_n}^{(n)}]
        \end{equation}
    \EndIf
\EndFor
\end{algorithmic}
\end{algorithm}

\newpage

	
A key feature in our method is the addition of a new tangency points in the intervals where the approximation gap is maximum (step 13 of Alg.~\ref{algo:tangency}). As we will show in our convergence analysis in Sec.~\ref{sec_convergence_analysis}, the implementation of step 13, coupled with the definition of $\mathbf{T}^{(1)}$, has an impact in the theoretical guarantees of the proposed approach. We provide, in Sec.~\ref{sec:algopractical}, a detailed practical solution to select new tangency points, that allows to met the convergence requirements and to reach excellent practical performance. 


\section{Convergence Analysis}
\label{sec_convergence_analysis}
\cblue{
 Throughout the section, we suppose that Assumptions \ref{assump1} and \ref{assump2} hold, for continuous functions $\beta(.)$ and $\nu(.)$.}
The main idea of our analysis relies on the control of the following gap between the upper and lower envelopes, defined, for every \( n \in \mathbb{N}^* \) and \( x \in \mathbb{R} \), as:
\begin{equation}
\label{eq:h_n}
h^{(n)}(x) = \overline{C}(x; \mathbf{T}^{(n)}) - \underline{C}(x; \mathbf{T}^{(n)}),
\end{equation}
where \( \overline{C}(x; \mathbf{T}^{(n)}) \) and \( \underline{C}(x; \mathbf{T}^{(n)}) \) are defined in Eqs.~\ref{eq:compound_maj} and \ref{eq:compound_min}, respectively.

We start by establishing two preliminary lemmas. Lemma~\ref{ref:lemmadecrease_i} gives fundamental properties of the sequence $(h^{(n)}(x))_{n \in \mathbb{N}^*}$. These properties are then used in Lemma~\ref{lemm:compact} to construct Gaussian bounds for the products $f^+ h^{(n)}$ and $f^- h^{(n)}$, and to obtain explicit control of their integrals outside compact sets.

The convergence analysis below is carried out for any sequence of tangency sets $(T^{(n)})_{n\ge1}$ satisfying the refinement condition stated in Theorem~\ref{th1:convergence}. This property is not enforced by Algorithm~\ref{algo:tangency} alone. Its practical enforcement is discussed afterward in Section~\ref{sec:algopractical}. Under this assumption, we show that, for $n$ large enough, the absolute difference between $\overline{\mathcal{I}}^{(n)}$ and $\underline{\mathcal{I}}^{(n)}$ can be made arbitrarily small, which yields the desired convergence result.

\cblue{Part of the analysis is made under the assumption of a monomial $f$ (i.e., \eqref{eq:monomial} holds). Relaxation of this assumption is discussed at the end of the section.}

\subsection{Preliminary lemmas}

\begin{lemma}
Let $(h^{(n)}(x))_{n \in \mathbb{N}^*}$ defined in \eqref{eq:h_n}. The following properties hold:
\begin{enumerate}
    \item[(i)] For every \( x \in \mathbb{R} \), the sequence \( (h^{(n)}(x))_{n \in \mathbb{N}^*} \) is positive-valued and monotonically decreasing.
    \label{ref:lemmadecrease_i}
    \item[(ii)] For every \( n \in \mathbb{N}^* \), \( h^{(n)} \) is continuous on \( \mathbb{R} \).
    \label{ref:lemmadecrease_ii}
\end{enumerate}
\end{lemma}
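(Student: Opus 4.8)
The plan is to establish the two items separately, both following directly from the envelope structure described in Section~\ref{sec_compound} and the adaptive update rule of Algorithm~\ref{algo:tangency}.

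For item (i), fix $x \in \mathbb{R}$. Positivity is immediate from \eqref{eq:enveloppepi}: for every $n$ we have $\underline{C}(x;\mathbf{T}^{(n)}) \leq \pi(x) \leq \overline{C}(x;\mathbf{T}^{(n)})$, hence $h^{(n)}(x) \geq 0$. The monotonicity comes from the nestedness of the tangency-point sets: by the update in step~16 of Alg.~\ref{algo:tangency}, $\mathbf{T}^{(n)} \subseteq \mathbf{T}^{(n+1)}$ (a point is inserted, none removed). Now recall from \eqref{eq:compound_min} that $\underline{C}(x;\mathbf{T}) = \max_{t_m \in \mathbf{T}} \underline{b}(x;t_m)$ is a maximum over the family indexed by $\mathbf{T}$; enlarging $\mathbf{T}$ can only increase this maximum, so $\underline{C}(x;\mathbf{T}^{(n)}) \leq \underline{C}(x;\mathbf{T}^{(n+1)})$. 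Symmetrically, from \eqref{eq:compound_maj}, $\overline{C}(x;\mathbf{T})$ is a minimum over the same family, so enlarging $\mathbf{T}$ can only decrease it: $\overline{C}(x;\mathbf{T}^{(n+1)}) \leq \overline{C}(x;\mathbf{T}^{(n)})$. Subtracting, $h^{(n+1)}(x) \leq h^{(n)}(x)$, which is the claimed monotone decrease.

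For item (ii), fix $n$ and observe that each tangent minorant $\underline{b}(\cdot;t_m) = \underline{C}(t_m) g(\cdot;\underline{u}(t_m),\underline{\sigma}(t_m))$ and each tangent majorant $\overline{b}(\cdot;t_m) = \overline{C}(t_m) g(\cdot;\overline{u}(t_m),\overline{\sigma}(t_m))$ is a scaled Gaussian density, hence continuous on $\mathbb{R}$ (the scale parameters $\underline{\sigma}(t_m) = 1/\sqrt{\beta(t_m)}$ and $\overline{\sigma}(t_m) = 1/\sqrt{\nu(t_m)}$ are well-defined and strictly positive under Assumptions~\ref{assump1} and~\ref{assump2}). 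Since $\mathbf{T}^{(n)}$ is finite, $\underline{C}(\cdot;\mathbf{T}^{(n)})$ is a pointwise maximum of finitely many continuous functions, and $\overline{C}(\cdot;\mathbf{T}^{(n)})$ is a pointwise minimum of finitely many continuous functions; both operations preserve continuity. Therefore $h^{(n)} = \overline{C}(\cdot;\mathbf{T}^{(n)}) - \underline{C}(\cdot;\mathbf{T}^{(n)})$ is continuous on $\mathbb{R}$ as a difference of continuous functions.

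I do not anticipate a genuine obstacle here; the only point requiring a little care is making explicit that the update rule yields $\mathbf{T}^{(n)} \subseteq \mathbf{T}^{(n+1)}$ so that the monotonicity of $\max$/$\min$ over a growing index set can be invoked cleanly — everything else is a direct consequence of elementary properties of pointwise maxima and minima of Gaussians.
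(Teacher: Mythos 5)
Your proposal is correct and follows essentially the same route as the paper's own proof: positivity from the envelope inequality \eqref{eq:enveloppepi}, monotonicity from the nestedness $\mathbf{T}^{(n)} \subseteq \mathbf{T}^{(n+1)}$ combined with the max/min structure of \eqref{eq:compound_min}--\eqref{eq:compound_maj}, and continuity from finite pointwise maxima/minima of Gaussian-shaped functions. Your version is in fact slightly more explicit than the paper's (spelling out why the scale parameters are well-defined and why enlarging the index set moves the max and min in the right directions), which is a welcome level of care.
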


\begin{proof}
(i)
Let $n \in \mathbb{N}^*$. We have $\{t_m^{(n)} \}_{1 \leq m \leq T_n} \subset \{t_m^{(n+1)} \}_{1 \leq m \leq T_{n+1} }$, \cblue{that we denote shortly as $\mathbf{T}^{(n)} \subset \mathbf{T}^{(n+1)}$, with a slight abuse of notation}. Then, by definition of the piecewise bounds, we have
\begin{equation}
\forall x \in \mathbb{R}, \quad \pi(x) \leq \overline{C}(x;\mathbf{T}^{(n+1)})  \leq \overline{C}(x;\mathbf{T}^{(n)}),
\end{equation}
and
\begin{equation}
\forall x \in \mathbb{R}, \quad 
\underline{C}(x;\mathbf{T}^{(n)})  \leq \underline{C}(x;\mathbf{T}^{(n+1)}) \leq \pi(x).
\end{equation}
Therefore,
\begin{equation}
0 \leq h^{(n+1)}(x)  \leq h^{(n)}(x).
\end{equation}
Hence, we have the result. \\
(ii) 
We have that for every $n \in \mathbb{N}^*$, $\overline{C}(\cdot;\mathbf{T}^{(n)})$ and $\underline{C}(\cdot;\mathbf{T}^{(n)})$ are continuous in $\mathbb{R}$. Since they are respectively the max and the min of Gaussian functions, and we know that the max and the min are operators that maintain continuity.\\
Hence we have the result.
\end{proof}

\begin{corollary}
Let, for every \( n \in \mathbb{N} \),
\begin{equation}
    G^{(n)} : = \overline{\mathcal{I}}^{(n)} - \underline{\mathcal{I}}^{(n)} = \int_{\bR}|f(x)|h^{(n)}(x)dx
    \label{eq:G_n}
\end{equation}
where \( \overline{\mathcal{I}}^{(n)} \) and \( \underline{\mathcal{I}}^{(n)} \) are defined in Eqs.~\ref{eq:ub_I} and \ref{eq:lb_I}, respectively. The following properties hold:
\begin{enumerate}
    \item[(i)] The sequence \( (G^{(n)})_{n \in \mathbb{N}^*} \) is positive-valued and monotonically decreasing.
    \item[(ii)] There exists a function \( h : \mathbb{R} \to \mathbb{R}^+ \) such that the sequence \( (h^{(n)})_{n \in \mathbb{N}^*} \), defined as in Eq.~\ref{eq:h_n}, converges pointwise to \( h \).
    \label{corollary_limit_h(n)}
\end{enumerate}
\end{corollary}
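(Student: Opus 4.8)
The plan is to deduce both items directly from the previous Lemma, using only elementary facts about monotone sequences and monotone convergence of integrals. The key observation is that the identity $G^{(n)} = \int_{\bR} |f(x)| h^{(n)}(x)\,dx$ claimed in \eqref{eq:G_n} must first be verified: from \eqref{eq:ub_I}–\eqref{eq:lb_I}, $\overline{\Ic}^{(n)} - \underline{\Ic}^{(n)} = \int_{\bR} f^+(x)\big(\overline{C}(x;\mathbf{T}^{(n)}) - \underline{C}(x;\mathbf{T}^{(n)})\big)\,dx + \int_{\bR} f^-(x)\big(\overline{C}(x;\mathbf{T}^{(n)}) - \underline{C}(x;\mathbf{T}^{(n)})\big)\,dx$, and since $f^+ + f^- = |f|$ and both envelopes multiply the same difference $h^{(n)}$, this collapses to $\int_{\bR} |f(x)| h^{(n)}(x)\,dx$. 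One should note in passing that this integral is finite: $h^{(n)} \leq \overline{C}(\cdot;\mathbf{T}^{(n)})$, which is a finite minimum of unnormalized Gaussians, so $|f| h^{(n)}$ is dominated by $|f|$ times a Gaussian, whose integral converges because $f$ is a monomial.

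For item (i), I would argue as follows. By part (i) of the preceding Lemma, for every fixed $x \in \bR$ we have $0 \leq h^{(n+1)}(x) \leq h^{(n)}(x)$. Multiplying by the nonnegative weight $|f(x)|$ preserves these inequalities pointwise, and integrating over $\bR$ (integrals of nonnegative measurable functions being monotone) yields $0 \leq G^{(n+1)} \leq G^{(n)}$. Hence $(G^{(n)})_{n \in \bN^*}$ is positive-valued and monotonically decreasing, which is the claim; in particular it is convergent as a bounded monotone sequence, though only monotonicity is asserted here.

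For item (ii), I would invoke the monotone convergence structure established in part (i) of the Lemma at the pointwise level: for each fixed $x$, $(h^{(n)}(x))_{n \in \bN^*}$ is a nonincreasing sequence bounded below by $0$, hence it converges to a limit, which I call $h(x) \triangleq \inf_{n \in \bN^*} h^{(n)}(x) = \lim_{n \to \infty} h^{(n)}(x) \geq 0$. This defines a function $h : \bR \to \bR^+$, and by construction $(h^{(n)})_{n \in \bN^*}$ converges to $h$ pointwise on $\bR$. No regularity of $h$ is claimed at this stage, so nothing further is needed.

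I do not anticipate a genuine obstacle in this corollary — it is a bookkeeping consequence of the Lemma. The only point requiring a little care is justifying that the algebraic manipulation turning $\overline{\Ic}^{(n)} - \underline{\Ic}^{(n)}$ into $\int |f| h^{(n)}$ is legitimate, i.e., that all the integrals involved are finite so that the difference of integrals equals the integral of the difference; this is where the monomial assumption on $f$ and the Gaussian tail of the envelopes are used, and it also sets up the sharper tail estimates that Lemma~\ref{lemm:compact} will need.
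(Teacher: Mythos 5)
Your proposal is correct and follows essentially the same route as the paper: part (i) is obtained by multiplying the pointwise inequalities $0 \leq h^{(n+1)}(x) \leq h^{(n)}(x)$ from the preceding Lemma by $|f(x)|$ and integrating, and part (ii) is the standard fact that a nonincreasing sequence bounded below by zero converges pointwise. Your explicit verification of the identity $G^{(n)} = \int_{\mathbb{R}} |f(x)|\,h^{(n)}(x)\,dx$ via $f^+ + f^- = |f|$, and the remark on finiteness of the integrals, are details the paper leaves implicit but do not change the argument.
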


\begin{proof}
(i) Let $n \in \mathbb{N}^*$. By Lemma \ref{ref:lemmadecrease_i},
\begin{equation}
0 \leq \int_{\mathbb{R}} |f(x)| \left(\overline{C}(x;\mathbf{T}^{(n+1)}) - \underline{C}(x;\mathbf{T}^{(n+1)})\right) dx \leq 
\int_{\mathbb{R}} |f(x)| \left(\overline{C}(x;\mathbf{T}^{(n)}) - \underline{C}(x;\mathbf{T}^{(n)})\right) dx,
\end{equation}
hence the result. \\

(ii) By Lemma \ref{ref:lemmadecrease_i}, we know that, for every $x \in \mathbb{R}$, sequence $(h^{(n)}(x))_{n \in \mathbb{N}^*}$ is positive valued and monotonically decreasing. Consequently, there exists $h : \mathbb{R} \mapsto \mathbb{R}^+$ such that, for every $x \in \mathbb{R}$, $\lim_{n \rightarrow \infty} h^{(n)}(x) = h(x)$, hence the result.  
\end{proof}

\begin{lemma} \label{lemm:compact}
Set $\mathbf{T}^{(1)} = \{t_1\}$ where $t_1 \in \bR$. \cblue{Let $f$ read as in \eqref{eq:monomial}, with monomial power $k \in \mathbb{N}$.} 
We have the following statements. 
\begin{enumerate}
    \item[(i)] For every $n \in \mathbb{N}^*$:   
    \begin{equation}
    \label{eq_fplus}
         (\forall x \in \mathbb{R}) \ (\forall s \in \mathbb{R}^+) \quad f^+(x) h^{(n)}(x) \leq \widetilde{C}(s) g(x; \widetilde{\mu}(s), \widetilde{\sigma}(s)),
    \end{equation}
    \begin{equation}
    \label{eq_fmoins}
        (\forall x \in \mathbb{R}) \ (\forall s \in \mathbb{R}^-) \quad f^-(x) h^{(n)}(x) \leq \widehat{C}(s) g(x; \widehat{\mu}(s), \widehat{\sigma}(s)),
    \end{equation}  
with $(h^{(n)})_{n\in\mathbb{N}^*}$ defined in~\eqref{eq:h_n} and

   \begin{equation}
(\forall s > 0) \quad
\begin{cases}
\label{eq:widetildesigma}
\widetilde{\sigma}(s) & = \overline{\sigma}(t_1),\\
\widetilde{\mu}(s) & = \overline{\mu}(t_1)+\frac{\sigma^2(t_1)k}{s} \\
\widetilde{C}(s) & = \overline{C}(t_1)\, \frac{\widetilde{\sigma}(s)}{\overline{\sigma}(t_1)}\,
\exp\!\left(
- \frac{(s-\overline{\mu}(t_1))^2}{2\,\overline{\sigma}(t_1)^2}
+ k\ln(s)
+ \frac{\widetilde{\sigma}(s)^2}{2}\left(\frac{s-\overline{\mu}(t_1)}{\overline{\sigma}(t_1)^2} - \frac{k}{s}\right)^{\!2}
\right).
\end{cases}
\end{equation}

\begin{equation}
\label{eq:widehatsigma}
(\forall s < 0) \quad
\begin{cases}
\widehat{\sigma}(s) &= \overline{\sigma}(t_1), \\
\widehat{\mu}(s) &= \overline{\mu}(t_1)+\frac{\sigma^2(t_1)k}{s} \\
\widehat{C}(s) &= \overline{C}(t_1)\, \frac{\widehat{\sigma}(s)}{\overline{\sigma}(t_1)}\,
\exp\!\left(
- \frac{(-s+\overline{\mu}(t_1))^2}{2\,\overline{\sigma}(t_1)^2}
+ k \ln(-s)
+ \frac{\widehat{\sigma}(s)^2}{2}\left(\frac{-s+\overline{\mu}(t_1)}{\overline{\sigma}(t_1)^2} + \frac{k}{s}\right)^{\!2}
\right).
\end{cases}
\end{equation}

    where $\overline{\mu}(t_1)$, $\overline{\sigma}(t_1)$ and $\overline{C}(t_1)$ are defined in \eqref{lem:ass2}.

{\color{blue}
    \item[(ii)] Let $\varepsilon>0$.
For every $s>0$, define
\begin{equation}
\label{eq:size_compact_f+}
\widetilde{K}_{\varepsilon}(s)
=
\left[0,\max\left(0,\widetilde{\mu}(s)+\widetilde{\sigma}(s)\sqrt{\frac{2\widetilde{C}(s)}{\varepsilon}}\right)\right].
\end{equation}
Then, for every $n\in\mathbb{N}^*$,
\begin{equation}
\int_{\mathbb{R}\setminus \widetilde{K}_{\varepsilon}(s)} f^+(x)h^{(n)}(x)\,dx
\le \frac{\varepsilon}{2}.
\end{equation}

Similarly, for every $s<0$, define
\begin{equation}
\label{eq:size_compact_f-}
\widehat{K}_{\varepsilon}(s)
=
\left[\min\left(0,\widehat{\mu}(s)-\widehat{\sigma}(s)\sqrt{\frac{2\widehat{C}(s)}{\varepsilon}}\right),0\right].
\end{equation}
Then, for every $n\in\mathbb{N}^*$,
\begin{equation}
\int_{\mathbb{R}\setminus \widehat{K}_{\varepsilon}(s)} f^-(x)h^{(n)}(x)\,dx
\le \frac{\varepsilon}{2}.
\end{equation}
}
\end{enumerate}
 
\begin{proof}
\begin{enumerate}
    \item[(i)]
Let $n \in \mathbb{N}^*$. By Eq.~\eqref{ref:lemmadecrease_i}, $(h^{(n)})_{n\in\mathbb{N}^*}$ is decreasing, hence
\begin{equation}
\begin{aligned}
(\forall x \in \mathbb{R}) \quad f^+(x) h^{(n)}(x) 
&\leq f^+(x) h^{(1)}(x) \\
&\leq f^+(x)\big(\overline{C}(x;\mathbf{T}^{(1)})-\underline{C}(x;\mathbf{T}^{(1)})\big) \\
&\leq f^+(x)\,\overline{C}(t_1)\,g\!\left(x;\overline{\mu}(t_1),\overline{\sigma}(t_1)\right).
\end{aligned}
\end{equation}
Using \eqref{eq:monomial},
\begin{equation}
(\forall x \in \mathbb{R})\quad 
f^+(x) h^{(n)}(x) \leq \max\!\Big(0,\ \overline{C}(t_1)\,x^k\, g\!\left(x;\overline{\mu}(t_1),\overline{\sigma}(t_1)\right)\Big),
\end{equation}
that is, for $x>0$,
\begin{equation}
\overline{C}(t_1)\,x^k\, g\!\left(x;\overline{\mu}(t_1),\overline{\sigma}(t_1)\right)
=\overline{C}(t_1)\,\frac{1}{\sqrt{2\pi}\,\overline{\sigma}(t_1)}\,
\exp\!\left(-\phi_+(x)\right),
\end{equation}
with
\begin{equation}
\phi_+(x)=\frac{1}{2\,\overline{\sigma}(t_1)^2}\big(x-\overline{\mu}(t_1)\big)^2 - k\ln x
\qquad (x>0).
\end{equation}
Since
\begin{equation}
\phi_+''(x)=\frac{1}{\overline{\sigma}(t_1)^2}+\frac{k}{x^2}\ \ge\ \frac{1}{\overline{\sigma}(t_1)^2}\quad (x>0),
\end{equation}
$\phi_+$ satisfies Assumption~\ref{assump2} with the (constant) modulus $\nu\equiv 1/\overline{\sigma}(t_1)^2$. By Lemma~\ref{lem:ass2}, for every $s>0$,
\begin{equation}
(\forall x\in\mathbb{R})\qquad
f^+(x) h^{(n)}(x) \le \widetilde{C}(s)\, g\!\left(x;\widetilde{\mu}(s),\widetilde{\sigma}(s)\right),
\end{equation}
with the parameters given in \eqref{eq:widetildesigma}, namely
\begin{equation}
\widetilde{\sigma}(s)=\overline{\sigma}(t_1),\qquad
\widetilde{\mu}(s)=s-\widetilde{\sigma}(s)^2\!\left(\frac{s-\overline{\mu}(t_1)}{\overline{\sigma}(t_1)^2}-\frac{k}{s}\right)=\overline{\mu}(t_1)+\frac{\sigma^2(t_1)k}{s},
\end{equation}
\begin{equation}
\widetilde{C}(s)=\overline{C}(t_1)\,\frac{\widetilde{\sigma}(s)}{\overline{\sigma}(t_1)}\,
\exp\!\left(-\frac{(s-\overline{\mu}(t_1))^2}{2\,\overline{\sigma}(t_1)^2}
+ k\ln s
+ \frac{\widetilde{\sigma}(s)^2}{2}\left(\frac{s-\overline{\mu}(t_1)}{\overline{\sigma}(t_1)^2}-\frac{k}{s}\right)^{\!2}\right).
\end{equation}

For the negative part, parametrize the negative support by $y=-x>0$. For $x\le 0$, $f^-(x)=(-x)^k$, and
\begin{equation}
\overline{C}(t_1)\,(-x)^k\, g\!\left(x;\overline{\mu}(t_1),\overline{\sigma}(t_1)\right)
=\overline{C}(t_1)\,y^k\, g\!\left(-y;\overline{\mu}(t_1),\overline{\sigma}(t_1)\right)
=\overline{C}(t_1)\,\frac{1}{\sqrt{2\pi}\,\overline{\sigma}(t_1)}\,\exp\!\left(-\phi_-(y)\right),
\end{equation}
with
\begin{equation}
\phi_-(y)=\frac{1}{2\,\overline{\sigma}(t_1)^2}\big(y+\overline{\mu}(t_1)\big)^2 - k\ln y \qquad (y>0).
\end{equation}
Since
\begin{equation}
\phi_-''(y)=\frac{1}{\overline{\sigma}(t_1)^2}+\frac{k}{y^2}\ \ge\ \frac{1}{\overline{\sigma}(t_1)^2}\quad (y>0),
\end{equation}
$\phi_-$ also satisfies Assumption~\ref{assump2} with $\nu\equiv 1/\overline{\sigma}(t_1)^2$. By Lemma~\ref{lem:ass2}, for every $u>0$,
\begin{equation}
y^k\, g\!\left(-y;\overline{\mu}(t_1),\overline{\sigma}(t_1)\right)
\le \widehat{C}(u)\, g\!\left(y;\widehat{\mu}_+(u),\widehat{\sigma}(u)\right),
\end{equation}
with
\begin{equation}
\widehat{\sigma}(u)=\overline{\sigma}(t_1),\qquad
\widehat{\mu}_+(u)=u-\widehat{\sigma}(u)^2\,\phi_-'(u)
= u-\widehat{\sigma}(u)^2\,\!\left(\frac{u+\overline{\mu}(t_1)}{\overline{\sigma}(t_1)^2}-\frac{k}{u}\right)=-\overline{\mu}(t_1)+\frac{\overline{\sigma}(t_1)^2k}{u},
\end{equation}
\begin{equation}
\widehat{C}(u)=\overline{C}(t_1)\,\frac{\widehat{\sigma}(u)}{\overline{\sigma}(t_1)}\,
\exp\!\left(-\frac{(u+\overline{\mu}(t_1))^2}{2\,\overline{\sigma}(t_1)^2}
+ k\ln u
+ \frac{\widehat{\sigma}(u)^2}{2}\left(\frac{u+\overline{\mu}(t_1)}{\overline{\sigma}(t_1)^2}-\frac{k}{u}\right)^{\!2}\right).
\end{equation}
Returning to $x=-y$ and using $g(-x;\mu,\sigma)=g(x;-\mu,\sigma)$, and reindexing by $s:=-u<0$ (so $-s>0$), we obtain
\begin{equation}
(\forall s<0)\ (\forall x\in\mathbb{R})\qquad
f^-(x) h^{(n)}(x) \le \widehat{C}(s)\, g\!\left(x;\widehat{\mu}(s),\widehat{\sigma}(s)\right),
\end{equation}
with the parameters given in \eqref{eq:widehatsigma}, namely
\begin{equation}
\widehat{\sigma}(s)=\overline{\sigma}(t_1),\qquad
\widehat{\mu}(s)=-\,\widehat{\mu}_+(-s)
= \overline{\mu}(t_1)+\frac{\overline{\sigma}(t_1)^2k}{s},
\end{equation}
\begin{equation}
\widehat{C}(s)=\overline{C}(t_1)\,\frac{\widehat{\sigma}(s)}{\overline{\sigma}(t_1)}\,
\exp\!\left(-\frac{(-s+\overline{\mu}(t_1))^2}{2\,\overline{\sigma}(t_1)^2}
+ k\ln(-s)
+ \frac{\widehat{\sigma}(s)^2}{2}\left(\frac{-s+\overline{\mu}(t_1)}{\overline{\sigma}(t_1)^2}+\frac{k}{s}\right)^{\!2}\right).
\end{equation}
This concludes the proof of (i).

    \item[(ii)]
    {\color{blue}
    Let $\varepsilon>0$.
    
    \textbf{Case of $f^+$:} Fix $s>0$. From the proof of item (i), for every $n\in\mathbb N^*$ and every $x\in\mathbb R$,
\begin{equation}
f^+(x)h^{(n)}(x)\le \widetilde C(s)\, g(x;\widetilde\mu(s),\widetilde\sigma(s)).
\end{equation}
    }

    From the Bienaymé-Chebyshev inequality \cite{BertsekasTsitsiklis}, that 
    \begin{equation} \label{eq:Bienayme_Chebyshev_basic}
   (\forall b > 0) \quad \mathrm{P}(|X - E(X)| \geq b) \leq \frac{\mathrm{Var}(X)}{b^2}
    \end{equation}
    with $X$ a random variable. Let us set
    \begin{equation}
     \quad X \sim \mathcal{N}(\widetilde{\mu}(s), \widetilde{\sigma}(s)^2)
    \end{equation}
    Hence \eqref{eq:Bienayme_Chebyshev_basic} yields:
    \begin{equation} \label{eq:Bienayme_Chebyshev_gaussian_f+1}
     \mathrm{P}(|X - \widetilde{\mu}(s)| \geq b) \leq \frac{\widetilde{\sigma}(s)^2}{b^2}
    \end{equation}
Let us now choose $b > 0$ such that $\frac{\varepsilon}{2\widetilde{C}(s)} = \frac{\widetilde{\sigma}(s)^2}{b^2}$, i.e.
    \begin{equation}
    b = \sqrt{\frac{2 \widetilde{\sigma}(s)^2 \widetilde{C}(s)}{\varepsilon}}
    \end{equation}
    By substituting $b$ in \eqref{eq:Bienayme_Chebyshev_gaussian_f+1}, we have:
    \begin{equation} \label{eq:Bienayme_Chebyshev_gaussian_f+2}
     \mathrm{P}(|X - \widetilde{\mu}(s)| \geq b) \leq \frac{\varepsilon}{2\widetilde{C}(s)}.
    \end{equation}    
    Therefore, 
    \begin{equation}
    \label{eq:cheby_principle}
    \int_{\mathbb{R} \setminus [\widetilde{\mu}(s) - b, \widetilde{\mu}(s) + b]}  g(x; \widetilde{\mu}(s), \widetilde{\sigma}(s)) \, dx \leq \frac{\varepsilon}{2\widetilde{C}(s)}.
    \end{equation} 

  As $f^+(x) = 0$ for $x \in \mathbb{R}^-$, we define the compact set $\widetilde{K}_{\varepsilon}(s) = [0, \max(0, \widetilde{\mu}(s) + b)]$. To prove the bound on the integral, we distinguish two cases based on the sign of the upper bound $\widetilde{\mu}(s) + b$:

\begin{itemize}
    \item \textbf{Case 1: $\widetilde{\mu}(s) + b > 0$.} 
    In this case, $\widetilde{K}_{\varepsilon}(s) = [0, \widetilde{\mu}(s) + b]$.
    \begin{itemize}
        \item If $\widetilde{\mu}(s) - b \geq 0$, then $[\widetilde{\mu}(s) - b, \widetilde{\mu}(s) + b] \subset [0, \widetilde{\mu}(s) + b]$. Since the integrand is positive, the integral outside the larger set is smaller than the integral outside the smaller one:
        \begin{equation*}
        \int_{\mathbb{R} \setminus [0, \widetilde{\mu}(s) + b]} f^+(x)h^{(n)}(x) \, dx \leq \int_{\mathbb{R} \setminus [\widetilde{\mu}(s) - b, \widetilde{\mu}(s) + b]} \widetilde{C}(s) g(x; \widetilde{\mu}(s), \widetilde{\sigma}(s)) \, dx \leq \frac{\varepsilon}{2}.
        \end{equation*}
        \item If $\widetilde{\mu}(s) - b < 0$, then $\mathbb{R} \setminus [0, \widetilde{\mu}(s) + b] = (-\infty, 0) \cup (\widetilde{\mu}(s) + b, +\infty)$. On $(-\infty, 0)$, $f^+(x) = 0$. On $(\widetilde{\mu}(s) + b, +\infty)$, the region is contained in the upper tail of the Gaussian outside the Chebyshev interval. Thus, the integral is again bounded by $\varepsilon/2$.
    \end{itemize}

    \item \textbf{Case 2: $\widetilde{\mu}(s) + b \leq 0$.}
    Then $\widetilde{K}_{\varepsilon}(s) = \{0\}$. Since $f^+(x) = 0$ for $x \leq 0$, we only consider the integral over $(0, +\infty)$. Given that $0 \geq \widetilde{\mu}(s) + b$, the interval $(0, +\infty)$ is entirely contained in the upper tail $(\widetilde{\mu}(s) + b, +\infty)$. Thus, the integral is bounded by the mass of the Gaussian majorant outside the Chebyshev interval, which is $\leq \varepsilon/2$.
    
\end{itemize}

In all cases, $\int_{\mathbb{R} \setminus \widetilde{K}_{\varepsilon}(s)} f^+(x)h^{(n)}(x) \, dx \leq \varepsilon/2$.

\textbf{Case of $f^-$:} Fix $s<0$, the same reasoning applies by symmetry. By using the change of variables $y = -x$, the fact that $f^-(x) = 0$ for $x > 0$, and the Gaussian bound from (i), we define $\widehat{K}_{\varepsilon}(s) = [\min(0, \widehat{\mu}(s) - b), 0]$. By proceeding exactly as above for the lower tail of the Gaussian, we obtain $\int_{\mathbb{R} \setminus \widehat{K}_{\varepsilon}(s)} f^-(x)h^{(n)}(x) \, dx \leq \varepsilon/2$.

\end{enumerate}
\end{proof}
\end{lemma}

\subsection{Convergence theorem}
\cblue{
In this section, we prove that the approximation error $G^{(n)}$ vanishes, provided that the sequence of tangency grids $(\mathbf{T}^{(n)})_{n\ge 1}$ satisfies a so-called \emph{compact refinement property}. To do so, we control the error both on the tails and on a suitable compact region. 
}

\cblue{
\begin{theorem}[Convergence of the approximation integral]
\label{th1:convergence}
Let $\varepsilon > 0$, and $s_0 > 0$, $s_1 < 0$. \cblue{Let $f$ reads as in \eqref{eq:monomial}, with monomial power $k \in \mathbb{N}$.} Following the notation of Lemma \ref{lemm:compact}, set $K^+= \widetilde{K}_\varepsilon(s_0)$, $K^-=\widehat{K}_\varepsilon(s_1)$, and $K= K^- \cup K^+$. Assume that the sequence of grids $(\mathbf{T}^{(n)})_{n\ge 1}$ is constructed such that for all $n\ge1$, $\mathbf{T}^{(n)} \subset K$, with $\mathbf{T}^{(1)}=\{t_1\}$ for some $t_1\in K$, and that the sequence refines $K$ in the following sense:
\begin{equation}\label{eq:H-net_thm}
\eta_n(K) := \sup_{x\in K}\inf_{t\in\mathbf{T}^{(n)}}|x-t| \xrightarrow[n\to\infty]{} 0.
\end{equation}
Then, for every $n\in\mathbb{N}^*$, the following \emph{outside-compact} bounds hold:
\begin{equation} \label{eq:int_outside_compact_f+}
\int_{\mathbb{R} \setminus K^+} f^+(x)\, h^{(n)}(x) \, dx \leq \frac{\varepsilon}{2},
\end{equation}
\begin{equation} \label{eq:int_outside_compact_f-}
\int_{\mathbb{R} \setminus K^-} f^-(x)\, h^{(n)}(x) \, dx \leq \frac{\varepsilon}{2}.
\end{equation}
Moreover, there exists $n_0 \in \mathbb{N}^*$ such that, for all $n \ge n_0$,
\begin{equation}
\label{eq:global_G_bound}
\left| G^{(n)} \right| \leq \varepsilon \left(1 + \widetilde{\mu}(s_0) + |\widehat{\mu}(s_1)| + \sqrt{\frac{2 \widehat{\sigma}(s_1)^2 \widehat{C}(s_1)}{\varepsilon}} + \sqrt{\frac{2 \widetilde{\sigma}(s_0)^2 \widetilde{C}(s_0)}{\varepsilon}} \right).
\end{equation}
In particular, $\lim_{n \rightarrow \infty} G^{(n)} = 0$,  $\lim_{n \rightarrow \infty} \overline{\mathcal{I}}^{(n)} = \mathcal{I}$, and $\lim_{n \rightarrow \infty} \underline{\mathcal{I}}^{(n)} = \mathcal{I}$.
\end{theorem}
\begin{proof}
Fix $\varepsilon>0$ and $s_0>0$, $s_1<0$, and define $K^+=\widetilde K_\varepsilon(s_0)$, $K^-=\widehat K_\varepsilon(s_1)$, and $K=K^-\cup K^+$ as in the statement. The bounds \eqref{eq:int_outside_compact_f+} and \eqref{eq:int_outside_compact_f-} follow directly from Lemma~\ref{lemm:compact} by construction of $K^+$ and $K^-$. These estimates already provide the desired control on the tails of the integral. It remains to bound the error on the compact core $K$. By hypothesis, the elements of $(\mathbf{T}^{(n)})_{n\ge 1}$ belong to $K$ and refine it (i.e., $\eta_n(K) \to 0$). \\
\smallskip
We now detail the argument to bound the integral of $x \mapsto f^+(x)h^{(n)}(x)$ on $K^+$. The reasoning for $f^-$ on $K^-$ follows exactly the same logic. Recall that for all $x \in \mathbb{R}$,
\begin{equation}
h^{(n)}(x)=\overline C(x;\mathbf T^{(n)})-\underline C(x;\mathbf T^{(n)}),
\end{equation}
with
\begin{equation}
\overline C(x;\mathbf T^{(n)})=\min_{t\in \mathbf T^{(n)}} \overline b(x;t),
\qquad
\underline C(x;\mathbf T^{(n)})=\max_{t\in \mathbf T^{(n)}} \underline b(x;t).
\end{equation}
Let us first show that the integral of $x\mapsto f^+(x)h^{(n)}(x)$ on $K^+$ can be arbitrarily small for $n$ large enough.
%
\smallskip
\noindent\textbf{Main argument, assuming uniform equicontinuity.}
Let $\gamma>0$. Let assume that \begin{equation}
\mathcal F^+:=\{x\mapsto f^+(x)h^{(n)}(x)\}_{n\ge 1}
\end{equation}
is uniformly equicontinuous on $K$. Since $K^+\subset K$, the family $\mathcal F^+$ is also uniformly equicontinuous on $K^+$.
Then, there exists $\lambda>0$ such that for all $n\ge 1$ and all $(x,y)\in (K^+)^2$,
\begin{equation}
|x-y|<\lambda
\quad\Longrightarrow\quad
|f^+(x)h^{(n)}(x)-f^+(y)h^{(n)}(y)|<\gamma.
\label{eq:equicont_assumed_thm2}
\end{equation}
By \eqref{eq:H-net_thm}, there exists $n_{+}\in\mathbb N^*$ such that
\begin{equation}
\eta_n(K)<\lambda,
\qquad \forall n\ge n_{+}.
\end{equation}
Since $K^+\subset K$, we have
\begin{equation}
\sup_{x\in K^+}\inf_{t\in\mathbf{T}^{(n)}}|x-t|<\lambda,
\qquad \forall n\ge n_{+}.
\end{equation}
Therefore, for every $n\ge n_{+}$ and every $x\in K^+$, there exists $t\in\mathbf T^{(n)}$ such that
\begin{equation}
|x-t|<\lambda.
\end{equation}
Since $t\in \mathbf T^{(n)}$, the tangency/interpolation property gives
\begin{equation}
h^{(n)}(t)=0.
\end{equation}
Applying \eqref{eq:equicont_assumed_thm2} with $y=t$, we obtain
\begin{equation}
|f^+(x)h^{(n)}(x)|
=
|f^+(x)h^{(n)}(x)-f^+(t)h^{(n)}(t)|
<
\gamma.\end{equation}
Since the above holds for every $x\in K^+$, we deduce
\begin{equation}
\sup_{x\in K^+} f^+(x)h^{(n)}(x)\le \gamma,
\qquad \forall n\ge n_{+}.
\label{eq:sup_inside_compact_gamma_thm2}
\end{equation}
Taking $\gamma=\varepsilon$, we get
\begin{equation}
\sup_{x\in K^+} f^+(x)h^{(n)}(x)\le \varepsilon,
\qquad \forall n\ge n_{+}.
\label{eq:sup_inside_compact_thm2}
\end{equation}
Therefore,
\begin{equation}
\int_{K^+} f^+(x)h^{(n)}(x)\,dx
\le
\varepsilon\,\mathrm{Leb}(K^+),
\qquad \forall n\ge n_{+}.
\label{eq:int_inside_compact_thm2}
\end{equation}
Using Lemma~\ref{lemm:compact} (see \eqref{eq:size_compact_f+}), we have
$$
\mathrm{Leb}(K^+)
=
\widetilde{\mu}(s_0)+\sqrt{\frac{2\widetilde{\sigma}(s_0)^2\widetilde{C}(s_0)}{\varepsilon}},
$$
which yields
\begin{equation}\label{eq:int_f+_thm2}
\int_{\mathbb{R}} f^+(x)\, h^{(n)}(x) \, dx
\le
\frac{\varepsilon}{2}
+
\varepsilon\left(\widetilde{\mu}(s_0)+\sqrt{\frac{2\widetilde{\sigma}(s_0)^2\widetilde{C}(s_0)}{\varepsilon}}\right),
\qquad \forall n\ge n_{+}.
\end{equation}
The same argument applied on $K^-$ gives the following estimate, provided that the family
$$
\mathcal F^-:=\{x\mapsto f^-(x)h^{(n)}(x)\}_{n\ge 1}
$$
is uniformly equicontinuous on $K$. More precisely, there exists $n_-\in\mathbb N^*$ such that, for all $n\ge n_-$,
$$
\int_{\mathbb R} f^-(x)h^{(n)}(x)\,dx
\le
\frac{\varepsilon}{2}
+
\varepsilon\left(
|\widehat{\mu}(s_1)|
+
\sqrt{\frac{2\widehat{\sigma}(s_1)^2\widehat C(s_1)}{\varepsilon}}
\right).
$$
Let $n_0=\max(n_+,n_-)$. Combining the estimates for the positive and negative parts, we obtain, for all $n\ge n_0$,
$$
|G^{(n)}|
\le
\varepsilon
\left(
1
+
\widetilde{\mu}(s_0)
+
|\widehat{\mu}(s_1)|
+
\sqrt{\frac{2\widetilde{\sigma}(s_0)^2\widetilde C(s_0)}{\varepsilon}}
+
\sqrt{\frac{2\widehat{\sigma}(s_1)^2\widehat C(s_1)}{\varepsilon}}
\right).
$$
This proves \eqref{eq:global_G_bound}, assuming the uniform equicontinuity of the two families introduced above. It remains to justify this property.
\smallskip
\noindent\textbf{Verification of the uniform equicontinuity property.} We now prove that the families
$$
\mathcal F^+
:=
\{x\mapsto f^+(x)h^{(n)}(x)\}_{n\ge 1}
\quad\text{and}\quad
\mathcal F^-
:=
\{x\mapsto f^-(x)h^{(n)}(x)\}_{n\ge 1}
$$
are uniformly equicontinuous on $K$. Since $K$ is compact and the functions $\nu(\cdot)$ and $\beta(\cdot)$ are continuous and strictly positive on $K$, there exist constants $0<\sigma_{\min}\le \sigma_{\max}<\infty$ such that
\begin{equation}
\underline\sigma(t),\overline\sigma(t)\in [\sigma_{\min},\sigma_{\max}],
\qquad \forall t\in K.
\end{equation}
Moreover, since $\phi$ and $\dot\phi$ are continuous, the maps
\begin{equation}
t\mapsto \underline\mu(t),\quad
t\mapsto \overline\mu(t),\quad
t\mapsto \underline C(t),\quad
t\mapsto \overline C(t)
\end{equation}
are continuous on $K$, hence bounded. Therefore, the gradient maps
\begin{equation}
(x,t)\mapsto \nabla_x \overline b(x;t)
=
-\overline C(t)\frac{x-\overline\mu(t)}{\overline\sigma(t)^2}\,g(x;\overline\mu(t),\overline\sigma(t))
\end{equation}
and
\begin{equation}
(x,t)\mapsto \nabla_x \underline b(x;t)
=
-\underline C(t)\frac{x-\underline\mu(t)}{\underline\sigma(t)^2}\,g(x;\underline\mu(t),\underline\sigma(t))
\end{equation}
are continuous on $K\times K$. Since $K\times K$ is compact, they are bounded on this set. Hence there exists a constant $L_K>0$, depending only on $K$, such that for all $t\in K$ and all $(x,y)\in K^2$,
\begin{equation}
|\overline b(x;t)-\overline b(y;t)|\le L_K|x-y|,
\qquad
|\underline b(x;t)-\underline b(y;t)|\le L_K|x-y|.
\label{eq:Lip_b_thm2}
\end{equation}
In particular, for every $n\ge 1$, since $\mathbf T^{(n)}\subset K$, the families
\begin{equation}
\{x\mapsto \overline b(x;t)\}_{t\in \mathbf T^{(n)}}
\qquad\text{and}\qquad
\{x\mapsto \underline b(x;t)\}_{t\in \mathbf T^{(n)}}
\end{equation}
consist of functions that are all $L_K$-Lipschitz on $K$, with the same constant independent of $n$. Applying Lemma~\ref{lemm:max_min_lip_function} from Appendix C, we obtain that
$
\overline C(\cdot;\mathbf T^{(n)})
\,\text{and}\,
\underline C(\cdot;\mathbf T^{(n)})
$
are $L_K$-Lipschitz on $K$ for every $n\ge 1$. Consequently,
\begin{equation}
h^{(n)}=\overline C(\cdot;\mathbf T^{(n)})-\underline C(\cdot;\mathbf T^{(n)})
\end{equation}
is $2L_K$-Lipschitz on $K$, namely,
\begin{equation}
|h^{(n)}(x)-h^{(n)}(y)|\le 2L_K|x-y|,
\qquad \forall (x,y)\in K^2,\ \forall n\ge 1.
\label{eq:Lip_h_thm2}
\end{equation}
On the other hand, the maps
\begin{equation}
(x,t)\mapsto \underline b(x;t)
\qquad\text{and}\qquad
(x,t)\mapsto \overline b(x;t)
\end{equation}
are continuous on $K\times K$, hence bounded. Therefore, there exists $M_b>0$ such that
\begin{equation}
|\underline b(x;t)|\le M_b,
\qquad
|\overline b(x;t)|\le M_b,
\qquad \forall (x,t)\in K\times K.
\end{equation}
Since, for each $n\ge 1$, the functions $\underline C(\cdot;\mathbf T^{(n)})$ and $\overline C(\cdot;\mathbf T^{(n)})$ are obtained as pointwise maxima and minima of these families, it follows that
\begin{equation}
\|\underline C(\cdot;\mathbf T^{(n)})\|_{\infty,K}\le M_b,
\qquad
\|\overline C(\cdot;\mathbf T^{(n)})\|_{\infty,K}\le M_b,
\qquad \forall n\ge 1.
\end{equation}
Hence
\begin{equation}
\|h^{(n)}\|_{\infty,K}
\le
\|\underline C(\cdot;\mathbf T^{(n)})\|_{\infty,K}
+
\|\overline C(\cdot;\mathbf T^{(n)})\|_{\infty,K}
\le 2M_b,
\qquad \forall n\ge 1.
\end{equation}
Therefore,
\begin{equation}
M_h:=\sup_{n\ge 1}\|h^{(n)}\|_{\infty,K}<\infty.
\end{equation}
Since $f^+$ is continuous on the compact set $K$, it is uniformly continuous and bounded on $K$. Set
\begin{equation}
M_f:=\|f^+\|_{\infty,K}<\infty.
\end{equation}
Let $\gamma>0$. By uniform continuity of $f^+$ on $K$, there exists $\lambda_1>0$ such that
\begin{equation}
|x-y|<\lambda_1
\quad\Longrightarrow\quad
|f^+(x)-f^+(y)|<\frac{\gamma}{2M_h}.
\end{equation}
On the other hand, by \eqref{eq:Lip_h_thm2}, for all $n\ge 1$ and all $(x,y)\in (K)^2$,
\begin{equation}
|h^{(n)}(x)-h^{(n)}(y)|\le 2L_K|x-y|.
\end{equation}
Hence, choosing
\begin{equation}
\lambda_2:=\frac{\gamma}{4L_KM_f},
\end{equation}
we obtain
\begin{equation}
|x-y|<\lambda_2
\quad\Longrightarrow\quad
|h^{(n)}(x)-h^{(n)}(y)|<\frac{\gamma}{2M_f},
\qquad \forall n\ge 1.
\end{equation}
Let
\begin{equation}
\lambda:=\min(\lambda_1,\lambda_2).
\end{equation}
Then, for all $n\ge 1$ and all $(x,y)\in (K)^2$ such that $|x-y|<\lambda$,
\begin{equation}
\begin{aligned}
|f^+(x)h^{(n)}(x)-f^+(y)h^{(n)}(y)|
&\le |f^+(x)-f^+(y)|\,|h^{(n)}(x)| + |f^+(y)|\,|h^{(n)}(x)-h^{(n)}(y)| \\
&\le M_h\,|f^+(x)-f^+(y)| + M_f\,|h^{(n)}(x)-h^{(n)}(y)| \\
&< \gamma.
\end{aligned}
\end{equation}
Thus the family $\mathcal F^+$ is uniformly equicontinuous on $K$. The same argument, with $f^-$ in place of $f^+$, shows that the family $\mathcal F^-$ is also uniformly equicontinuous on $K$. This completes the proof.\\
\textbf{Convergence of the sequences:} Since \eqref{eq:global_G_bound} holds for any positive $\varepsilon$, the gap $G^{(n)}$ goes to $0$ when $n$ goes to infinity. Since, for every $n \geq 1$,
$$
G^{(n)}
=
\left(\overline{\mathcal{I}}^{(n)}-\mathcal{I}\right)
+
\left(\mathcal{I}-\underline{\mathcal{I}}^{(n)}\right),
$$
with both terms nonnegative, it follows that
$$
\overline{\mathcal{I}}^{(n)}\to \mathcal{I}
\qquad\text{and}\qquad
\underline{\mathcal{I}}^{(n)}\to \mathcal{I}.
$$
\end{proof}
}

\cblue{Let us formulate some important remarks, to clarify the scope of the above theoretical result.}

\cblue{
\begin{remark}
It is important to stress that the uniform equicontinuity property is crucial in the proof. Indeed, the argument on the compact set $K^+$ relies on the fact that, for every $\gamma>0$, one can choose a radius $\lambda>0$, \emph{independant from $n$}, such that
\begin{equation*}
|x-y|<\lambda
\quad\Longrightarrow\quad
|f^+(x)h^{(n)}(x)-f^+(y)h^{(n)}(y)|<\gamma,
\qquad \forall n\ge 1,\ \forall (x,y)\in (K^+)^2.
\end{equation*}
\end{remark}
\begin{remark}
\label{rk:dyadic}
The refinement assumption \eqref{eq:H-net_thm} is mild and is satisfied by many standard grid constructions. A canonical example is given by dyadic grids, which are also used in our numerical experiments. Indeed, let $K$ a non-empty compact subset of $\mathbb R$, and, for every $n\geq1$, $\mathbf T^{(n)}$ is chosen as the dyadic grid of mesh size $2^{-n}$ on an interval containing $K$, then \eqref{eq:H-net_thm} holds. More precisely, one has
\begin{equation}
(\forall n \geq 1) \quad \eta_n(K)\le C\,2^{-n},
\end{equation}
for some constant $C>0$ depending only on the embedding of $K$ into the ambient dyadic interval.
Moreover, 
if there exists $L_K>0$ such that the functions $\{x\mapsto f^+(x)h^{(n)}(x)\}_{n\ge1}$ are $L_K$-Lipschitz on $K$, then
\begin{equation}
(\forall n \geq 1) \quad \sup_{x\in K} f^+(x)h^{(n)}(x)\le L_K\,\eta_n(K).
\end{equation}
Consequently, for dyadic grids,
\begin{equation}
(\forall n \geq 1) \quad \sup_{x\in K} f^+(x)h^{(n)}(x)\le L_K C\,2^{-n},
\end{equation}
so that
\begin{equation}
(\forall n \geq 1) \quad \int_K f^+(x)h^{(n)}(x)\,dx \le L_K C\,\mathrm{Leb}(K)\,2^{-n}.
\end{equation}
The same reasoning applies to the negative part. Thus, on any prescribed compact set, dyadic refinement yields not only convergence but also an explicit geometric rate for the compact contribution to the approximation error.
\end{remark}
}

\begin{remark}
\label{remarkmonomial}
Theorem~\ref{th1:convergence} establishes the convergence of the bounds when $f$ is a monomial, i.e., $f$ takes the form \eqref{eq:monomial}. This result could actually be extended further: since any polynomial function is a finite linear combination of monomials, the same approximation procedure applies term-by-term, yielding convergence of the integral bounds for any polynomial integrand $f$. Interestingly, in such case, the moment computations in Algorithm~\ref{algo:tangency} can still be defined in a closed form, by decomposing the polynomial into a sum of monomials and using the recursive formulas from Appendix A. 
\cblue{
The restriction to monomials in Theorem~\ref{th1:convergence} allows explicit control of the tail contributions and, as such, it is a key ingredient in the proof of Lemma~\ref{lemm:compact}. The monomial structure would however no longer be essential to compute the integral $\int_{\mathcal{S}} f(x) \pi(x) dx$ within a compact set $\mathcal{S}$. In that setting, the proof would only require that, for some reference tangency point $t_0$, the function $x\mapsto \overline b(x;t_0)\,f(x)$ belongs to $L^1(\mathcal{S})$. Under such a local integrability assumption, using the same compact interpolation argument than in our proof of Lemma~\ref{lemm:compact}, we can shows that the integral gap computed on $\mathcal{S}$ vanishes over iterations (i.e., as the grid is refined).
}
\end{remark}

\subsection{From the convergence theorem to a practical algorithm} \label{sec:algopractical}

\cblue{
Theorem~\ref{th1:convergence} guarantees that the gap
$G^{(n)}=\overline{\mathcal I}^{(n)}-\underline{\mathcal I}^{(n)}$ tends to zero \emph{provided} the sequence of lists of tangency points $(\mathbf T^{(n)})_{n\ge 1}$ satisfies the refinement condition.
}
Algorithm~\ref{algo:tangency} alone does not, by design, enforce such a refinement property: repeatedly refining the current max--gap interval does not prevent other regions from being neglected, and therefore does not guarantee that $\eta_n(K)\to 0$ on every compact $K$.
To connect the theoretical refinement condition with a practical implementation, we complement Algorithm~\ref{algo:tangency} with an explicit dyadic candidate pool on a compact interval $[a,b]$. This yields a finite-resolution version of the refinement mechanism: the tangency points are selected from a prescribed dyadic set whose granularity can be controlled in advance.
Accordingly, Algorithm~\ref{algo:dyadic_set} constructs a dyadic candidate set used to discretize the search of new tangency points. This practical construction is consistent with the refinement requirement of Theorem~\ref{th1:convergence}, although it should be understood as a finite-resolution implementation rather than as an infinite grid refinement process.

\paragraph{Adapting Algorithm~\ref{algo:tangency} with a candidate pool (no replacement)}
We implement Step~13 of Algorithm~\ref{algo:tangency} in order to control the list of tangency points. Before inserting the suggested point $\hat t$, associated with the index $\hat\iota$, we replace it by the nearest available element of a candidate pool $\mathcal U\subset [a,b]$, and then remove this selected point from $\mathcal U$ (selection without replacement). We denote the remaining candidates at iteration $n$ by
$$
\mathcal U^{(n+1)}=\mathcal U^{(n)}\setminus\{\hat t\},
$$

where $\mathcal U^{(0)}=\mathcal U$.
\paragraph{Compact used to define the pool (quantile construction)}
The theoretical compacts introduced in Lemma~\ref{lemm:compact} may be overly conservative for practical implementation. We therefore define a numerical compact interval $[a,b]$ from \emph{Gaussian quantiles} of the upper tangent bound $\overline b(\cdot;t_1)$ associated with the initial point $t_1$ (see \cite{abramowitz1972handbook, MATLABnorminv2025}). For a prescribed tail level $\gamma>0$, we choose
\begin{multline}
\label{eq_ab}
\int_{\mathbb R\setminus[a,b]} \overline b(x;t_1)\,dx \le \overline C(t_1)\,\gamma,\\
\text{with}\quad
a=\texttt{norminv}(\gamma/2,\mu(t_1),\sigma(t_1)),
\qquad
b=\texttt{norminv}(1-\gamma/2,\mu(t_1),\sigma(t_1)).
\end{multline}
Since $\overline b(\cdot;t_1)$ has Gaussian tails, polynomially weighted tail integrals with respect to $\overline b(\cdot;t_1)$ also become small as $\gamma\to 0$. In particular, when $f$ is a monomial, the quantity
$$
\int_{\mathbb R\setminus[a,b]} |f(x)|\,\overline b(x;t_1)\,dx
$$

is small for sufficiently small $\gamma$. This motivates the use of the quantile-based interval $[a,b]$ in practice: it is easy to compute and typically much less conservative than the theoretical compact sets from Lemma~\ref{lemm:compact}.

\paragraph{Dyadic pool construction}
We build the candidate set $\mathcal U$ from a dyadic decomposition of $[a,b]$, as described in Algorithm~\ref{algo:dyadic_set}.

\begin{algorithm}[H]
\caption{Dyadic Set Construction}
\label{algo:dyadic_set}
\begin{algorithmic}[1]
\Require $\ell>0$ (density parameter), $a$ (start of the interval), $b$ (end of the interval)
\Ensure $\mathcal{U}$ (dyadic candidate set), $M$ (number of candidates)
\State $L \gets \max(1,\lceil b \rceil - \lfloor a \rfloor)$ \Comment{Length of the enclosing integer interval}
\State $d_1 \gets \max\bigl(1,\, \lfloor \ell / L \rfloor \bigr)$ \Comment{Target points per unit interval}
\State $d_2 \gets \lfloor \log_2(d_1) \rfloor$ \Comment{Dyadic depth}
\State $\mathcal{U} \gets \emptyset$
\State $x \gets \lfloor a \rfloor$
\While{$x \le \lceil b \rceil$}
    \State Append $x$ to $\mathcal{U}$
    \State $x \gets x + 2^{-d_2}$ \Comment{Increment by dyadic precision}
\EndWhile
\State $M \gets |\mathcal{U}|$
\State \Return $\mathcal{U}, M$
\end{algorithmic}
\end{algorithm}

\begin{remark}
The input parameter $\ell$ controls the dyadic density within $[a,b]$. The total number of candidates $M$ depends on both $\ell$ and the interval length. As $\ell$ increases, the subdivision becomes finer and $M$ increases. The construction ensures that all integers between $\lfloor a \rfloor$ and $\lceil b \rceil$ are included in $\mathcal U$, which guarantees that an integer tangency point $t_1$ (for instance, in all our experiments, $t_1=1$) belongs to the pool. The dyadic set is therefore aligned with the compact used for integration and offers reproducible control of its granularity.
\end{remark}

\paragraph{Step 13: pool-based insertion rule}
We replace the “choose a new point $\hat t\in S_{\hat\iota}^{(n)}$” by the two-step rule described in Alg.~\ref{algo:two} hereafter; the remaining candidates at iteration $n$ are denoted $\mathcal U^{(n)}\subseteq\mathcal U$ with $\mathcal U^{(0)}=\mathcal U$.

\begin{algorithm}[H]
\caption{Decision rule with dyadic nearest-candidate (without replacement)}
\label{algo:two}
\begin{algorithmic}
\Require Iteration index $n \in \mathbb{N}^*$. Current tangency set $\mathbf{T}^{(n)}$, gaps $\{G_i^{(n)}\}_{i=1}^{M_n+1}$, subintervals $\{S_i^{(n)}\}_{i=1}^{M_n+1}$, remaining dyadic candidates $\mathcal{U}^{(n)}\subseteq\mathcal{U}$, selected index $\hat\iota$ from \eqref{eq:iota}
\Ensure New point $\hat t$, updated sets $\mathbf{T}^{(n+1)}$, $\mathcal{U}^{(n+1)}$
\If{$\mathcal{U}^{(n)}=\varnothing$} \State \Return \Comment{no candidate remains} \EndIf
\If{$\mathcal{U}^{(n)}\cap S_{\hat\iota}^{(n)}=\varnothing$}
    \State $\hat\iota \gets \displaystyle \arg\max\bigl\{\,G_i^{(n)}\ :\ \mathcal{U}^{(n)}\cap S_i^{(n)}\neq\varnothing\,\bigr\}$
    \If{no such $i$} \State \Return \Comment{no candidate remains} \EndIf
\EndIf
\State Compute the target location
$$
\hat{t}^{\star}=
\begin{cases}
\dfrac{t_{\hat{\iota}-1}^{(n)}+t_{\hat{\iota}}^{(n)}}{2}, & 1<\hat{\iota}<M_n+1,\\[0.35cm]
t_1^{(n)}-\dfrac{1}{M_n-1}\sum_{m=1}^{M_n-1}\bigl(t_{m+1}^{(n)}-t_m^{(n)}\bigr), & \hat{\iota}=1,\\[0.45cm]
t_{M_n}^{(n)}+\dfrac{1}{M_n-1}\sum_{m=1}^{M_n-1}\bigl(t_{m+1}^{(n)}-t_m^{(n)}\bigr), & \hat{\iota}=M_n+1.
\end{cases}
$$
\State Pick the nearest available dyadic candidate in $S_{\hat\iota}^{(n)}$ and remove it:
$$
\hat t \in \arg\min_{u\in \mathcal{U}^{(n)}\cap S_{\hat\iota}^{(n)}} |u-\hat t^{\star}|,
\qquad
\mathcal{U}^{(n+1)} \gets \mathcal{U}^{(n)}\setminus\{\hat t\}.
$$
\State Insert $\hat t$ into $\mathbf{T}^{(n)}$ in sorted order to obtain $\mathbf{T}^{(n+1)}$.
\end{algorithmic}
\end{algorithm}

\paragraph{Synthesis of the proposed pipeline:} Let us summarize below the steps of the complete proposed pipeline, satisfying the convergence Theorem~\ref{th1:convergence}:
\begin{enumerate}
    \item Choose initial point $t_1 \in \mathbb{N}$. 
    \item Compute $[a,b]$ using \eqref{eq_ab}.
    \item Build dyadic candidate pool $\mathcal{U}$, with size $M>0$, for a chosen density parameter $\ell>0$, using Alg.~\ref{algo:dyadic_set}.
    \item Run Alg.~\ref{algo:tangency}, using Alg.~\ref{algo:two} for new point selection at each call of step 13 in Alg.~\ref{algo:tangency}.
\end{enumerate}
We terminate Algorithm~\ref{algo:tangency} when the pool $\mathcal{U}$ is exhausted or as soon as the gap $\cblue{G^{(n)} \leq \epsilon}$,
for a user-chosen tolerance $\epsilon>0$. The output are the integral bounds $(\underline{\mathcal{I}}^{(n)},\overline{\mathcal{I}}^{(n)})$.





\color{black}

\section{Application to importance sampler variance estimation}
\label{sec_IS}

In this section, we provide the specific implementation of the proposed method to the problem of variance estimation of the importance sampler. 


\subsection{Importance sampling}

Importance sampling (IS) is a general Monte Carlo technique for the approximation of a probability density function (pdf) of interest by a random measure composed of samples and weights \cite{Robert04}. Consider $\normalized p(x)$ a pdf in $x\in\Real$, and $p(x)$ its unnormalized version, such $\normalized p(x) = \frac{p(x)}{\Zc}$, with $\Zc$ being the (generally) unknown normalizing constant. In an IS method, a set of $N$ samples, $\{x_n\}_{n=1}^N$, is drawn from a {single} proposal pdf, $q(x)$. Each sample, ${x_n}$, is then assigned an importance weight given by
 \begin{equation} 
	w_n= \frac{p(x_n)}{{q(x_n)}}, \quad n=1,\ldots,N.
\label{is_weights_static}
\end{equation} 
The samples and the weights then form the random measure $\chi=\{x_n,w_n\}_{n=1}^N$ that approximates the measure of the target pdf as
\begin{equation}
	{\widehat p}_{\text{IS}} (x)= \frac{1}{N\widehat{\Zc}}  \sum_{n=1}^N w_n \delta_{x_n}(x),
\label{approx_pi}
\end{equation}
where  $\delta_{x_n}(x)$ is the unit delta measure 
concentrated at $x_n$. Given such approximation, it is direct to build approximations of moments of $p(x)$. For instance, $\widehat{\Zc} = \frac{1}{N}\sum_{n=1}^N w_n$ is an unbiased estimator of $\Zc\triangleq\int_{\mathcal{X}} \tilde{p}(x) dx$. More generally, one can approximate $\Ic \triangleq  \int_{\mathcal{X}} {m(x)} \normalized p(x) dx$ by the unbiased IS estimator 
\begin{equation}
\widehat{\Ic} = \frac{1}{N\Zc} \sum_{n=1}^N w_n  {m(x_n)},
\label{eq_IS_estimator_unnorm}
\end{equation}
assuming $\Zc$ is known. Otherwise, if the target distribution is only known up to the normalizing constant (i.e., $\Zc$ is unknown), one can use the self-normalized estimator 
\begin{equation}
	\widetilde{\Ic} = \frac{1}{N\widehat{\Zc}}  \sum_{n=1}^N w_n {m(x_n)},
\label{eq_IS_estimator_norm}
\end{equation}
where $\Zc$ is approximated by its IS estimate. Under some mild assumptions,  $\widetilde{\Ic}$ is a consistent estimator of $\Ic$ \cite{Robert04,mcbook,elvira2021advances}.

\subsection{Characterization of the importance sampling estimators}

The variance of the estimator $\widehat{I}$ 
is directly related to the discrepancy between $\normalized p(x)|{m(x)}|$ and the proposal $q(x)$ \cite{Robert04, kahn1953methods}. Specifically, the variance of the unbiased IS estimator of Eq. \eqref{eq_IS_estimator_unnorm} is given by
\begin{equation}
\Var_q\left(\widehat \Ic\right) = \frac{1}{N\Zc^2} \int_{\mathcal{X}} \frac{m^2(x)  p^2(x) }{q(x)}dx - \frac{\Ic^2}{N}.
\label{var_IS_unnorm}
\end{equation} 
The computation of this variance is usually intractable, and then it should be approximated. An accurate approximation of it is crucial, to assess quantitatively the performance of the IS estimator, and to set adjustment strategies in its hyper-parameters (e.g., parameters of the proposal $q(x)$) to reduce this variance. 

Using the approach introduced in Section \ref{sec_compound}, we propose to construct bounds, for each three integral terms in the variance expression, namely
\begin{align}
&\underline{\Ic} \leq {\Ic} = \int_{\mathcal{X}} {m(x)} p(x) dx \leq \overline{\Ic} \label{eq:bndI1},\\
&\underline{\Zc} \leq {\Zc} = \int_{\mathcal{X}}  p(x) dx \leq \overline{\Zc}\label{eq:bndZ},\\
&\underline{\mathcal{J}} \leq {\mathcal{J}} \triangleq \int_{\mathcal{X}} \frac{m^2(x)  p^2(x) }{q(x)} dx \leq \overline{\mathcal{J}}\label{eq:bndI2}.
\end{align}
Namely, we will apply Alg.~\ref{algo:tangency}, on the three above problems, that take the general form \eqref{integral_interest}, with different roles for functions $\pi(x) = \exp(-\phi(x))$ and $f(x)$, identified as follows:
\begin{itemize}
    \item Computation of the bounds in \eqref{eq:bndI1}, using $\pi \equiv p$ and $f \equiv m$;
     \item Computation of the bounds in \eqref{eq:bndZ}, using $\pi \equiv p$ and $f \equiv 1$;
        \item Computation of the bounds in \eqref{eq:bndI2}, using $\pi \equiv p^2/q$ and $f \equiv m^2$.
\end{itemize}
Finally, combining the obtained bounds, the IS estimator variance will be approximated as:
\begin{equation}
\frac{1}{N\overline{\Zc}^2}  \underline{\mathcal{J}} - \frac{\overline{\Ic}^2}{N} \leq \Var_q\left(\widehat I\right) \leq \frac{1}{N\underline{\Zc}^2}  \overline{\mathcal{J}} - \frac{\underline{\Ic}^2}{N}.
\end{equation}

To illustrate and discuss the practical implementation, and assess the performance of this approach, we detail in the next section a practical example for the target function $\pi$. We also present a numerical comparison with a standard numerical integrator, as well as with the state-of-the-art method from \cite{evans1997approximation} for integral approximation.


\section{Numerical Results}
\label{sec_logistic}
\subsection{Target function and proposal}

We now consider an experimental study of the Bayesian logistic regression model, estimating the variance of the IS estimator as   depicted in Sec.~\ref{sec_IS}. Given $J\geq1$ data, associated to $(y_j)_{1 \leq j \leq J} \in \left\{-1,1\right\}$ class labels, and $(w_j)_{1 \leq j \leq J} \in \mathbb{R}$ features, the target function is constructed as the product of the Bayesian logistic likelihood with a Gaussian prior with zero mean and variance $s^2$ (with $s>0$). The unnormarlized target function is 
\begin{equation}
(\forall x \in \bR) \quad
p(x) = A \exp(-x^2/(2 s^2)) \times \prod_{j=1}^J \exp\left(-\log( 1 + \exp(y_j (w_j x)) )\right),
\label{eq:pi}
\end{equation}
with $A > 0$. In all numerical experiments, we set $s=1.2$, $J=10$, and $A=1$. 



Following the notations in \eqref{integral_interest}, we denote 
\begin{equation}
\label{eq:phi_experiment}
(\forall x \in \mathbb{R}) \quad \phi(x) = \frac{x^2}{2 s^2} + \sum_{j=1}^J \log\left(1 + \exp(y_j w_j x)\right) - \log(A),
\end{equation}
so that, for every $x \in \mathbb{R}$, $p(x) = \exp(-\phi(x))$. Using \cite{Bouchard2008,chouzenoux:hal-03793623}, we deduce that function $\phi$ in \eqref{eq:phi_experiment} satisfies Assumptions~\ref{assump1} and \ref{assump2} on $\mathbb{R}$, with the following expressions for $\beta(t)$ and $\nu(t)$:
\begin{equation}
\label{eq:beta_nu_pi}
(\forall t \in \mathbb{R}) \quad
\begin{cases}                      
\beta(t) = \sum_{j=1}^J (y_j w_j)^2 \psi(y_j w_j t) + \frac{1}{s^2},\\
\nu(t) = \frac{1}{s^2},
\end{cases}
\end{equation}
where the function $\psi: \mathbb{R} \to \left(0, \frac{1}{4}\right]$ is defined as
\begin{equation}
\label{eq:psi_def}
(\forall t \in \mathbb{R}) \quad
\psi(t) = 
\begin{cases}                      
\frac{1}{4} & \text{if} \; t = 0,\\
\frac{1}{t} \left(\frac{1}{1 + \exp(-t)} - \frac{1}{2}\right) & \text{otherwise.}
\end{cases}
\end{equation}

We furthermore define, for the proposal of the IS sampler, a Gaussian proposal distribution $q(x)$ with mean equals $\mu = 2$ and \cblue{standard deviation} $\theta>0$. We set $m(x) = x^2$. \cblue{Figure~\ref{fig:pi_vs_piq} illustrates the shape of $p(x)$, $m(x)p(x)$ and $m(x)^2p(x)^2/q(x)$, using $\theta = 1.5$ for the proposal standard deviation. Function $p$ is unimodal, while the two other functions are multimodal.}

\begin{figure}[H]
    \centering
    \begin{tabular}{@{}c@{}c@{}c@{}}
    \includegraphics[width=5cm]{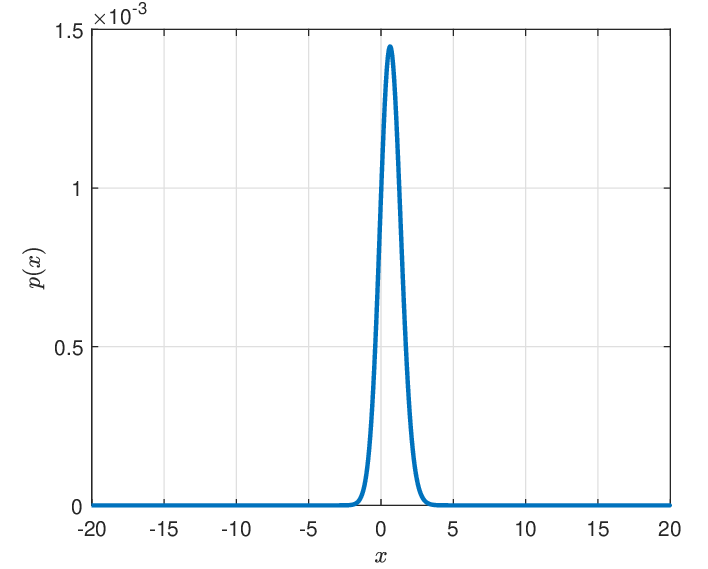} &
 \includegraphics[width=5cm]{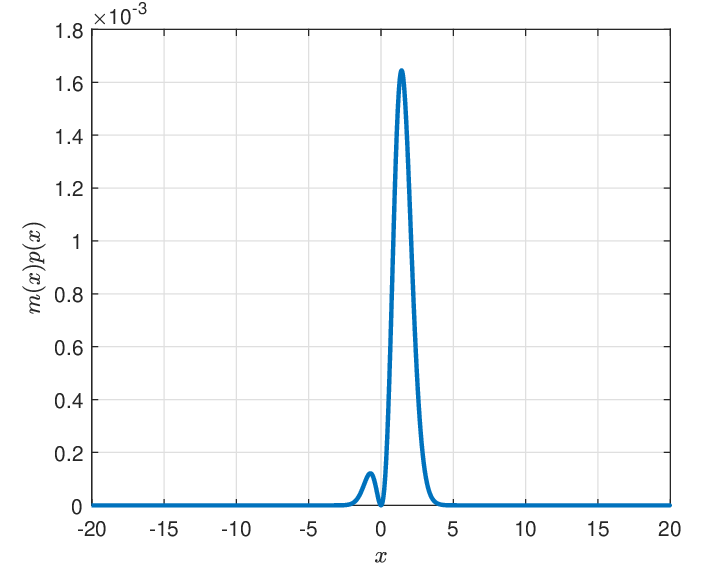}
 &
 \includegraphics[width=5cm]{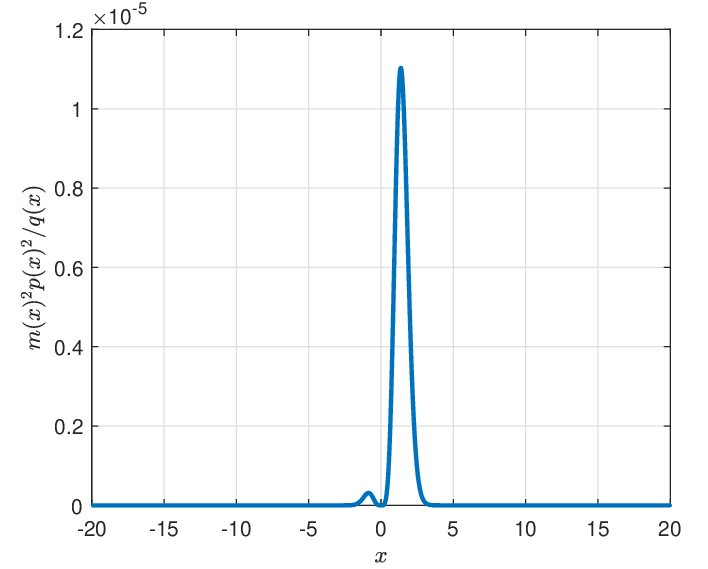}
 \end{tabular}
    \caption{\cblue{Plots of functions $x \to p(x)$, $x \to m(x)p(x)$ and $x \to m(x)^2p(x)^2/q(x)$ over interval $[-20,20]$.}}
\label{fig:pi_vs_piq}
\end{figure}

The final goal is to compute bounds on the variance estimator \eqref{var_IS_unnorm}. We run Alg.~\ref{algo:tangency} to construct bounds for the three intermediary quantities \eqref{eq:bndI1},\eqref{eq:bndZ}, and \eqref{eq:bndI2} \cblue{(i.e., the integrals over $\mathbb{R}$ of the three functions displayed in Fig.~\ref{fig:pi_vs_piq})}, leading to the following bounds for the integral \eqref{var_IS_unnorm}: 
\begin{equation}
\label{eq:variancebounds}
\underline{V} = \frac{1}{N \overline{\mathcal{Z}}^2} \, \underline{\mathcal{J}} - \frac{\overline{\mathcal{I}}^2}{N}, \quad
\overline{V} = \frac{1}{N \underline{\mathcal{Z}}^2} \, \overline{\mathcal{J}} - \frac{\underline{\mathcal{I}}^2}{N}.
\end{equation}
The bounds are compared to empirical variance estimation, obtained by running the IS simulator on $N_{\text{runs}}$ runs. Otherwise stated, we set $\theta = 1.5$, $N = 20$, and $N_{\text{runs}} = 10^6$.
\cblue{The complete derivations for building Gaussian majorant and minorant for the ratio $x \to p(x)^2/q(x)$ are provided in Appendix~B. The latter computation raises the condition $(\forall t \in \mathbb{R}) \; (\underline{\sigma}(t)^2,\overline{\sigma}(t)^2)\in (0,2\theta^2)$ limiting the range of the proposal standard deviation $\theta$, that is satisfied empirically in our experiences. This condition can be interpreted as a control on the relative tails of $p^2$ and $q$, ensuring the finiteness of the moments of the ratio $p^2/q$.}



\subsection{Practical set-up}
\label{sec:practical}

\noindent\textbf{Dyadic sets}  
 As described in Sec.~\ref{sec:algopractical}, the pool of available tangency points, for Alg.~\ref{algo:tangency} is constructed by the procedure in Alg.~\ref{algo:dyadic_set}, starting from an interval defined in \eqref{eq_ab}. Setting the prescribed tail level to $\gamma = 10^{-6}$, \eqref{eq_ab} yields $[a_p,b_p] = [-5.8655,5.8744]$ when $\pi \equiv p$ (for \eqref{eq:bndI1} and \eqref{eq:bndZ}), and $[a_q,b_q] = [-5.9681,4.0988]$ when $\pi \equiv p^2/q$ (for \eqref{eq:bndI2}). Moreover, in all experiments, we set {$t_1 = 1$ and the density parameter for dyadic constructions $\ell = 10000$, which leads to $M_p = 6145$ (resp. $M_q = 5633$) candidate points in the dyadic sets obtained via Alg.~\ref{algo:dyadic_set}}

We structure the numerical study in three parts:

\begin{itemize}
  \item \textbf{Validation of the obtained bounds.}
  We first assess in Sec.~\ref{eq:expe1}, the accuracy of the upper/lower envelopes produced by Algorithm~\ref{algo:tangency}, by comparing the midpoint
  $$
    \frac{\overline{\mathcal{X}}+\underline{\mathcal{X}}}{2}
  $$
  against MATLAB's \texttt{integral()} for each target integral $\mathcal{X}\in\{\mathcal{Z},\mathcal{J},\mathcal{I}\}$. We report absolute and relative errors, together with the number of tangency points required.

  \item \textbf{Validation of the variance estimator.}
  Next, in Sec.~\ref{sec:expe2}, we compute the bounds on the variance of the Importance Sampling estimator for a range of values for the proposal variance $\theta$. This illustrates the validity of the bounds, compared to its empirical computation, and their applicative use to optimally set the proposal hyper-parameter $\theta$. 

  \item \textbf{Comparison with state-of-the-art.}  Finally, in Sec.~\ref{sec:expe3}, we compare our results with the envelope-based approximation of Evans~\emph{et al.} \cite{evans1997approximation}, in the case of the evaluation of bounds for the integrals $\mathcal{Z}$ and $\mathcal{I}$, using comparable computational budgets (same number of tangency points / evaluations), and discuss accuracy–cost trade-offs.
\end{itemize}

\subsubsection{Numerical validation of bounds}
\label{eq:expe1}

This section evaluates the tightness and accuracy of the envelope bounds produced by Algorithm~\ref{algo:tangency}. First, we report, in Tables~\ref{tab:I_results_alg4}, \ref{tab:J_results_alg4}, \ref{tab:Z_results_alg4}, the upper and lower bounds obtained when we stop as soon as a prescribed relative precision condition $G^{(n)}/\overline{\Ic}^{(n)} \leq \tau$, for some $\tau>0$, is reached. We also report the number of iterations $n_{\text{stop}}$ when reaching this stopping criterion. Note that, since $\mathbf{T}^{(1)}$ is reduced to the singleton $t_1$, we have $n_{\text{stop}}= \text{Card}(\mathbf{T}^{(n)})$, i.e., the number of evaluated tangency points. Finally, we display the difference between the estimated upper and lower bounds, as well as the absolute difference between the midpoint
$$
\frac{\overline{\mathcal{X}}+\underline{\mathcal{X}}}{2},
$$ with $\mathcal{X}\in\{\mathcal{I},\mathcal{J},\mathcal{Z}\}$, and
the MATLAB’s \texttt{integral()} result. As expected, the accuracy of the bounds increases when refining the stopping criterion, as this yields exploring more tangency points. The results are close to the ones obtained with the numerical integrator, the main difference is that our construction ensures that the bounds envelop the ground truth integral. 

%


Second, we visualize on Figure~\ref{fig:Z_algo2} the convergence of the algorithm, until the relative-precision target $\tau = 10^{-4}$ is met. 

\sisetup{round-mode = places, round-precision = 3, round-integer-to-decimal}
\sisetup{detect-weight, group-minimum-digits = 6, mode=text}

\sisetup{round-mode = places, round-precision = 3, round-integer-to-decimal}
\sisetup{detect-weight, group-minimum-digits = 6, mode=text}

\begin{table}[H]
\centering
\footnotesize
\setlength{\tabcolsep}{3pt}
\begin{tabular}{@{}cccccc@{}}
\toprule
\multicolumn{6}{c}{Integral $\mathcal{I}$} \\
\midrule
$\tau$ & $n_{\text{stop}}$ &
$\overline{\mathcal{I}}$ & $\underline{\mathcal{I}}$ &
$\overline{\mathcal{I}}-\underline{\mathcal{I}}$ & Diff. vs MATLAB \\
\midrule
\num{1e-2} & 11  & \num{2.663e-3} & \num{2.638e-3} & \num{2.568e-5} & \num{9.271e-6} \\
\num{1e-3} & 34 & \num{2.643e-3} & \num{2.641e-3} & \num{2.605e-6} & \num{6.888e-7} \\
\num{1e-4} & 104 & \num{2.641e-3} & \num{2.641e-3} & \num{2.622e-7} & \num{7.237e-8} \\
\bottomrule
\end{tabular}
\caption{Obtained bounds for $\mathcal{I}$. The value computed with MATLAB's \texttt{integral()} is \num{2.641e-3}.}
\label{tab:I_results_alg4}
\end{table}

\sisetup{round-mode = places, round-precision = 3, round-integer-to-decimal}
\sisetup{detect-weight, group-minimum-digits = 6, mode=text}

\begin{table}[H]
\centering
\footnotesize
\setlength{\tabcolsep}{3pt}
\begin{tabular}{@{}cccccc@{}}
\toprule
\multicolumn{6}{c}{Integral $\mathcal{J}$} \\
\midrule
$\tau$ & $n_{\text{stop}}$&
$\overline{\mathcal{J}}$ & $\underline{\mathcal{J}}$ &
$\overline{\mathcal{J}}-\underline{\mathcal{J}}$ & Diff. vs MATLAB \\
\midrule
\num{1e-2} & 12  & \num{1.27083088954454e-05} & \num{1.25829342756408e-05} & \num{1.25374619804558e-07} & \num{4.11079293789589e-08} \\
\num{1e-3} & 36  & \num{1.26145835585724e-05} & \num{1.26025278716543e-05} & \num{1.20556869180853e-08} & \num{4.04205894922766e-09} \\
\num{1e-4} & 112  & \num{1.26055447734102e-05} & \num{1.26042887797313e-05} & \num{1.25599367892594e-09} & \num{4.03120406643914e-10} \\
\bottomrule
\end{tabular}
\caption{Obtained bounds for $\mathcal{J}$. The value computed with MATLAB's \texttt{integral()} is \num{1.260e-5}.}
\label{tab:J_results_alg4}
\end{table}

\sisetup{round-mode = places, round-precision = 3, round-integer-to-decimal}
\sisetup{detect-weight, group-minimum-digits = 6, mode=text}

\begin{table}[H]
\centering
\footnotesize
\setlength{\tabcolsep}{3pt}
\begin{tabular}{@{}cccccc@{}}
\toprule
\multicolumn{6}{c}{Integral $\mathcal{Z}$} \\
\midrule
$\tau$ & $n_{\text{stop}}$ &
$\overline{\mathcal{Z}}$ & $\underline{\mathcal{Z}}$ &
$\overline{\mathcal{Z}}-\underline{\mathcal{Z}}$ & Diff. vs MATLAB \\
\midrule
\num{1e-2} & 11  & \num{2.67031417564e-3} & \num{2.64478931216406e-3} & \num{2.55248634759460e-05} & \num{1.02697731105320e-05} \\
\num{1e-3} & 31  & \num{2.64954383679473e-3} & \num{2.64694189223841e-3} & \num{2.60194455631605e-06} & \num{9.60893725069148e-07} \\
\num{1e-4} & 101 & \num{2.64750273488882e-3} & \num{2.64723991394386e-3} & \num{2.62820944962189e-07} & \num{8.93536248407732e-08} \\
\bottomrule
\end{tabular}
\caption{Obtained bounds for $\mathcal{Z}$. The value computed with MATLAB's \texttt{integral()} is \num{2.647e-3}.}
\label{tab:Z_results_alg4}
\end{table}

\begin{figure}[H]
\centering
\begin{tabular}{@{}c@{}c@{}c@{}}
\includegraphics[width=5cm]{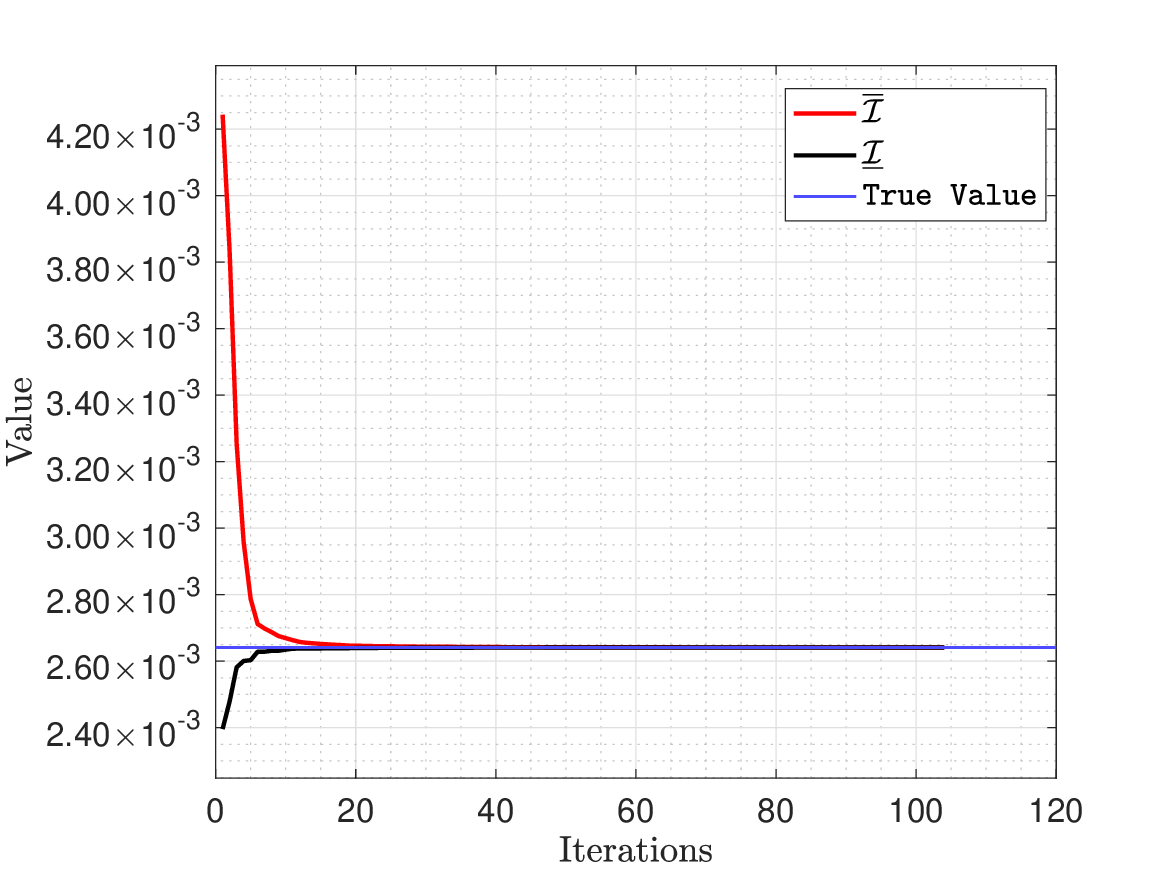} & 
\includegraphics[width=5cm]{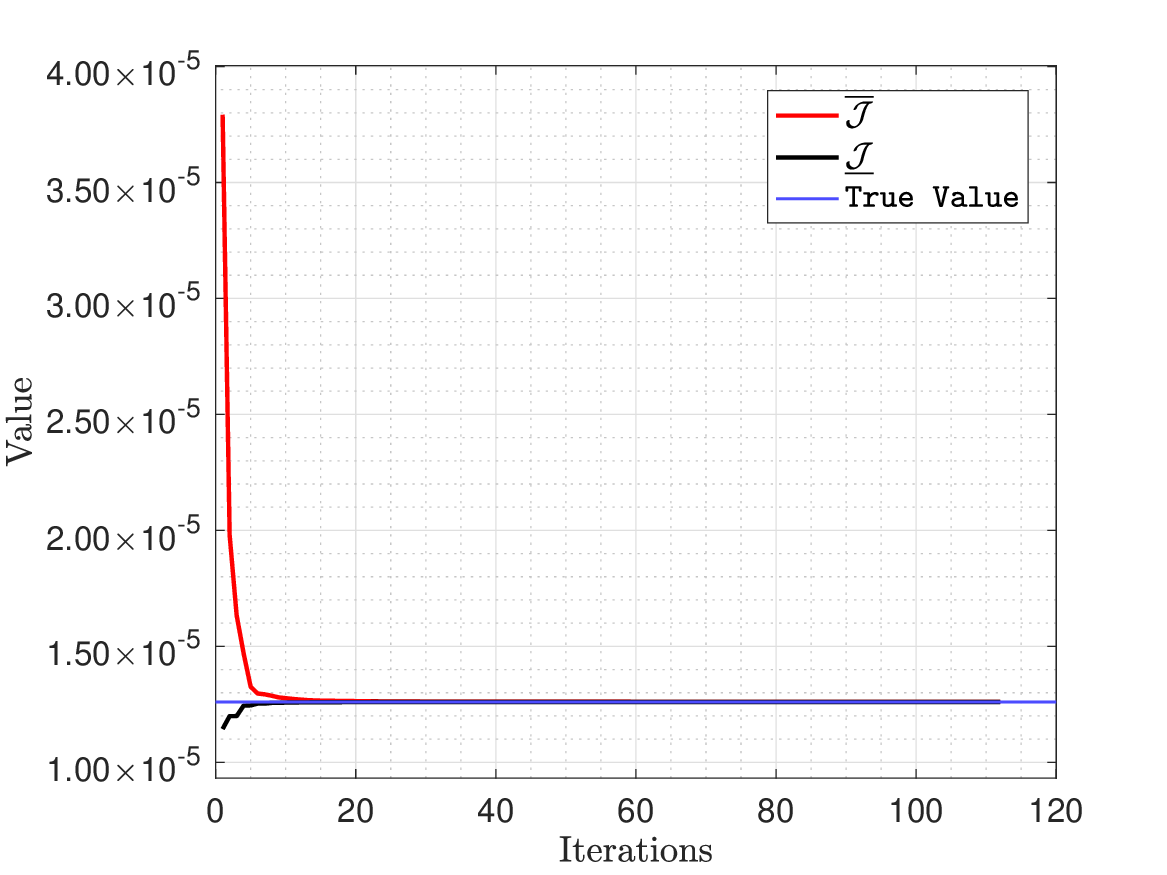}
& 
\includegraphics[width=5cm]{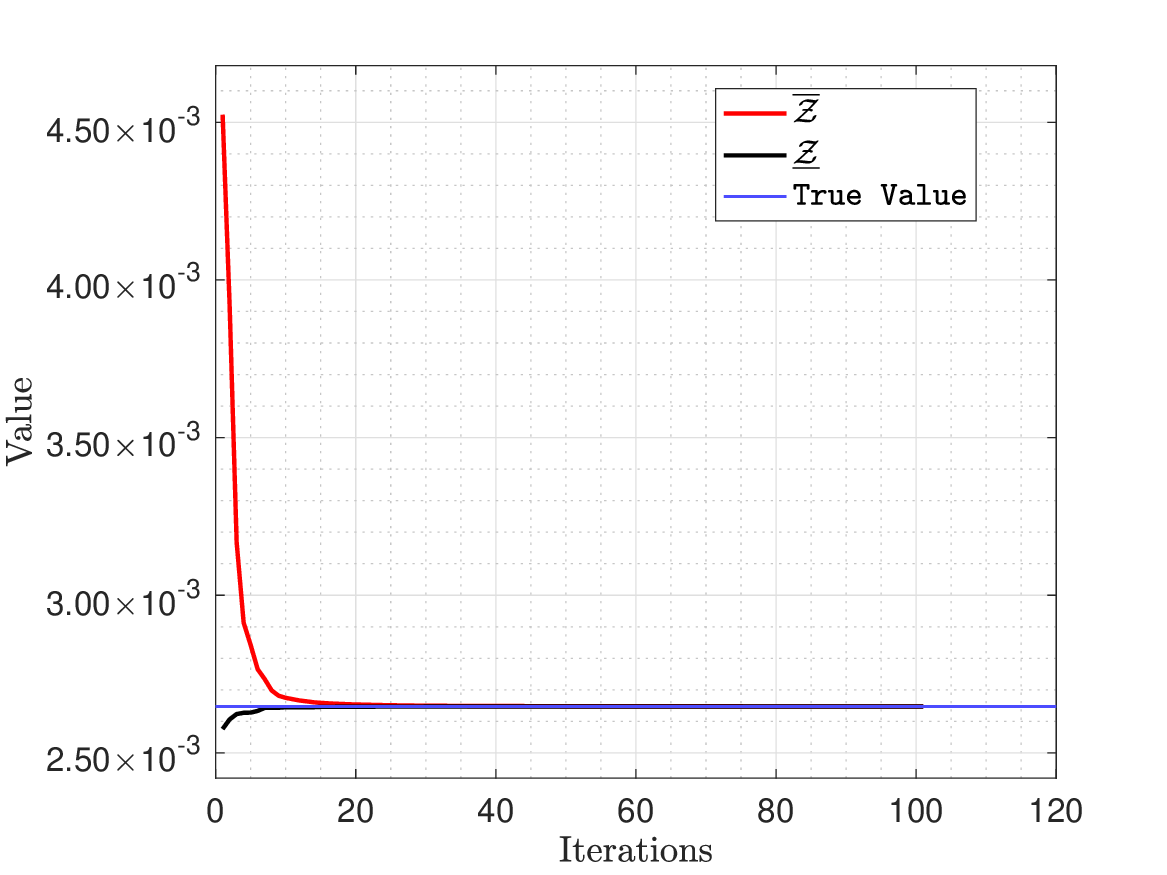} 
\end{tabular}
\caption{Evolution of the bounds along iterations $n$ of Alg.~\ref{algo:tangency}, until a relative precision of $\tau = 10^{-4}$ is reached, using Algorithm~\ref{algo:tangency}.}
\label{fig:Z_algo2}
\end{figure}



Across Tables~\ref{tab:I_results_alg4}–\ref{tab:Z_results_alg4}, Algorithm~\ref{algo:two} consistently achieves the prescribed relative precisions with a modest number of tangency points (e.g., $\tau =$ \num{1e-4} with 104 points for $\mathcal{I}$, 112 for $\mathcal{J}$, and 101 for $\mathcal{Z}$), while keeping both the absolute gap between bounds and the deviation from MATLAB extremely small (down to the order of $10^{-8}$).  
The convergence plots in Figure~\ref{fig:Z_algo2} confirm the fast monotone contraction of the envelopes and their stabilization around the reference as iterations increase, illustrating that the adaptive placement of tangency points effectively targets the regions that most reduce the bound gap. We observe faster convergence for the lower bound. This is expected, as it relies on the accurate, and adaptive, curvature $\beta(t)$.

\subsection{IS variance estimation}
\label{sec:expe2}

We present in Table~\ref{tab:variances} the obtained bounds on the variance of the IS method, computed as in \eqref{eq:variancebounds}, where the three intermediary bounds are computed with the proposed Alg.~\ref{algo:tangency} with relative precision $\tau = 10^{-4}$. We compare these bounds with the empirical variance $V_e$ of the unbiased IS estimator. Namely, we evaluate~\eqref{eq_IS_estimator_unnorm}, for $N_{\text{runs}}$ independent runs of IS. We use  $N=20$ samples, proposal \cblue{standard deviation} $\theta = 1.5$, and numerical intensive integration to compute the normalization constant of the target. This leads to $(\widehat{I}_\iota)_{\iota \in [1, \dots, N_{\text{runs}}]}$, from which we calculate the variance $V_e$. We also calculate the theoretical value $\text{Var}_q(\widehat{I})$ of the unbiased IS variance, whose expression is given in \eqref{var_IS_unnorm}, computed with the numerical integrator from Matlab. 

\sisetup{round-mode = places, round-precision = 3, round-integer-to-decimal}
\sisetup{detect-weight, group-minimum-digits = 4, mode=text}

\begin{table}[H]
\centering
\begin{tabular}{|c|c|c||c|c||c|}
\hline
  $V_e \,(N_{\text{runs}} = 10^2)$ & $V_e \,(N_{\text{runs}} = 10^4)$ & $V_e \,(N_{\text{runs}} = 10^6)$ & \textbf{$\underline{V}$} & \textbf{$\overline{V}$} & $\text{Var}_q(\widehat{I})$ \\
\hline
 \num{4.277e-2} & \num{4.005e-2} & \num{4.018e-2} & \num{4.013e-2} & \num{4.018e-2} & \num{4.016e-2}\\
\hline
\end{tabular}
\caption{Empirical variance $V_e$ of $(\widehat{I}_\iota)_{\iota \in [1, \dots, N_{\text{runs}}]}$, for different $N_{\text{runs}}$, estimated bounds $(\underline{V},\overline{V})$ by proposed approach, and theoretical value $\text{Var}_q(\widehat{I})$ evaluated by intensive numerical integration in \eqref{var_IS_unnorm} .}
\label{tab:variances}
\end{table}

As expected, as the number of runs $N_{\text{runs}}$ increases, $V_e$ converges towards the theoretical value \eqref{var_IS_unnorm} of the unbiased IS variance. The bounds computed with our method are valid and have good precision, while neither requiring any MC runs, nor intensive numerical integration.

Now, we run our approach to compute $(\underline{V},\overline{V})$, for different values of the proposal variance $\theta$, dataset size $J$ and prior hyper-parameter $s$. This experiment aims at illustrating a key motivation behind our work, i.e., to find the optimal proposal hyper-parameter, minimizing the variance of the estimator. The results are displayed in Figure~\ref{fig:var_vs_theta_s1.2}(left), along with the empirical variance $V_e$ evaluated for $N_{\text{runs}} = 10^6$ runs. We also display in Figure~\ref{fig:var_vs_theta_s1.2}(right) the Mean Square Error (MSE) of the IS estimator to compute the second order moment. Namely, the MSE is defined as
$$
\text{MSE} = \frac{1}{N_s} \sum_{\iota=1}^{N_s} \left( \frac{\mathcal{I}}{\mathcal{Z}} - \widehat{I}_\iota \right)^2,
$$
where $\mathcal{I}$ and $\mathcal{Z}$ are computed by numerical integration. Both plots are the same, as the variance and the MSE of the estimator are identical. This illustrates that the IS estimator is unbiased. An important remark is that the range $\theta$, used in all plots, is set so as to guarantee the well definition of all involved integrals.  

\begin{figure}[H]
\centering
\begin{tabular}{@{}c@{}c@{}}
\includegraphics[width=5cm]{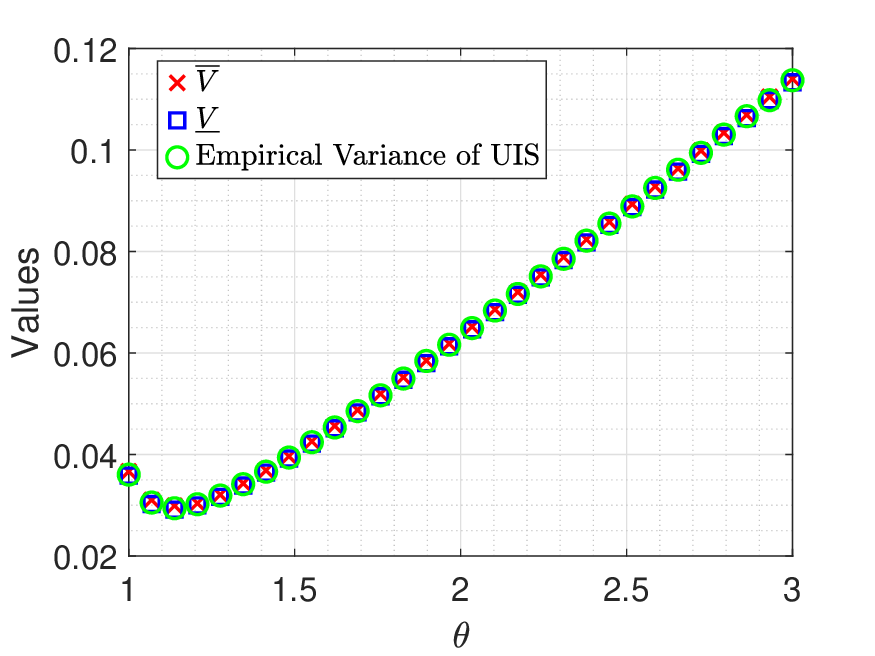} &
\includegraphics[width=5cm]{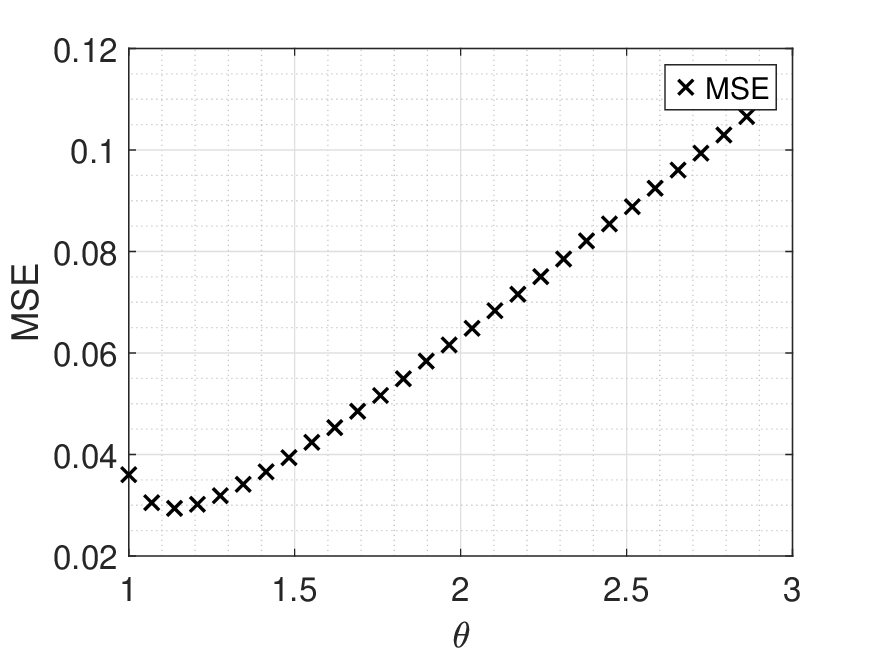}\\
\includegraphics[width=5cm]{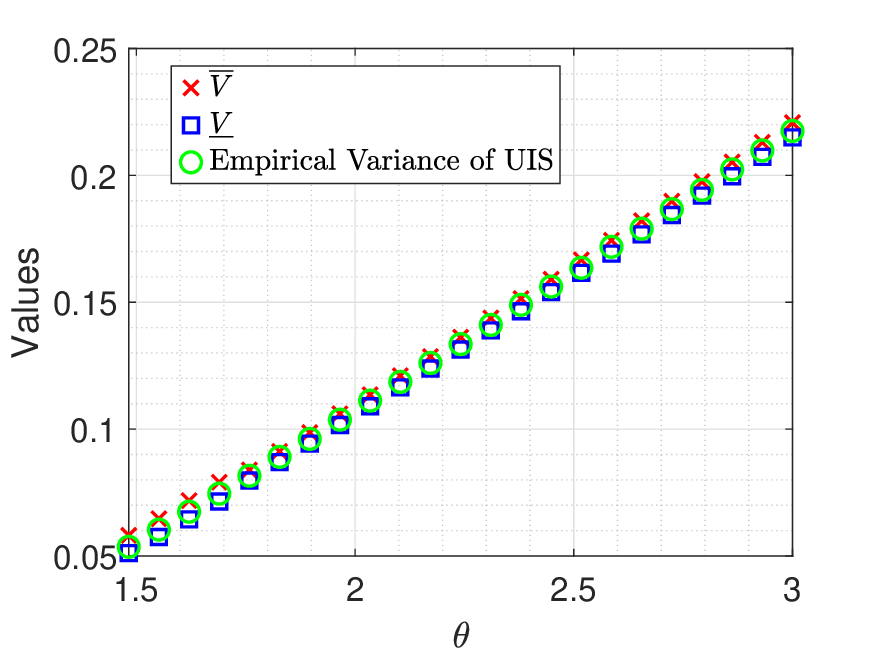} & \includegraphics[width=5cm]{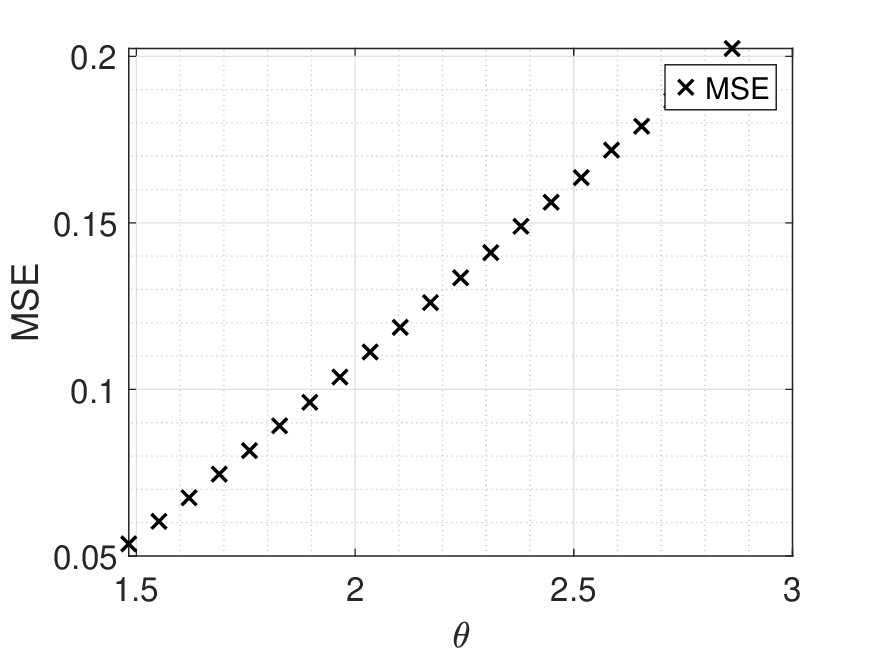}\\
\includegraphics[width=5cm]{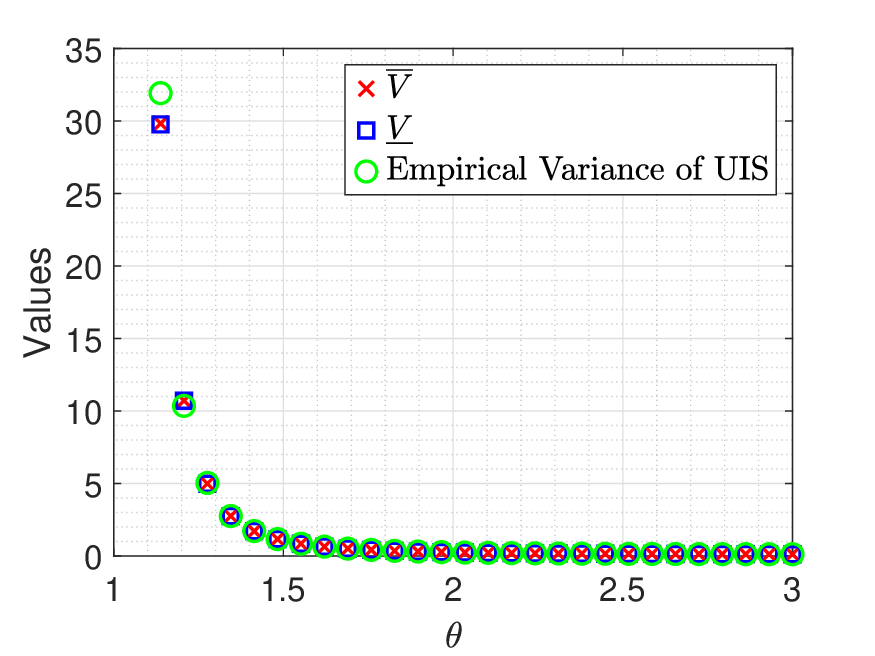} & \includegraphics[width=5cm]{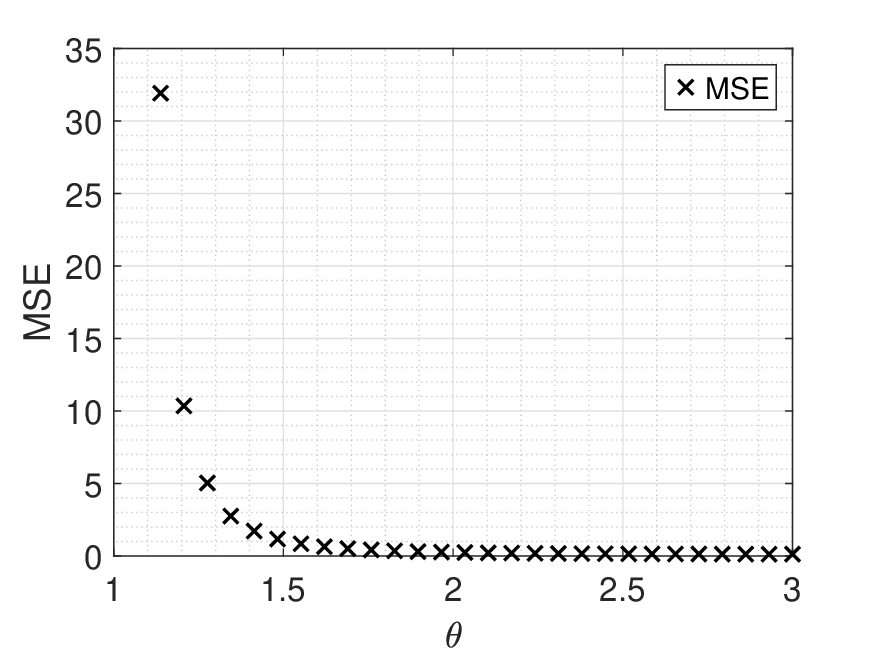}
\end{tabular}
\caption{Obtained bounds (red, blue) and empirical variance $V_e$ with $N_{\text{runs}} = 10^6$ (left) and MSE on second order moment (right) for different values of $\theta$. Top: $J=10$, $s=1.2$, middle: $J=10, s = 2$, bottom: $J=2$, $s=1.2$.}
\label{fig:var_vs_theta_s1.2}
\end{figure}

We can see in Figure~\ref{fig:var_vs_theta_s1.2} that, depending on the values of $(J,s)$ (resp., dataset size, and prior hyper-parameter), the plots have different aspects. In Figure~\ref{fig:var_vs_theta_s1.2}(top), a minimum of the variance is reached,  approximately at $\theta = 1.14$, which guides to tune the proposal in this case as \cblue{a Gaussian distribution with mean $\mu=2$ and standard deviation $\theta = 1.14$}. In Figure~\ref{fig:var_vs_theta_s1.2}(middle), the best proposal is the smallest $\theta$ value, in the range of well-defined integrals. In contrast, in
Figure~\ref{fig:var_vs_theta_s1.2}(bottom), a large $\theta$ leads to a reduced variance. In all cases, the computed bounds are valid and accurate, and provide a direct feedback on the quality of the proposal hyper-parameter in to get a minimal variance for the estimator, without requiring expensive MC runs nor MSE computation (impossible without access to ground truth).

\subsection{Comparison with state-of-the-art}
\label{sec:expe3}
Numerous algorithms are available to approximate integrals. The major part of them are constructed thanks to an interpolation method. This consists in approximating a function by an easier one where the two function are equal in a finite number of anchor points, for example one can use as an approximating family the well known Lagrange interpolator polynomial. However, only very few allow to compute exact bounds on integrals. We choose to compare our method to the one in \cite{evans1997approximation} computing bounds relying on polynomial piece-wise envelopes. Their main result is reminded below. 
\begin{lemma}{\cite[Lem.2]{evans1997approximation}} 
Let $d \geq 0$. If $f^{(d)}$ is concave on $[a,b]$ then $(\forall x \in [a,b])$
    \begin{equation}
\sum_{k=0}^{d} \frac{f^{(k)}(a)}{(k+1)!}(b-a)^{k+1} + \frac{f^{(d)}(b) - f^{(d)}(a)}{b-a}\frac{(b-a)^{d+2}}{(d+2)!}
\leq \int_a^b f(x) \, dx \leq \sum_{k=0}^{d+1} \frac{f^{(k)}(a)}{(k+1)!}(b-a)^{k+1}.
\end{equation}
This inequality is reverse if $f^{(d)}$ is convex.
\end{lemma}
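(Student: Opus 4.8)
The plan is to reduce the statement to Taylor's formula with integral remainder of order $d$, and then to sandwich the integrand $g := f^{(d)}$ between one of its tangent lines and one of its chords, exploiting concavity (resp.\ convexity). For $d\ge 1$ and $x\in[a,b]$ I would start from
\begin{equation}
f(x) = \sum_{k=0}^{d-1} \frac{f^{(k)}(a)}{k!}(x-a)^k + \frac{1}{(d-1)!}\int_a^x (x-t)^{d-1}\, g(t)\, dt,
\label{eq:plan-taylor}
\end{equation}
which is legitimate because $g=f^{(d)}$, being finite and concave (resp.\ convex) on $[a,b]$, is bounded and continuous on $(a,b)$, hence integrable, and $f^{(d-1)}$ is one of its antiderivatives; the case $d=0$ is the same argument applied directly to $f=g$, requiring no remainder term.

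Next I would bound $g$ on $[a,x]\subseteq[a,b]$ in the two complementary ways afforded by concavity: its graph lies below the right-tangent at $a$, i.e. $g(t)\le f^{(d)}(a) + f^{(d+1)}(a)(t-a)$, where $f^{(d+1)}(a)$ is understood as the one-sided derivative of $g$ at $a$ (which always exists in $(-\infty,+\infty]$, the infinite case making this bound vacuous); and its graph lies above the chord through $\bigl(a,g(a)\bigr)$ and $\bigl(b,g(b)\bigr)$, i.e. $g(t)\ge f^{(d)}(a) + \tfrac{f^{(d)}(b)-f^{(d)}(a)}{b-a}(t-a)$. Since the kernel $(x-t)^{d-1}$ is nonnegative on $[a,x]$, inserting these two estimates into \eqref{eq:plan-taylor} and using the elementary moments
\begin{equation}
\frac{1}{(d-1)!}\int_a^x (x-t)^{d-1}\,dt = \frac{(x-a)^{d}}{d!}, \qquad
\frac{1}{(d-1)!}\int_a^x (x-t)^{d-1}(t-a)\,dt = \frac{(x-a)^{d+1}}{(d+1)!},
\label{eq:plan-moments}
\end{equation}
yields, for every $x\in[a,b]$, the pointwise sandwich
\begin{equation}
\sum_{k=0}^{d} \frac{f^{(k)}(a)}{k!}(x-a)^k + \frac{f^{(d)}(b)-f^{(d)}(a)}{b-a}\,\frac{(x-a)^{d+1}}{(d+1)!} \;\le\; f(x) \;\le\; \sum_{k=0}^{d+1} \frac{f^{(k)}(a)}{k!}(x-a)^k .
\label{eq:plan-pointwise}
\end{equation}

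Finally I would integrate \eqref{eq:plan-pointwise} over $[a,b]$ and apply $\int_a^b \tfrac{(x-a)^k}{k!}\,dx=\tfrac{(b-a)^{k+1}}{(k+1)!}$ termwise, which turns \eqref{eq:plan-pointwise} into exactly the asserted double inequality for $\int_a^b f$; note that \eqref{eq:plan-pointwise} is itself the sharper pointwise statement lying behind the result. For the convex case, $g$ lies above its tangents and below its chords, so every inequality in \eqref{eq:plan-pointwise} reverses and the identical computation gives the reversed conclusion. I expect the only genuinely delicate part to be the regularity bookkeeping: justifying \eqref{eq:plan-taylor} (it suffices that $f^{(d-1)}$ be an antiderivative of the integrable function $f^{(d)}$) and clarifying the status of $f^{(d+1)}(a)$ as the one-sided derivative of the convex/concave $g$; the two moment computations in \eqref{eq:plan-moments} and the termwise integration are routine.
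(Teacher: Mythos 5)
The paper does not prove this lemma; it is quoted verbatim from Evans et al.\ as a cited result, so there is no in-paper argument to compare against. Your proof is correct and is the natural (and, as far as the original reference goes, essentially the standard) argument: Taylor's formula with integral remainder of order $d$, sandwiching $f^{(d)}$ between its one-sided tangent at $a$ and the chord over $[a,b]$ via concavity, using the nonnegativity of the kernel $(x-t)^{d-1}$ and the two Beta-type moment identities, then integrating the resulting pointwise bounds termwise; your handling of the $d=0$ degenerate case and of $f^{(d+1)}(a)$ as a one-sided derivative (possibly $+\infty$, making the upper bound vacuous) covers the only regularity subtleties.
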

Starting from this lemma, the authors propose a piece-wise (i.e., coumpounded) version of their approximation, by cutting the closed interval $[a,b]$ in $m$ sub-intervals, and derive a control of the precision of the obtained integral bounds, in \cite[Lem.3]{evans1997approximation}. In our practical implementation of this method, for a fair comparison, we use the same interval $[a,b]$ than the ones provided in Sec.~\ref{sec:practical} (obtained with \eqref{eq_ab}). We set a fixed number of iterations (i.e., cardinality of our pool of tangency points) for our method, namely $n=3$, $50$ or $100$. We set this same number for the coumpound points in Evans' method, and evaluate it with $d=0, 1, 2$ degree values. We only provide the results for $\mathcal{I}$ and $\mathcal{Z}$, as Evans' method complexity was too large, for handling the approximation of integral~$\mathcal{J}$. 



 
    



\sisetup{round-mode = places, round-precision = 3, round-integer-to-decimal}
\sisetup{detect-weight, group-minimum-digits = 6, mode=text}

\begin{table}[H]
\centering
\small
\caption{Estimation of $\mathcal{Z}$ - Absolute and relative errors for proposed method and Evans', for different compound-point budget.}
\label{tab:comparison_Z_byCP}
\begin{tabular}{@{} l
                S[scientific-notation=true,retain-zero-exponent]
                S[scientific-notation=true,retain-zero-exponent] @{}}
\toprule
\multicolumn{3}{c}{Compound points = 3} \\
\midrule
Method  & {Absolute Error} & {Relative Error} \\
\midrule
Ours            & \num{1.94795802878871e-3} & \num{4.30600800297280e-1} \\
\hline
Evans (d=0)       & \num{1.351e-2}            & \num{1.910e+0} \\
Evans (d=1)      & \num{1.403e-2}            & \num{1.041e+0} \\
Evans (d=2)      & \num{2.772e-1}            & \num{2.475e+1} \\
\midrule
\multicolumn{3}{c}{Compound points = 50} \\
\midrule
Method  & {Absolute Error} & {Relative Error} \\
\midrule
Ours           & \num{1.18022352351647e-6} & \num{4.456539222218252e-4} \\
\hline
Evans (d=0)     & \num{6.8571e-5}           & \num{2.5571e-2} \\
Evans (d=1)      & \num{1.1823e-5}           & \num{4.4559e-3} \\
Evans (d=2)      & \num{4.2899e-7}           & \num{1.6204e-4} \\
\midrule
\multicolumn{3}{c}{Compound points = 100} \\
\midrule
Method & {Absolute Error} & {Relative Error} \\
\midrule
Ours           & \num{2.81552495653373e-7} & \num{1.06345713243234e-4} \\
\hline
Evans (d=0)    & \num{1.684e-5}            & \num{6.341e-3} \\
Evans (d=1)   & \num{1.433e-6}            & \num{5.413e-4} \\
Evans (d=2)     & \num{5.013e-9}            & \num{1.894e-6} \\
\bottomrule
\end{tabular}
\end{table}

\sisetup{round-mode = places, round-precision = 3, round-integer-to-decimal}
\begin{table}[H]
\centering
\small
\caption{Estimation of $\mathcal{I}$ - Absolute and relative errors for proposed method and Evans', for different compound-point budget.}
\label{tab:comparison_I_byCP}
\begin{tabular}{@{} l
                S[scientific-notation=true,retain-zero-exponent]
                S[scientific-notation=true,retain-zero-exponent] @{}}
\toprule
\multicolumn{3}{c}{Compound points = 3} \\
\midrule
Method  & {Absolute Error} & {Relative Error} \\
\midrule
Ours             & \num{1.84729047702401e-3} & \num{4.35333153035048e-1} \\
\hline
Evans (d=0)      & \num{1.0422e-2}           & \num{1.9543} \\
Evans (d=1)      & \num{8.8859e-3}           & \num{1.0551} \\
Evans (d=2)      & \num{3.6709e-1}           & \num{8.7630} \\
\midrule
\multicolumn{3}{c}{Compound points = 50} \\
\midrule
Method  & {Absolute Error} & {Relative Error} \\
\midrule
Ours           & \num{1.20745616811590e-6} & \num{4.56999292605061e-4} \\
\hline
Evans (d=0)    & \num{1.0606e-4}           & \num{3.9356e-2} \\
Evans (d=1)    & \num{2.2914e-5}           & \num{8.6380e-3} \\
Evans (d=2)    & \num{2.5161e-6}           & \num{9.5219e-4} \\
\midrule
\multicolumn{3}{c}{Compound points = 100} \\
\midrule
Method  & {Absolute Error} & {Relative Error} \\
\midrule
Ours          & \num{2.91645044722417e-7} & \num{1.10412927605317e-4} \\
\hline
Evans (d=0)    & \num{1.6487e-7}           & \num{6.242e-5} \\
Evans (d=1)   & \num{2.8051e-6}           & \num{1.061e-3} \\
Evans (d=2)     & \num{1.6487e-7}           & \num{6.2421e-5} \\
\bottomrule
\end{tabular}
\end{table}

Tables~\ref{tab:comparison_Z_byCP} and \ref{tab:comparison_I_byCP} highlight the comparative performance of our method and the method proposed by Evans et al. Our method achieves better accuracy than Evans' method with order $d=0$, when using a low number of compound points (3 points). This difference is particularly notable in the relative error, where Evans' method demonstrates lower accuracy due to the limited subdivision of the integration interval. However, when the number of compound points increases to 100, Evans' method closes the gap and provides results comparable to ours, though our method remains slightly more efficient in most cases.

As the order of differentiation increases to $d = 1$, Evans' method begins to show significant improvements, particularly for larger numbers of compound points. With 100 points, their method achieves smaller absolute and relative errors than ours, indicating its ability to leverage higher-order derivatives effectively to refine the bounds of the integral. This trend continues for $d = 2$, where Evans' method significantly outperforms ours in both absolute and relative errors, particularly when the interval is finely subdivided. These results emphasize the strength of Evans' method in producing highly accurate bounds for higher-order approximations.

However, this improved accuracy comes at the cost of an increased computational complexity. Evans’ method relies on higher-order derivatives and the determination of inflection points within the integration interval. Finding these inflection points involves solving root-finding problems, often using algorithms such as bisection or binary search. The complexity of Evans’ method depends on two parameters: the number of compound points and the number of differentiations. In contrast, our method avoids high order derivative-based computations, offering an alternative approach to achieving accurate results. For this reason, we could not apply Evans’ method to $\mathcal{J}$, as differentiating the integrand twice was overly cumbersome.

\cblue{
\section{Generalisation to the multidimensional setting}
\label{sec_multidim}
In this section, we now explore the generalization of the proposed approach to the multi-dimensional case, with dimension $d \geq 1$. Let $f:\mathbb{R}^d\to\mathbb{R}$ and $\phi:\mathbb{R}^d\to\mathbb{R}$, $d\geq1$. We aim at approximating
\begin{equation}
\label{eq:integral_interest_multidim}
\mathcal{I} \equiv \int_{\mathbb{R}^d} f(\x)\,\pi(\x)\,d\x,
\quad \text{where} \quad (\forall \x \in \mathbb{R}^d) \quad
\pi(\x) = \exp\big(-\phi(\x)\big),
\end{equation}
assuming that the above integral is finite. 
\subsection{Construction of the minorant/majorant functions}
Let us first generalize the Assumptions \ref{assump1} and \ref{assump2}, as follows:
\begin{assumption}
\label{assump3}
For every $\t\in\mathbb{R}^d$, there exist two matrices $\A(\t),\B(\t)\in\mathbb{S}^d_{++}$ (i.e., symmetric positive definite in $\mathbb{R}^{d \times d}$) such that, for all $\x\in\mathbb{R}^d$,
\begin{align}
\label{eq:ass_multidim}
\phi(\x)&\le \phi(\t)+ \nabla \phi(\t)^\top (\x-\t)+\frac12 (\x-\t)^\top \A(\t)(\x-\t),\\
\phi(\x)&\ge \phi(\t)+\nabla \phi(\t)^\top (\x-\t)+\frac12 (\x-\t)^\top \B(\t)(\x-\t).
\end{align}
\end{assumption}
The Assumption \ref{assump3} is typically reached when $\phi$ is Lipschitz-smooth, and strongly convex. It then allows to build Gaussian approximations for $\pi$, on $\mathbb{R}^d$, as shown in the following Lemma, following similar derivations to those in Section \ref{sec_gaussbounds}. We use the notation $g(\cdot;\boldsymbol{\mu},\boldsymbol{\Sigma})$ to denote the density of the multi-dimensional normal distribution $\mathcal{N}(\boldsymbol{\mu},\boldsymbol{\Sigma})$ on $\mathbb{R}^d$.
\begin{lemma}
\label{lem:tangent_gaussians_multidim}
Assume that $\phi$ satisfies~Assumption~\ref{assump3}.
Let, for every $(\x,\t)\in(\mathbb{R}^d)^2$,
\begin{equation}
\label{eq:minorant_form_multidim}
\underline{b}(\x;\t)=\underline{C}(\t)\, g\!\big(\x;\underline{\u}(\t),\underline{\boldsymbol{\Sigma}}(\t)\big),
\end{equation}
with parameters
\begin{equation}
\label{eq:minorant_params_multidim}
\begin{cases}
\underline{\boldsymbol{\Sigma}}(\t)=\A(\t)^{-1},
\\
\underline{\u}(\t)=\t-\underline{\boldsymbol{\Sigma}}(\t)\nabla \phi(\t),
\\
\underline{C}(\t)=(2\pi)^{\frac d2}\,|\underline{\boldsymbol{\Sigma}}(\t)|^{\frac12}\,
\exp\!\left(-\phi(\t)+\frac12 \nabla\phi(\t)^\top \underline{\boldsymbol{\Sigma}}(\t)\nabla\phi(\t)\right),
\end{cases}
\end{equation}
and, for every $(\x,\t)\in(\mathbb{R}^d)^2$,
\begin{equation}
\label{eq:majorant_form_multidim}
\overline{b}(\x;\t)=\overline{C}(\t)\, g\!\big(\x;\overline{\u}(\t),\overline{\boldsymbol{\Sigma}}(\t)\big),
\end{equation}
with
\begin{equation}
\label{eq:majorant_params_multidim}
\begin{cases}
\overline{\boldsymbol{\Sigma}}(\t)=\B(\t)^{-1},
\\
\overline{\u}(\t)=\t-\overline{\boldsymbol{\Sigma}}(\t)\nabla \phi(\t),
\\
\overline{C}(\t)=(2\pi)^{\frac d2}\,|\overline{\boldsymbol{\Sigma}}(\t)|^{\frac12}\,
\exp\!\left(-\phi(\t)+\frac12 \nabla\phi(\t)^\top \overline{\boldsymbol{\Sigma}}(\t)\nabla\phi(\t)\right).
\end{cases}
\end{equation}
Then, for every $\t\in\mathbb{R}^d$, 
\begin{equation}
\label{eq:tgt_min_maj_multidim}
\forall \x\in\mathbb{R}^d,\qquad
\underline{b}(\x;\t)\le \pi(\x)\le \overline{b}(\x;\t),
\qquad
\underline{b}(\t;\t)=\pi(\t)=\overline{b}(\t;\t).
\end{equation}
\end{lemma}
\begin{proof}
The proof follows the same lines as Lemmas~\ref{lem:ass1} and~\ref{lem:ass2}. Starting from~the first inequality of Assumption~\ref{assump3}, we rewrite
\begin{multline}
\phi(\t)+\nabla \phi(\t)^\top (\x-\t)+\frac12 (\x-\t)^\top \A(\t)(\x-\t)
= \\
\phi(\t)-\frac12 \nabla\phi(\t)^\top \A(\t)^{-1}\nabla\phi(\t)
+\frac12 (\x-\underline{u}(\t))^\top \A(\t)(\x-\underline{u}(\t)),
\end{multline}
and then compose with exponential function to obtain~\eqref{eq:minorant_form_multidim}--\eqref{eq:minorant_params_multidim}. The expressions
\eqref{eq:majorant_form_multidim}--\eqref{eq:majorant_params_multidim} are obtained similarly using the second inequality in~Assumption~\ref{assump3}.
\end{proof}
\subsection{Construction of the envelopes and integral computations}
We now move to the construction of the upper and lower piecewise envelopes, aiming at generalizing the strategy from Section~\ref{sec_compound}. Consider the set $\T=[\t_1,\dots,\t_M]\in \mathbb{R}^{d \times M}$ concatenating the $d$-dimensional coordinates of $M \geq 2$ tangency points. Following Lemma~\ref{lem:tangent_gaussians_multidim}, for every $m$, we build Gaussian minorant $\underline{b}(\cdot;\t_m)$ and majorant $\overline{b}(\cdot;\t_m)$ approximations of $\pi$, tangent of $\pi$ at each $t_m$. The lower and upper envelopes of this set of $M$ approximations, are given by:
\begin{equation}
\label{eq:envelopes_multidim}
\underline{C}(\x;\T) = \max_{1\le m\le M}\,\underline{b}(\x;\t_m),
\qquad
\overline{C}(\x;\T) = \min_{1\le m\le M}\,\overline{b}(\x;\t_m).
\end{equation}
By construction, for all $\x\in\mathbb{R}^d$,
\begin{equation}
\label{eq:envelope_bounds_multidim}
\underline{C}(\x;\T)\le \pi(\x)\le \overline{C}(\x;\T).
\end{equation}
Functions $\underline{C}(\cdot;\T)$ and $\overline{C}(\cdot;\T)$ are continuous on $\mathbb{R}^d$ as maxima/minima of finitely many continuous functions, and they interpolate $\pi$ on $\T$, in the sense that
\begin{equation}
\label{eq:interpolation_multidim}
\forall \t \in \{\t_1,\ldots,\t_M\},\qquad
\underline{C}(\t;\T)=\pi(\t)=\overline{C}(\t;\T).
\end{equation}
Similarly as in Section~\ref{sec_compound}, the sign changes of function $f$ can be handled as follows. Define, for every $\x \in \mathbb{R}^d$, $f^+(\x)=\max\{f(\x),0\}$ and $f^-(\x)=\max\{-f(\x),0\}$ so that $f=f^+-f^-$ and $|f|=f^++f^-$. For any finite $\T\in\mathbb{R}^{d \times M}$, let us define
\begin{equation}
\label{eq:Id_bounds_multidim}
\underline{\mathcal{I}}(\T) = \int_{\mathbb{R}^d} f^+(\x)\,\underline{C}(\x;\T)\,d\x
-\int_{\mathbb{R}^d} f^-(\x)\,\underline{C}(\x;\T)\,d\x,
\end{equation}
and
\begin{equation}
\label{eq:Id_bar_bounds_multidim}
\overline{\mathcal{I}}(\T) = \int_{\mathbb{R}^d} f^+(\x)\,\overline{C}(\x;\T)\,d\x
-\int_{\mathbb{R}^d} f^-(\x)\,\overline{C}(\x;\T)\,d\x.
\end{equation}
Then, \eqref{eq:envelope_bounds_multidim} implies
\begin{equation}
\label{eq:bracketing_multidim}
\underline{\mathcal{I}}(\T)\le \mathcal{I} \le \overline{\mathcal{I}}(\T),
\end{equation}
whenever the above integrals are finite.

When $d>1$, the resolution of the optimization problems in \eqref{eq:envelopes_multidim} is challenging. The recursive Algorithm \ref{algo_compund} based on Davenport-Schinzel sequences has been generalized in two-dimensions in~\cite{Schartz1990}, however its extension to higher dimension remains an open problem. We propose instead a novel approach, better exploiting the nice geometry of the problem. As explained hereafter, the proposed method requires an additional assumption of separability on the set of majorant/minorant functions, yielding a tractable and fast implementation of the piecewise envelope constructions.


\begin{assumption}
\label{ass:homosce}
Assumption~\ref{assump3} holds. Moreover, for every $\t \in \mathbb{R}^d$, 
\begin{align}
\label{eq:diag_BN_const}
\A(\t) &= \mathrm{Diag}(\boldsymbol{\beta}),\\
\B(\t) & = \mathrm{Diag}(\boldsymbol{\eta}),
\end{align}
where $\boldsymbol{\beta} = (\beta_{i})_{1 \leq i \leq d}$ and $\boldsymbol{\eta} = (\eta_{i})_{1 \leq i \leq d}$ are positive vectors independant from $\t$. 
\end{assumption}
Under Assumption \ref{ass:homosce}, the Gaussian tangent bounds $\underline{b}(\cdot;\t)$ and $\overline{b}(\cdot;\t)$ factorize coordinate-wise, that is, for every $\x \in \mathbb{R}^d$,
\begin{equation}
\label{eq:bd_factorized_const}
\underline{b}(\x;\t) = \underline{C}(\t) \prod_{i=1}^d g(x_i; \underline{u}_{i}(\t), \beta_{i}^{-1}), \quad 
\overline{b}(\x;\t) = \overline{C}(\t) \prod_{i=1}^d g(x_i; \overline{u}_{i}(\t), \eta_{i}^{-1}).
\end{equation}
Let us emphasize that we do not assume here any homoscedastic/separability on the function $\pi$. Moreover, Assumption \ref{ass:homosce} holds trivially when the majorant and minorant functions are constructed from the calculation of the Lipschitz smoothness, and strong convexity, constants of $\phi$. The primary advantage of the constant curvature considered in Assumption \ref{ass:homosce} lies in the linearization of the problems \eqref{eq:envelopes_multidim} by passing to the log-space. Indeed, for the maximization problem (the reasoning being similar for the minimization one), we now have, for all $\x \in \mathbb{R}^d$, for all $\T \in \mathbb{R}^{d \times M}$,
\begin{equation}
\log \underline{C}(\x;\T) = -\frac{1}{2} \sum_{i=1}^d \beta_{i} x_i^2 + \max_{1 \le m \le M} \left(\boldsymbol{\omega}_m^\top \x + \alpha_m \right),
\label{eq:logmax}
\end{equation}
with, for every $m \in \{1,\ldots,M\}$, 
\begin{align}
\label{eq:affine}
\boldsymbol{\omega}_m & = \mathrm{Diag}(\boldsymbol{\beta}) \underline{\u}(\t_m)\\
\alpha_m & = \log \underline{C}(\t_m) - \frac{1}{2} \underline{\u}(\t_m)^\top \mathrm{Diag}(\boldsymbol{\beta}) \underline{\u}(\t_m) \nonumber\\
& - \frac{d}{2}\log(2\pi) + \frac{1}{2}\log|\mathrm{Diag}(\boldsymbol{\beta})|. \nonumber
\end{align}
Consequently, solving the maximization problem in \eqref{eq:logmax} now reduces to the comparison of $M$ affine functions with weight and offset terms $(\boldsymbol{\omega}_m,\alpha_m)_{1 \leq m \leq M}$. Geometrically, comparing affine functions results in building a partition of $\mathbb{R}^d$ into at most $M$ convex polyhedral cells $\{\underline{P}_1,\dots,\underline{P}_M\}$, each associated to a unique affine term \cite{deBerg2008,Ziegler1995}. Namely, for each $m\in\{1,\dots,M\}$, the associated dominant cell is defined by
\begin{equation}
\label{eq:power_cell}
\underline{P}_m
=
\left\{
\x\in \mathbb{R}^d
\;\middle|\;
\forall k\neq m,\;
\boldsymbol{\omega}_m^\top \x+\alpha_m
\ge
\boldsymbol{\omega}_k^\top \x+\alpha_k
\right\}.
\end{equation}
Each cell is obtained by intersecting $K$ with $M-1$ half-spaces, so that every $\underline{P}_m$ is a convex polyhedron (or a convex polytope if the computation is restricted to a bounded subset of $\mathbb{R}^d$). Some cells may be empty, so the partition consists of at most $M$ nonempty polyhedral regions. The resulting partition defines a so-called power diagram (also called weighted Voronoi diagram). Generalizing the standard Voronoi setting, the boundaries between adjacent cells of power diagrams are affine hyperplanes shifted by offset values $(\alpha_m)_{1 \leq m \leq M}$. The computation of a power diagram in dimension $d$ can be run efficiently, as it reduces to the construction of intersections of half-spaces, for instance via convex hull or polyhedral algorithms. In dimension $d=2$, this yields a decomposition of the plane into convex polygons. 

The integral of the obtained piecewise envelopes over any subset $K \subset \mathbb{R}^d$ then reduces to a sum of integrals of separable Gaussian functions over each corresponding cells. For instance, we have
\begin{equation}
\int_K f^+(\x)\,\underline{C}(\x;\T)\,d\x
=
\sum_{m=1}^M
\int_{\underline{P}_m} f^+(\x)\,\underline{b}(\x;\t_m)\,d\x.
\end{equation}
Similar formula hold for the other terms in \eqref{eq:Id_bounds_multidim} and \eqref{eq:Id_bar_bounds_multidim}, by introducing the minimization polyhedral cells $\{\bar{P}_1,\ldots,\bar{P}_M\}$. Efficient numerical quadrature (for instance, triangular quadrature) can be used to evaluate each term in a very accurate manner. To summarize, starting from $\T$ tangent points, we first solve the maximization/minimization problems \eqref{eq:envelopes_multidim} by converting them into a geometric partitioning starting from the affine parameters \eqref{eq:affine}, then we compute each resulting local integrations via quadrature over convex polyedral domains, and gather the results to obtain the integral bounds. To complete the practical implementation of the iterative bound construction, there remains to define how to refine the set of tangency points, to sequentially improve the bound accuracy and guarantee convergence. The complete convergence proof in multi-dimensional setting is omitted due to space constraints. The key ingredient to preserve the validity of the convergence results from the 1D setting, is to satisfy a generalized version of the compact refinement property introduced in Section~\ref{sec_convergence_analysis}. In our experiments in 2D case presented hereafter, we choose to add tangency points by fusing the minimum and maximum power diagrams, and computing the projection onto a pre-defined dyadic grid, of the vertices coordinates of the polygon leading to the largest integral gap. This leads by construction to a monotonic decrease of the approximation gap and can be implemented at low computational cost.

\subsection{Numerical Results}

Let us now present numerical results for the generalization of our approach in a multidimensional setting. We here consider the logistic regression example from Section~\ref{sec_logistic}, extended to the case $d=2$. We set
\begin{equation}
(\forall \x \in \mathbb{R}^2) \quad p(\x) = A \exp(-x_1^2/(2 s_1^2)) \exp(-x_2^2/(2 s_2^2)) \times \prod_{j=1}^J \exp(-\log(1 + \exp(y_j (\w_j^\top \x)))),
\end{equation}
with $A>0$, $y_j \in \{-1,1\}$, $\w_j \in \mathbb{R}^2$, and
\begin{equation}
(\forall \x \in \mathbb{R}^2) \quad f(\x) = x_1^2,
\end{equation}
so that $f^+ = f$ in that case. We set $s_1 = s_2 = 1.2$, $J=10$. The proposal distribution $q$ is an isotropic Gaussian with zero-mean and standard deviation $\theta = 2$ in both axis. The contour plots of $p$, $f p$, and $f^2 p^2/q$ are displayed in Fig.~\ref{fig:target2D}. We aim at integrating these three functions over $\mathbb{R}^2$.  

\begin{figure}[h!]
    \centering
    \begin{tabular}{ccc}
    \includegraphics[width=0.3\linewidth]{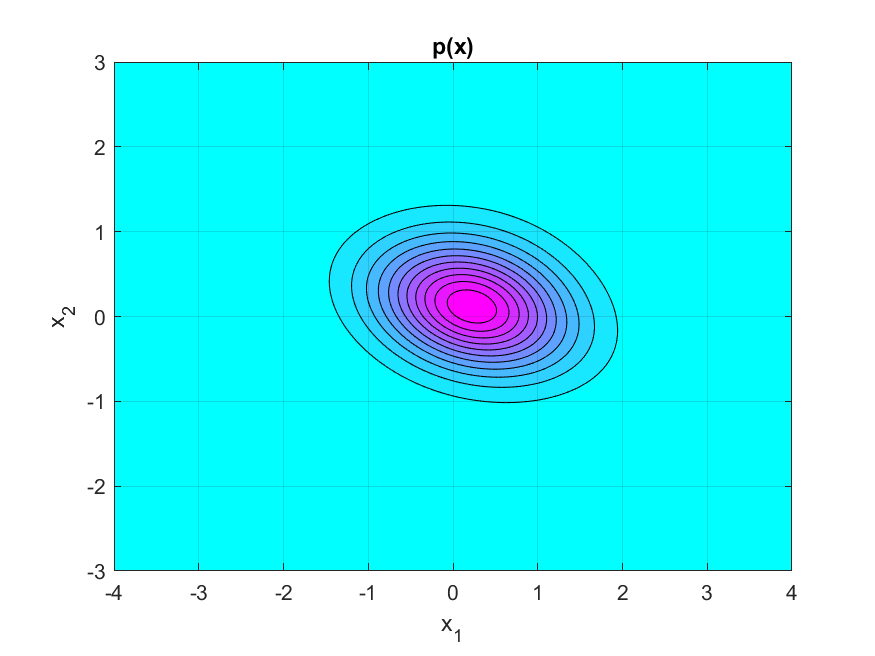}  &
    \includegraphics[width=0.3\linewidth]{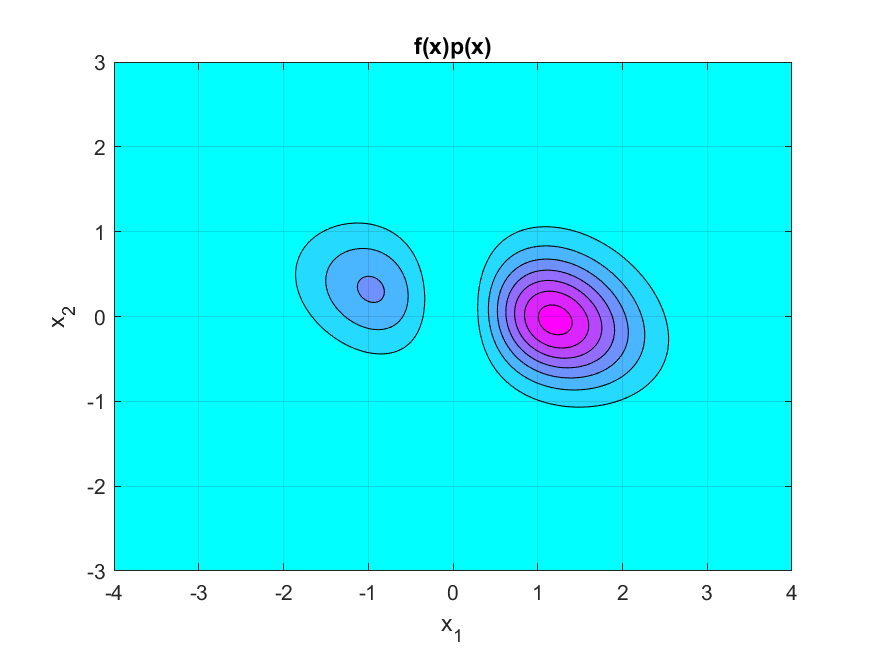} & \includegraphics[width=0.3\linewidth]{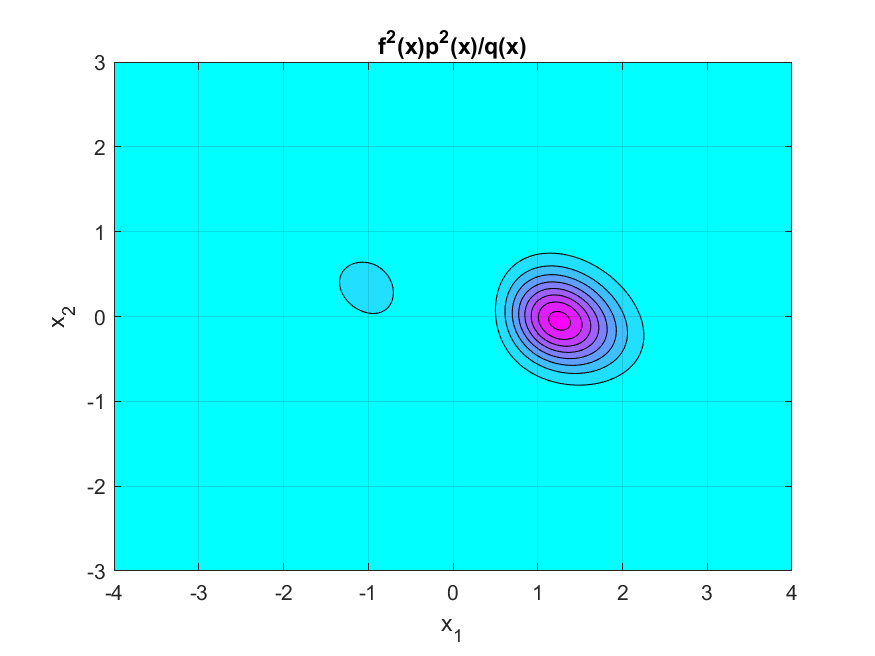}
    \end{tabular}
    \caption{Display of the integrand contour plots, for the 2D example with $f(x) = x_1^2$.}
    \label{fig:target2D}
\end{figure}
Assumptions \ref{assump3} and \ref{ass:homosce} hold with
\begin{equation}
\boldsymbol{\beta} = \|\mathrm{Diag}(\y) \W\|^2 \left[\begin{array}{c}1/4 \\1/4\end{array}\right] + \left[\begin{array}{c}1/s_1^2 \\ 1/s_2^2\end{array}\right],
\end{equation}                                                 
with $\y = (y_j)_{1 \leq j \leq J} \in \{-1,1\}^J$, $\W = \left[\w_1^\top, \ldots, \w_J^\top \right] \in \mathbb{R}^{J \times 2}$, and
\begin{equation}
\boldsymbol{\eta} = \left[\begin{array}{c}1/s_1^2 \\ 1/s_2^2\end{array}\right].
\end{equation} 
Similarly to in the 1D case (see Appendix B), we must check the following range conditions to ensure the validity of Gaussian bounds on the ratio $p^2/q$,
\begin{equation}
\begin{cases}
\beta_1^{-1} \in (0,\frac{\theta}{2}),\quad
\beta_2^{-1} \in (0,\frac{\theta}{2}),\\
\eta_1^{-1} \in (0,\frac{\theta}{2}),\quad
\eta_2^{-1} \in (0,\frac{\theta}{2}),
\end{cases}
\end{equation}
which are satisfied empirically in our practical example. We provide in Figure~\ref{fig:voronoif}, the evolution of the sets $\{\bar{P}_m\}$ and $\{\underline{P}_m\}$, along refinement iterations, where the next refined area is depicted in red color, for the integration of $p$. We also display in Figure~\ref{fig:Z_algo2_2D} the evolution of the integral approximation gap along iterations, showing the fast convergence of the algorithm. Similarly as in 1D case, we compute the bounds $(\underline{V},\overline{V})$ on the variance of the IS approach, using $20$ iterations (i.e., refinement steps) for our approach, and $N=200$ IS samples. The results are presented in Fig.~    \ref{fig:varUIS_2D}, showing that the obtained bounds offer a fast and accurate manner to decide a good setting of the proposal parameter $\theta$. 
 
\begin{figure}[h!]
    \centering
    \begin{tabular}{ccc}
    \includegraphics[width=0.3\linewidth]{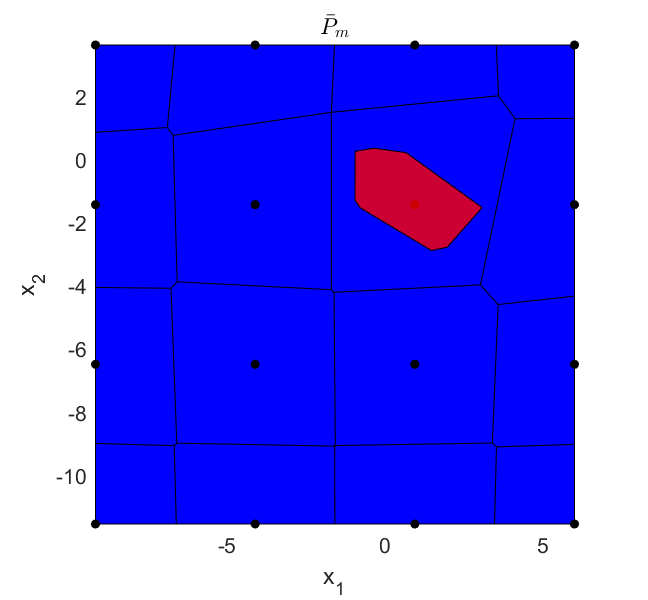}  &
    \includegraphics[width=0.3\linewidth]{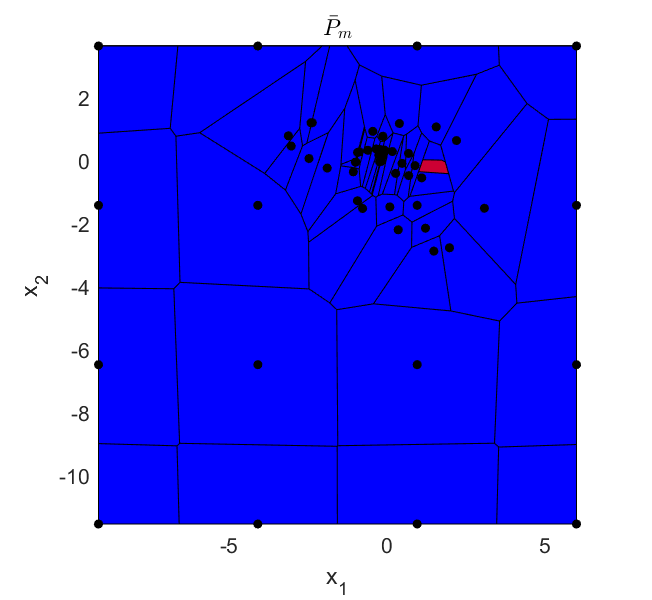} &
    \includegraphics[width=0.3\linewidth]{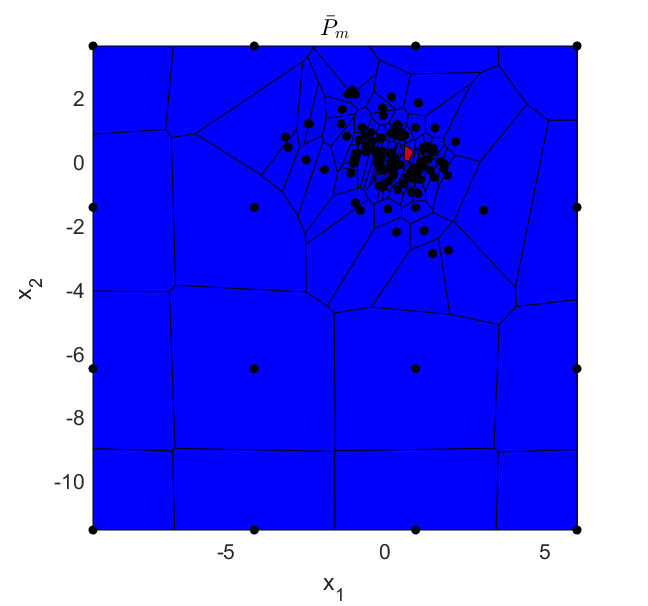}\\
        \includegraphics[width=0.3\linewidth]{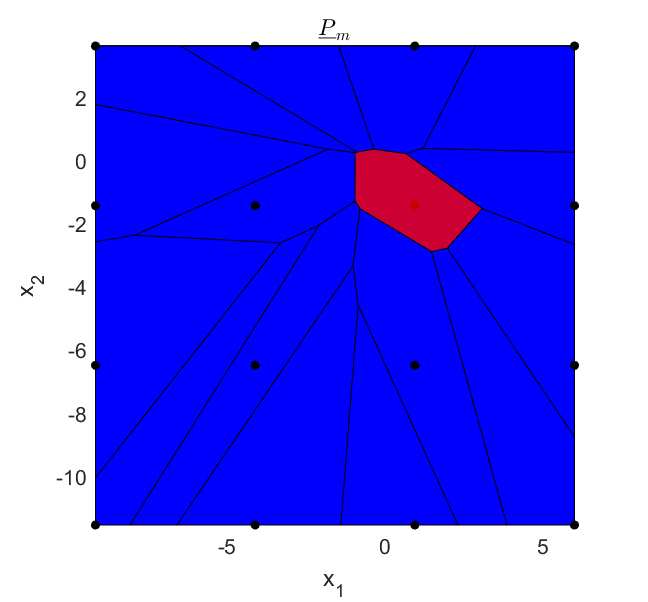}  &
    \includegraphics[width=0.3\linewidth]{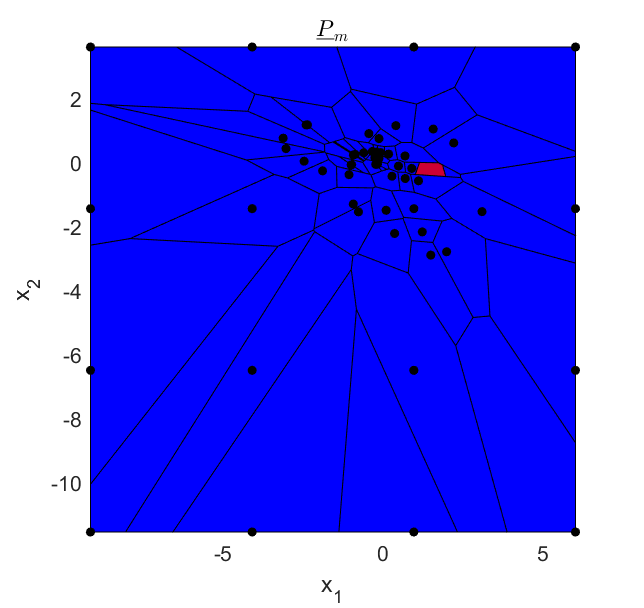} &
    \includegraphics[width=0.3\linewidth]{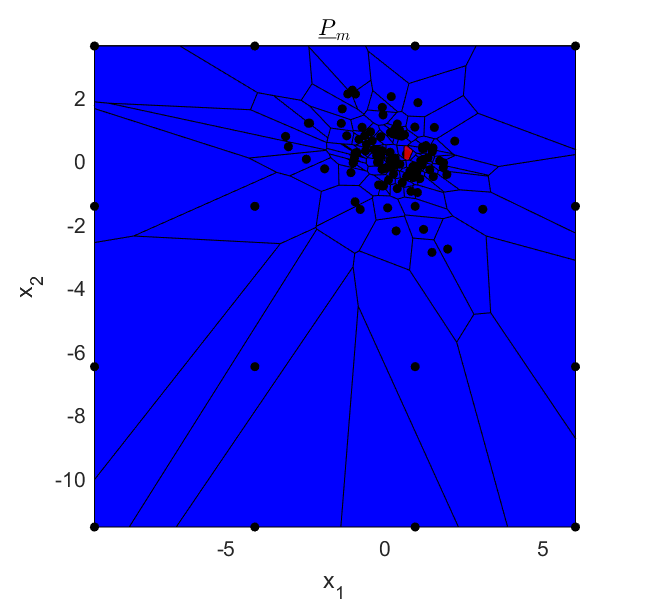}\\
    $n=1, m=16, G^{(n)} = 5.2 \times 10^{-4}$ & $n=5, m=64, G^{(n)} = 2.7 \times 10^{-5}$ & $n=10, m=143, G^{(n)} = 1.2 \times 10^{-5}$
    \end{tabular}
    \caption{\cblue{2D example: Evolution of the convex polyhedra $\{\bar{P}_m\}$ and $\{\underline{P}_m\}$, along refinement iterations, for the computation of the bounds \eqref{eq:bndZ}. In red color, we highlight the polyhedra used for the next refinement step. Black dots represent the tangency points. We display the cardinality $m$ as well as the integral approximation gap.}}
    \label{fig:voronoif}
\end{figure}

\begin{figure}[H]
\centering
\begin{tabular}{@{}c@{}c@{}c@{}}
\includegraphics[height=3.8cm]{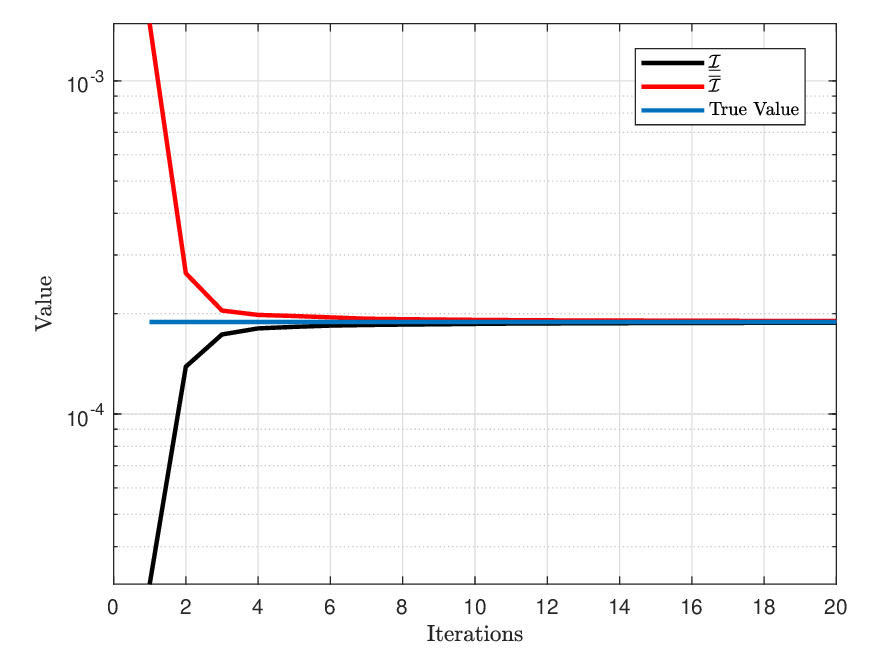} & 
\includegraphics[height=4cm]{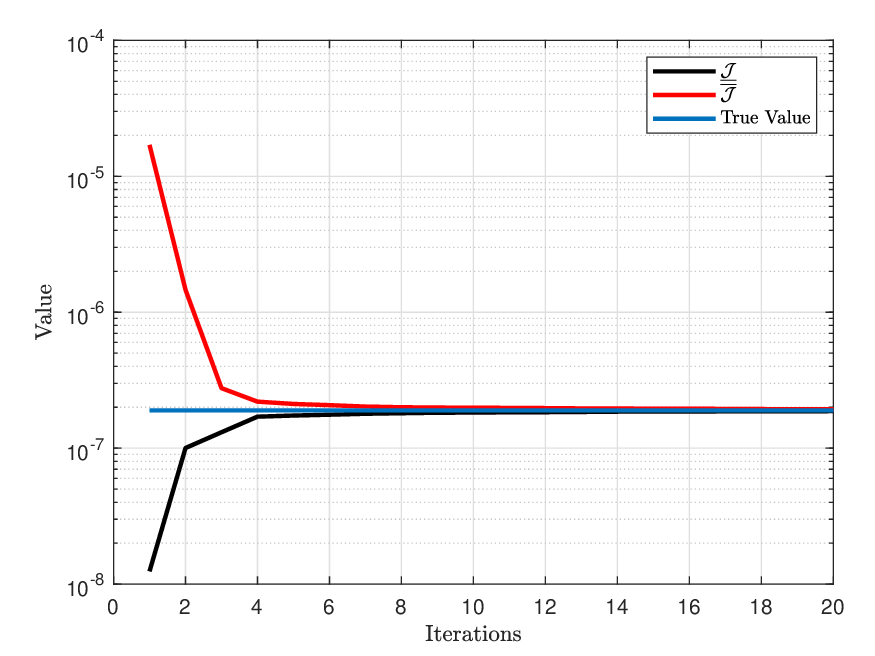}
& 
\includegraphics[height=4cm]{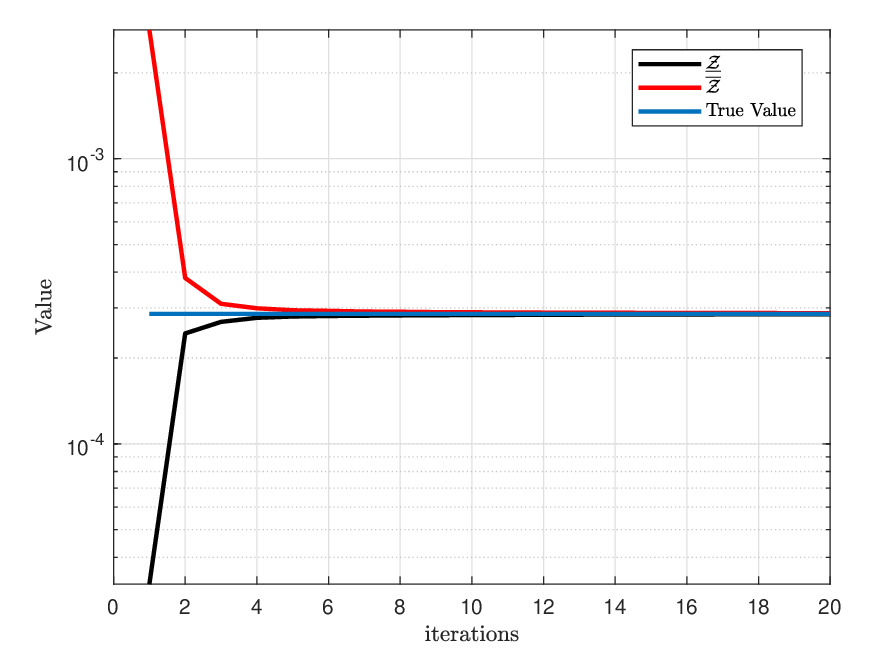} 
\end{tabular}
\caption{\cblue{Evolution of the integral bounds \eqref{eq:bndI1}, \eqref{eq:bndI2}, \eqref{eq:bndZ} along iterations $n$ of the proposed method generalized to the 2D case.}}
\label{fig:Z_algo2_2D}
\end{figure}

\begin{figure}
    \centering    \includegraphics[width=0.7\linewidth]{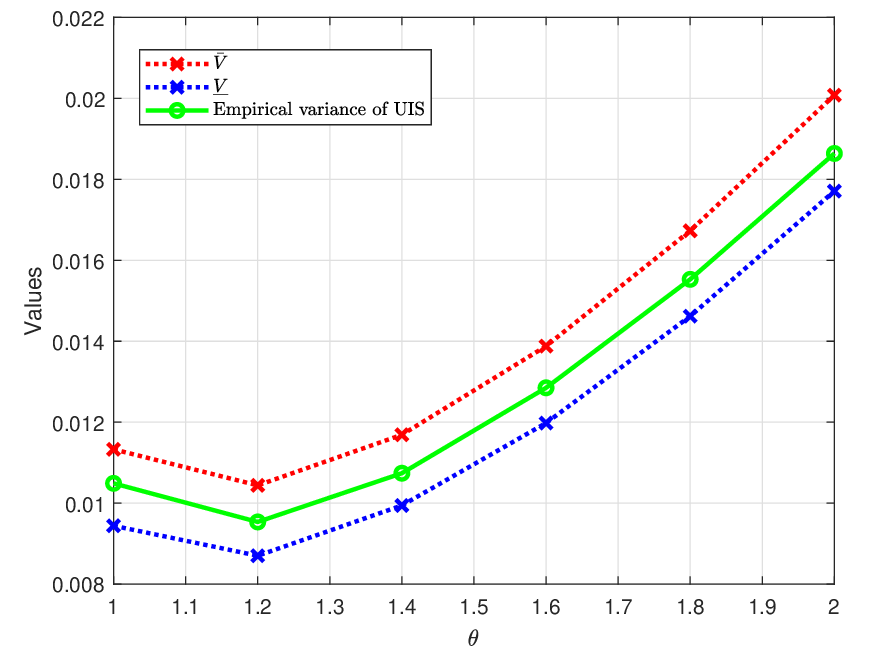}
    \caption{\cblue{Proposed bounds $(\underline{V},\overline{V})$, and empirical variance of UIS for $N_{\text{runs}} = 10^4$, computed for different values of the proposal parameter $\theta$ in the 2D example.}}
    \label{fig:varUIS_2D}
\end{figure}
}
\section{Conclusion}
\label{sec_conclusion}
In this article, we introduced a new algorithm for computing accurate bounds on the moments of one-dimensional distributions, \cblue{and its generalization to the multi-dimensional case}. An important application is the estimation of the variance of an importance sampling estimator, allowing a facilitated tuning of proposal hyper-parameters. Our method achieves excellent precision, matching the performance of the competing approach from Evans \emph{et al.} on a unidimensional example. \cblue{An efficient implementation for coping with higher-dimension problems is proposed, based on power diagrams.}
Furthermore, we provided theoretical guarantees of the iterative computation of the bounds, and a quality control on their values. There are several avenues for future improvements. One potential direction is to explore other proposal families in the numerical application, such as Student's t     distributions, \cblue{which will require to extend the calculations of Appendix~B}. \cblue{We also plan to explore local majorant/minorant constructions \cite{ChouzenouxLocal}, to handle non log-concave and possibly multimodal targets.} 

\section*{Appendix A. Closed-form expressions for Gaussian and truncated-Gaussian monomial moments}
\label{app:gaussian_moments}
In this appendix, we provide explicit formulas for the computation of monomial moments with respect to Gaussian and truncated Gaussian distributions, which will be essential tools to practically implement the proposed approach. 

\subsection*{A.1. Moments of the Gaussian distribution}

Let $X \sim \mathcal{N}(\mu, \sigma^2)$ and $k \in \mathbb{N}$. The $k$-th moment of $X$ can be computed explicitly:

\begin{itemize}
    \item If $\mu = 0$, then odd moments vanish and the even moments are given by
    $$
    \mathbb{E}[X^{2k}] = \sigma^{2k} (2k - 1)!! = \sigma^{2k} \cdot \frac{(2k)!}{2^k k!}.
    $$
    \item For general $\mu \in \mathbb{R}$, the $k$-th moment is
    $$
    \mathbb{E}[X^k] = \sum_{j = 0}^{\lfloor k/2 \rfloor} \binom{k}{2j} \mu^{k - 2j} \sigma^{2j} (2j - 1)!!,
    $$
    see e.g.~\cite{jorge2018}.
\end{itemize}

\subsection*{A.2. Moments of the truncated Gaussian distribution}

Let $X \sim \mathcal{N}(\mu, \sigma^2)$ truncated to the interval $[a, b]$. The $k$-th conditional moment is defined as
$$
m_k := \mathbb{E}[X^k \mid a \leq X \leq b].
$$
These moments satisfy the recursive formula~\cite{Orjebin2014}:
$$
m_k = (k-1)\sigma^2 m_{k-2} + \mu m_{k-1} - \sigma \cdot \frac{b^{k-1} \phi\left(\frac{b - \mu}{\sigma}\right) - a^{k-1} \phi\left(\frac{a - \mu}{\sigma}\right)}{\Phi\left(\frac{b - \mu}{\sigma}\right) - \Phi\left(\frac{a - \mu}{\sigma}\right)},
$$
for $k \geq 1$, with base cases $m_{-1} = 0$, $m_0 = 1$.

Hereabove, we denoted $(\forall x \in \mathbb{R}) \;\phi(x) = \frac{1}{\sqrt{2\pi}} e^{-x^2/2}$, the standard normal probability distribution function, and $(\forall x \in \mathbb{R}) \; \Phi(x) = \int_{-\infty}^x \phi(t)\,dt$, the standard normal cumulative distribution function.

In particular, we have~(\cite{Orjebin2014,jorge2018}):

\begin{align*}
m_1 &= \mu - \sigma \cdot \frac{\phi(\beta) - \phi(\alpha)}{\Phi(\beta) - \Phi(\alpha)}, \\
m_2 &= \mu^2 + \sigma^2 - \sigma \cdot \frac{(\mu + b)\phi(\beta) - (\mu + a)\phi(\alpha)}{\Phi(\beta) - \Phi(\alpha)},
\end{align*}

where $\alpha = \frac{a - \mu}{\sigma}$ and $\beta = \frac{b - \mu}{\sigma}$. When $\alpha$ and $\beta$ are both large (in absolute value), it is recommended to use numerically stabilized variants involving the scaled complementary error function \texttt{erfcx}, as detailed in~\cite{jorge2018,chevillard2012}.

\cblue{
\section*{Appendix B. Computations of Gaussian majorant/minorant for $p(x)^2/q(x)$ from Sec.~\ref{sec_logistic}}
\label{app:bounds_p^2/q}

Let, for all $x \in \mathbb{R}$, $p(x) = \exp(-\phi(x))$, with $\phi$ satisfying Assumptions \ref{assump1} and \ref{assump2}. Let, for all $x \in \mathbb{R}$, $q(x)=g(x;\mu,\theta)
=
\frac{1}{\sqrt{2\pi\theta^2}}
\exp\left(-\frac{(x-\mu)^2}{2\theta^2}\right),
$
a Gaussian proposal distribution, with mean $\mu \in \mathbb{R}$, and standard deviation $\theta>0$. According to Lemmas~\ref{lem:ass1} and \ref{lem:ass2}, 
for every $(x,t)\in\mathbb{R}^2$,
\begin{equation}
\label{bounds_over_p}
\underline{C}(t)\, g(x;\underline{u}(t),\underline{\sigma}(t)) \leq p(x) 
\leq \overline{C}(t)\, g(x;\overline{u}(t),\overline{\sigma}(t))
\end{equation}
with
\begin{equation}
\begin{cases}
\underline{\sigma}(t)=1/\sqrt{\beta(t)},\\
\underline{u}(t)=t-\underline{\sigma}(t)^2\dot{\phi}(t),\\
\underline{C}(t)=\sqrt{2\pi \underline{\sigma}(t)^2}
\exp\left(-\phi(t)+\frac{\underline{\sigma}(t)^2}{2}(\dot{\phi}(t))^2\right),\\
\overline{\sigma}(t)=1/\sqrt{\nu(t)},\\
\overline{u}(t)=t-\overline{\sigma}(t)^2\dot{\phi}(t),\\
\overline{C}(t)=\sqrt{2\pi \overline{\sigma}(t)^2}
\exp\left(-\phi(t)+\frac{\overline{\sigma}(t)^2}{2}(\dot{\phi}(t))^2\right).
\end{cases}
\end{equation}
Let us show that, under suitable assumption on the proposal standard deviation $\theta$, we can also derive Gaussian bounds for the ratio
$
x \to \frac{p(x)^2}{q(x)}.
$

\begin{lemma}
\label{lem:bounds_p2_over_q}
Assume that, for all $t \in \mathbb{R}$, $(\underline{\sigma}(t)^2,\overline{\sigma}(t)^2)\in (0,2\theta^2)$, then, for every $(x,t) \in \mathbb{R}^2$,
\begin{equation}
\underline{D}(t)\, g(x;\underline{v}(t),\underline{\rho}(t)) \leq
\frac{p(x)^2}{q(x)}\leq \overline{D}(t)\, g(x;\overline{v}(t),\overline{\rho}(t)),
\label{eq:boundp2q}
\end{equation}
where
\begin{equation}
\begin{cases}
\underline{\rho}(t)^2
=
\left(\dfrac{2}{\underline{\sigma}(t)^2}-\dfrac{1}{\theta^2}\right)^{-1},\\[2ex]
\underline{v}(t)
=
\underline{\rho}(t)^2
\left(
\dfrac{2\underline{u}(t)}{\underline{\sigma}(t)^2}
-\dfrac{\mu}{\theta^2}
\right),\\[2ex]
\underline{D}(t)
=
\underline{C}(t)^2
\dfrac{\underline{\rho}(t)}{\underline{\sigma}(t)^2}\theta
\exp\left(
-\dfrac{\underline{u}(t)^2}{\underline{\sigma}(t)^2}
+\dfrac{\mu^2}{2\theta^2}
+\dfrac{\underline{v}(t)^2}{2\underline{\rho}(t)^2}
\right),\\
\overline{\rho}(t)^2
=
\left(\dfrac{2}{\overline{\sigma}(t)^2}-\dfrac{1}{\theta^2}\right)^{-1},\\[2ex]
\overline{v}(t)
=
\overline{\rho}(t)^2
\left(
\dfrac{2\overline{u}(t)}{\overline{\sigma}(t)^2}
-\dfrac{\mu}{\theta^2}
\right),\\[2ex]
\overline{D}(t)
=
\overline{C}(t)^2
\dfrac{\overline{\rho}(t)}{\overline{\sigma}(t)^2}\theta
\exp\left(
-\dfrac{\overline{u}(t)^2}{\overline{\sigma}(t)^2}
+\dfrac{\mu^2}{2\theta^2}
+\dfrac{\overline{v}(t)^2}{2\overline{\rho}(t)^2}
\right).
\end{cases}
\end{equation}
\end{lemma}

\begin{proof}
We have
$$
(\forall x \in \mathbb{R}) \quad q(x)=\frac{1}{\sqrt{2\pi \theta^2}}
\exp\left(-\frac{(x-\mu)^2}{2\theta^2}\right),
$$
hence
\begin{equation}
(\forall x \in \mathbb{R}) \quad
\frac{p(x)^2}{q(x)}
=
\exp(-2\phi(x))\sqrt{2\pi \theta^2}
\exp\left(\frac{(x-\mu)^2}{2\theta^2}\right).
\end{equation}
Using the left inequality in~\eqref{bounds_over_p}, and the above expression of $q$ leads, for every $t \in \mathbb{R}$, to
\begin{align}
(\forall x \in \mathbb{R}) \quad \frac{p(x)^2}{q(x)}
&\geq
\sqrt{2\pi \theta^2}
\exp\left(
-\frac{1}{\underline{\sigma}(t)^2}(x-\underline{u}(t))^2
+\frac{(x-\mu)^2}{2\theta^2}
-2\phi(t)
+\underline{\sigma}(t)^2(\dot{\phi}(t))^2
\right).
\end{align}
Expanding the quadratic terms gives, for every $(x,t) \in \mathbb{R}^2$,
\begin{multline}
-\frac{1}{\underline{\sigma}(t)^2}(x-\underline{u}(t))^2
+\frac{(x-\mu)^2}{2\theta^2}
=
-\left(
\frac{1}{\underline{\sigma}(t)^2}
-\frac{1}{2\theta^2}
\right)x^2
+
\left(
\frac{2\underline{u}(t)}{\underline{\sigma}(t)^2}
-\frac{\mu}{\theta^2}
\right)x  
-\frac{\underline{u}(t)^2}{\underline{\sigma}(t)^2}
+\frac{\mu^2}{2\theta^2}.
\label{eq_170}
\end{multline}
Since we assumed that, for every $t \in \mathbb{R}$, $\underline{\sigma}(t)^2<2\theta^2$, then
$$
\frac{1}{\underline{\sigma}(t)^2}-\frac{1}{2\theta^2}>0.
$$
Therefore, by completing the square, we can rewrite \eqref{eq_170} for every $(x,t) \in \mathbb{R}^2$ as
\begin{equation}
-\frac{1}{\underline{\sigma}(t)^2}(x-\underline{u}(t))^2
+\frac{(x-\mu)^2}{2\theta^2}
=
-\frac{1}{2\underline{\rho}(t)^2}(x-\underline{v}(t))^2
-\frac{\underline{u}(t)^2}{\underline{\sigma}(t)^2}
+\frac{\mu^2}{2\theta^2}
+\frac{\underline{v}(t)^2}{2\underline{\rho}(t)^2},
\end{equation}
with the notation
\begin{equation}
(\forall t \in \mathbb{R}) \quad \underline{\rho}(t)^2
=
\left(\frac{2}{\underline{\sigma}(t)^2}-\frac{1}{\theta^2}\right)^{-1}
\end{equation}
and
\begin{equation}
(\forall t \in \mathbb{R}) \quad \underline{v}(t)
=
\underline{\rho}(t)^2
\left(
\frac{2\underline{u}(t)}{\underline{\sigma}(t)^2}
-\frac{\mu}{\theta^2}
\right).
\end{equation}
Thus, for every $(x,t) \in \mathbb{R}^2$,
\begin{align}
\frac{p(x)^2}{q(x)}
&\geq
\sqrt{2\pi \theta^2}
\exp\left(
-2\phi(t)
+\underline{\sigma}(t)^2(\dot{\phi}(t))^2
-\frac{\underline{u}(t)^2}{\underline{\sigma}(t)^2}
+\frac{\mu^2}{2\theta^2}
+\frac{\underline{v}(t)^2}{2\underline{\rho}(t)^2}
\right)
\exp\left(
-\frac{(x-\underline{v}(t))^2}{2\underline{\rho}(t)^2}
\right).
\end{align}
    Using the expression of $g(x;\underline{v}(t),\underline{\rho}(t))$ proves the left inequality in \eqref{eq:boundp2q}. The right inequality in \eqref{eq:boundp2q} is proved in a similar manner, using the right inequality in \eqref{bounds_over_p} and the assumption on the range of $\overline{\sigma}(t)$.

\end{proof}

}

\cblue{
\section*{Appendix C. Technical lemma for Sec.~\ref{sec_convergence_analysis}}
\label{app:lemmas}
\begin{lemma}[Pointwise extrema preserve a common Lipschitz constant]
\label{lemm:max_min_lip_function}

Let $(f_i)_{i\in I}$ be a family of real-valued functions on a set $E\subset \mathbb{R}^d$, where $I$ is an arbitrary index set. Assume that there exists $L\geq 0$ such that, for every $i\in I$, the function $f_i$ is $L$-Lipschitz on $E$. Then the pointwise supremum and infimum,
$$
f_{\sup}(x):=\sup_{i\in I} f_i(x),
\qquad
f_{\inf}(x):=\inf_{i\in I} f_i(x),
$$
are also $L$-Lipschitz on $E$, provided they are finite-valued.
\end{lemma}

\begin{proof}
We first prove the result for $f_{\max}$. Let $x,y\in\mathbb{R}^n$. For every $i\in I$, since $f_i$ is $L$-Lipschitz, we have
\begin{equation*}
f_i(x)\leq f_i(y)+L\|x-y\|.
\end{equation*}
Taking the supremum over $i\in I$, we obtain
\begin{equation*}
f_{\max}(x)
=\sup_{i\in I} f_i(x)
\leq \sup_{i\in I}\bigl(f_i(y)+L\|x-y\|\bigr)
= \sup_{i\in I} f_i(y)+L\|x-y\|
= f_{\max}(y)+L\|x-y\|.
\end{equation*}
Exchanging the roles of $x$ and $y$ gives
\begin{equation*}
f_{\max}(y)\leq f_{\max}(x)+L\|x-y\|.
\end{equation*}
Hence,
\begin{equation*}
|f_{\max}(x)-f_{\max}(y)|\leq L\|x-y\|,
\end{equation*}
so $f_{\max}$ is $L$-Lipschitz.

We now prove the result for $f_{\min}$. For every $i\in I$, we also have
\begin{equation*}
f_i(x)\leq f_i(y)+L\|x-y\|.
\end{equation*}
Taking the infimum over $i\in I$ yields
\begin{equation*}
f_{\min}(x)
=\inf_{i\in I} f_i(x)
\leq \inf_{i\in I}\bigl(f_i(y)+L\|x-y\|\bigr)
= \inf_{i\in I} f_i(y)+L\|x-y\|
= f_{\min}(y)+L\|x-y\|.
\end{equation*}
Permutting the roles of $x$ and $y$, we get
\begin{equation*}
f_{\min}(y)\leq f_{\min}(x)+L\|x-y\|.
\end{equation*}
Therefore,
\begin{equation*}
|f_{\min}(x)-f_{\min}(y)|\leq L\|x-y\|,
\end{equation*}
which concludes the proof.
\end{proof}
}

\section*{Acknowledgments}

S.M. and E.C. acknowledge support from the European Research Council
under Starting Grant MAJORIS ERC-2019-STG-850925.

\bibliography{biblio}

\end{document}